\def\calT{\EuScript{T}} 
\newcommand{\C}{\mathbb{C}} 
\newcommand{\Z}{\mathbb{Z}} 
\newcommand{\cp}{\C P} 
\newcommand\R{\mathbb{R}} 
\newcommand{\Etilde}{\tilde{E}} 
\newcommand{\calO}{\mathcal{O}} 
\renewcommand{\O}{{\rm O}} 
\newcommand{\Mhat}{M\llap{$\widehat{\phantom{N}}$}} 
\newcommand{\Wtilde}{\widetilde{W}} 
\renewcommand{\H}{\mathcal{H}} 
\newcommand{\E}{\mathcal{E}} 
\newcommand{\D}{\Delta} 
\newcommand{\frakX}{\mathfrak{X}} 
\newcommand{\Y}{\mathfrak{Y}} 
\newcommand{\calI}{\mathcal{I}} 
\def\CAT#1{CAT(#1)}
\def\x{\chi} 
\newcommand\CATx{\CAT{$\x$}}
\newcommand{\calC}{\mathcal{C}} 
\newcommand{\calD}{\mathcal{D}} 
\renewcommand{\epsilon}{\varepsilon}
\def\xtilde{\tilde{x}}
\def\Utilde{\widetilde{U}}
\DeclareMathOperator{\Stab}{\hbox{\rm Stab}} 
\DeclareMathOperator{\diam}{\hbox{\rm diam}}
\DeclareMathOperator{\circum}{\hbox{\rm circum}}
\DeclareMathOperator{\diag}{\hbox{\rm diag}}
\DeclareMathOperator{\Zykel}{\hbox{\rm Zykel}} 
\def\Pic{\mathop{\rm Pic}\nolimits} 
\def\Aut{\mathop{\rm Aut}\nolimits} 
\newcommand\semidirect{\rtimes}
\newcommand\tensor{\otimes}
\newcommand\sset{\subseteq}
\newcommand\iso{\cong}
\newcommand\homotopic{\simeq}
\newcommand{\marginlabel}[1]{\mbox{}\marginpar{\raggedleft\hspace{0pt}#1}}
\newcommand{\defn}[1]{\vardefn{#1}{#1}}
\newcommand{\vardefn}[2]{\emph{#1}\marginlabel{\footnotesize #2}}
\newcommand{\notation}[1]{#1\marginlabel{#1}}
\newcommand{\nonotation}[1]{\marginlabel{#1}}
\theoremstyle{plain}
\newtheorem{theorem}{Theorem}
\newtheorem{lemma}[theorem]{Lemma}
\newtheorem{conjecture}[theorem]{Conjecture}
\newtheorem{corollary}[theorem]{Corollary}
\numberwithin{theorem}{section}
\numberwithin{equation}{section}
\theoremstyle{remark}
\newtheorem{example}[theorem]{Example}
\newtheorem*{remark}{Remark}
\newtheorem*{remarks}{Remarks}
\renewcommand{\theenumi}{\alph{enumi}}
\begin{document}

\title[Completions and branched covers]{Completions, branched covers, Artin groups and
  Singularity Theory}
\author{Daniel Allcock}
\address{Department of Mathematics\\University of Texas at Austin\\Austin, TX 78712}
\email{allcock@math.utexas.edu}
\urladdr{http://www.math.utexas.edu/\textasciitilde allcock}
\date{June 17, 2011}
\thanks{Partly supported by NSF grants DMS-024512 and DMS-0600112.}
\begin{abstract}
We study the curvature of metric spaces and branched covers of
Riemannian manifolds, with applications in topology and algebraic
geometry.  Here curvature bounds are expressed in terms of the
\CATx\ inequality.  We prove a general \CATx\ extension theorem,
giving sufficient conditions on and near the boundary of a locally
\CATx\ metric space for the completion to be \CATx.  We use this to
prove that a branched cover of a complete Riemannian manifold is
locally \CATx\ if and only if all tangent spaces are \CAT0 and the
base has sectional curvature bounded above by $\x$.  We also show that
the branched cover is a geodesic space.  Using our curvature bound and
a local asphericity assumption we give a sufficient condition for the
branched cover to be globally \CATx\ and the complement of the branch
locus to be contractible.

We conjecture that the universal branched cover of $\C^n$ over the
mirrors of a finite Coxeter group is \CAT0.  Conditionally on this
conjecture, we use our machinery to prove the Arnol$'$d-Pham-Thom
conjecture on $K(\pi,1)$ spaces for Artin groups.  Also conditionally,
we prove the asphericity of moduli spaces of amply lattice-polarized
K3 surfaces and of the discriminant complements of all the unimodal
hypersurface singularities in Arnol$'$d's hierarchy.
\end{abstract}

\subjclass{Primary 51K10, Secondary 53C23, 57N65, 20F36, 14B07}
\maketitle

\section{Introduction}
\label{sec-intro}

\noindent
We are interested in when a branched cover $M'$ of a complete
Riemannian manifold $M$ with sectional curvature${}\leq\x$ satisfies
the same curvature bound.  Our interest in this problem goes back to
\cite{Allcock-asphericity} and stems from its
applications to the well-known $K(\pi,1)$ problem for Artin groups,
the topology of certain moduli spaces of K3 surfaces, and
the topology of discriminant complements of singularities.  A simple
way to formulate the idea of a branched cover in this setting is to
remove a subset $\D$ from $M$, leaving $M_0$, take a covering space
$M_0'$ of $M_0$, and metrically complete it to get $M'$.  So the study
of this sort of branched covering is naturally phrased in terms of
completions of metric spaces that satisfy some local curvature bounds.

A good setting for this situation uses the \CATx\ inequality of as the
definition of an upper curvature bound.  So we address a question more
general than the original question about Riemannian manifolds.
Namely, if $X$ is a metric space, $\D$ a closed subset of it, and
$X-\D$ is locally \CATx, under what conditions is $X$ \CATx?  Here is
our result when $\x\leq0$; the corresponding result for $\x>0$ is
theorem~\ref{thm-positive-curvature-case}.

\renewcommand{\theenumi}{\Alph{enumi}}

\begin{theorem}[=\ref{thm-CATx-extension}; the ``\CATx\ extension theorem'']
\label{thm-cat-x-extension-INTRO}
Suppose $\x\leq0$, \notation{$X$} is a complete geodesic space and
\notation{$\D$} a nonempty closed convex subset.  Assume also:
\begin{enumerate}
\item
\label{hyp-radial-geodesic-triangles-CATx-INTRO}
every geodesic triangle with a vertex in $\D$ satisfies \CATx;
\item
\label{hyp-tangent-spaces-unique-local-geodesics-INTRO}
local geodesics in the metric completion $\overline{T_cX}$ of the
tangent space $T_c X$
are unique, for every $c\in \D$; and
\item
\label{hyp-quantified-local-CATx-INTRO}
there exists \notation{$\lambda$}${}>0$ such that for all $x\in X-\D$, the
  closed ball with center $x$ and radius $\lambda\cdot d(x,\D)$ is
  complete and \CATx.
\end{enumerate}
Then $X$ is \CATx.
\end{theorem}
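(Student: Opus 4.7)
The strategy is to reduce the global \CATx\ assertion to a local one plus simple-connectivity of $X$: since $\x\leq0$ and $X$ is complete, the \CATx\ version of Cartan--Hadamard says that a complete, simply connected, locally \CATx\ geodesic space is globally \CATx, so it suffices to show (i) $X$ is locally \CATx\ at every point and (ii) $X$ is simply connected.

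For (i), local \CATx\ at every $x\in X-\D$ is immediate from \eqref{hyp-quantified-local-CATx-INTRO}, since the ball of radius $\lambda\,d(x,\D)$ is \CATx. At a point $c\in\D$ the approach is subdivision: a small geodesic triangle $\tau$ with vertices $x,y,z$ near $c$ is cut by the geodesic from $c$ to one of its vertices into two subtriangles, each having $c$ as a vertex and hence satisfying \CATx\ by \eqref{hyp-radial-geodesic-triangles-CATx-INTRO}. I plan to reassemble these into a \CATx\ comparison for $\tau$ via the Alexandrov--Reshetnyak gluing criterion; the angle condition at the shared edge is provided by \eqref{hyp-tangent-spaces-unique-local-geodesics-INTRO}, since unique local geodesics in the completion $\overline{T_cX}$ rule out branching of tangent directions and should force the Alexandrov angle at $c$ to agree with the one in the model.

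For (ii), I apply \eqref{hyp-radial-geodesic-triangles-CATx-INTRO} to the degenerate triangle with vertices $c,x,x$ whose two nondegenerate sides are a pair of candidate geodesics from $c$ to $x$: the comparison triangle has $\bar y=\bar z$, so \CATx\ forces $d(p,q)\leq d(\bar p,\bar q)=0$ for comparison points $p,q$ on the two candidates; hence the geodesic from any fixed $c\in\D$ to any $x\in X$ is unique. Then $H(x,t)=\gamma_{c,x}(t\cdot d(c,x))$ is a continuous contraction of $X$ to $c$, so $X$ is contractible and in particular simply connected.

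The main obstacle is the local \CATx\ claim at points of $\D$. Hypothesis \eqref{hyp-tangent-spaces-unique-local-geodesics-INTRO} is infinitesimal whereas the desired conclusion concerns finite (though small) triangles, and the uniform \CATx-radius in \eqref{hyp-quantified-local-CATx-INTRO} degenerates as $x\to\D$ because $\lambda\,d(x,\D)$ vanishes. Turning ``no branching in $\overline{T_cX}$'' into the finite-distance angle estimate required by the Alexandrov--Reshetnyak gluing lemma, uniformly for triangles of all shapes in a shrinking neighborhood of $c$, is the technical core of the argument; I expect hypothesis \eqref{hyp-radial-geodesic-triangles-CATx-INTRO} to feed in a second time here as a quantitative substitute for an ``exponential map'' from $c$.
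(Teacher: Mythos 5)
Your top-level reduction via Cartan--Hadamard to ``$X$ is locally \CATx\ and simply connected'' is a genuinely different strategy from the paper's, which never isolates local \CATx\ as an intermediate step: instead it proves global uniqueness of local geodesics between \emph{any} two points of $X$ (lemma~\ref{lem-unique-local-geodesics}) and then handles all geodesic triangles directly by subdivision and Alexandrov patchwork. Your simple-connectivity argument --- uniqueness of geodesics from $c\in\D$ via the degenerate triangle, then a geodesic contraction to $c$ --- is sound and matches the content of lemma~\ref{lem-radial-triangles-are-CATx}.

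The gap is your local \CATx\ claim at $c\in\D$, and it is a real one. As written, the subdivision is geometrically ill-posed: the geodesic $\overline{cx}$ from $c$ to a vertex $x$ of a triangle $\tau=xyz$ not containing $c$ does not cut $\tau$ into two subtriangles with vertex at $c$, and the fan $cxy$, $cyz$, $czx$ tiles $\tau$ only when $c$ lies inside it. More seriously, the assembly step you describe uses only \eqref{hyp-radial-geodesic-triangles-CATx-INTRO} and \eqref{hyp-tangent-spaces-unique-local-geodesics-INTRO}, and example~\ref{eg-cusped-cone} (the cusped surface of revolution, $\D$ the cusp point) shows those two hypotheses \emph{cannot} imply local \CATx\ at $c$: there \eqref{hyp-radial-geodesic-triangles-CATx-INTRO} and \eqref{hyp-tangent-spaces-unique-local-geodesics-INTRO} hold but the space is not locally CAT(0) at the cusp. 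So the angle condition you hope \eqref{hyp-tangent-spaces-unique-local-geodesics-INTRO} will supply cannot come from \eqref{hyp-tangent-spaces-unique-local-geodesics-INTRO} alone; hypothesis \eqref{hyp-quantified-local-CATx-INTRO} must enter in an essential way, and you acknowledge this but do not indicate how. The paper's resolution is not an angle estimate at all but a blow-up argument: given two local geodesics $\beta,\beta'$ from $x$ to $y$ off $\D$, it homotopes them toward the center $c$, rescales (with \eqref{hyp-quantified-local-CATx-INTRO} keeping the rescaled balls uniformly \CATx), passes to limits in $\overline{T_cX}$, invokes \eqref{hyp-tangent-spaces-unique-local-geodesics-INTRO} to identify the two limits, and propagates $\beta=\beta'$ back from a small scale (lemmas~\ref{lem-no-obstructions-to-deformation}--\ref{lem-beta-and-beta-prime-become-closer-faster-than-linearly-as-they-shrink}). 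Absent something of that kind, the reduction to a local statement near $c$ makes no real progress, since a small ball around $c$ together with its intersection with $\D$ carries essentially the same difficulty as $(X,\D)$ itself.
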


\renewcommand{\theenumi}{\alph{enumi}}

\noindent
The hypotheses \eqref{hyp-radial-geodesic-triangles-CATx-INTRO} and
\eqref{hyp-tangent-spaces-unique-local-geodesics-INTRO} are obviously necessary, but
they are not sufficient.  We think of
\eqref{hyp-quantified-local-CATx-INTRO} as a sort of ``uniformly local
\CATx'' condition.  To us this seems a
simple and natural condition, particularly in light of our
example~\ref{eg-cusped-cone}.

One application of this theorem and its $\x>0$ analogue is to a
simplicial complex $X$ in which each simplex is given a metric of
constant curvature~$\x$.  The key result here is that $X$ is locally
\CATx\ just if every link in \CAT1.  This was stated by Gromov
\cite[4.2.A]{Gromov} and first proven in full generality by Bridson
\cite{Bridson-thesis}\cite[II.5]{bridson-haefliger}.  (Any of several minor regularity
conditions is needed in order for $X$ to be a geodesic space.  Also,
see \cite[p.~377]{Bridson-thesis} for references to earlier work.)
using our results one can reprove Bridson's theorem by an induction on
codimension (similar to but easier than our proof of
theorem~\ref{thm-riem-mfld-br-cover-locally-CATx}).  The link
condition is exactly the condition that the tangent spaces be \CAT0,
and our hypothesis~\eqref{hyp-quantified-local-CATx-INTRO} is trivial in this case.  So one
can view our theorem as allowing the link condition to apply to metric
spaces more general than metrized simplicial complexes.

Our theorem also promises to have applications to
situations involving degenerate Riemannian metrics.  For example, a
celebrated result of Wolpert \cite{Wolpert}\cite{Wolpert-survey} is
that Teichm\"uller space $\calT$ is \CAT0 under the Weil-Petersen
metric.  Although this metric is Riemannian with nonpositive sectional
curvature, it is not complete, so it was not clear that geodesics
exist.  The geometry of the metric completion near the boundary is
similar to our example~\ref{eg-cusped-cone}, so it should be possible
to develop the Weil-Petersen geometry of Teichm\"uller space using
our machinery.

Our main result on the curvature of branched covers of Riemannian
manifolds is the expected one:

\begin{theorem}[=\ref{thm-riem-mfld-br-cover-locally-CATx}]
\label{thm-riem-mfld-br-cover-locally-CATx-INTRO}
Suppose $M$ is a complete Riemannian manifold with section
curvature${}\leq\x\in\R$ and $\D\sset M$ is locally the union of
finitely many totally geodesic submanifolds of codimension~$2$.
Suppose also that $M_0'$ is a covering space of $M_0:=M-\D$ and $M'$
is its metric completion.  Then $M'$ is locally \CATx\ if and only if
each of its tangent spaces is \CAT0.
\end{theorem}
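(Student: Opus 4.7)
The plan is to handle the two directions separately, reducing the harder ``if'' direction to the \CATx\ extension theorem.

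For the ``only if'' direction, this is the standard fact that the Alexandrov tangent cone of a locally \CATx\ space is \CAT0 (immediate from the Gromov--Hausdorff convergence of large-factor rescalings, since rescaling a \CATx\ condition by $n\to\infty$ yields \CAT0 in the limit).

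For ``if'', I show every point of $M'$ has a \CATx\ neighborhood. A point $x\in M_0'$ has one by Riemannian comparison: $M_0'\to M_0\sset M$ is a local isometry and $M$ has sectional curvature${}\leq\x$, so small convex balls around $x$ are \CATx. For $c\in M'-M_0'$, the plan is to apply Theorem~\ref{thm-cat-x-extension-INTRO} (or its $\x>0$ analogue) to a small closed ball $\overline{B}(c,r)\sset M'$, taking the distinguished closed subset to be $\overline{B}(c,r)\cap(M'-M_0')$ (or a suitably convex subset of it). Hypothesis~\ref{hyp-tangent-spaces-unique-local-geodesics-INTRO} follows from the \CAT0 assumption on $T_c M'$, since completions of \CAT0 spaces are \CAT0 and \CAT0 spaces have unique local geodesics. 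Hypothesis~\ref{hyp-quantified-local-CATx-INTRO} follows from the local isometry: for $\lambda$ sufficiently small (uniformly on the compact image of $\overline{B}(c,r)$ in $M$, where convexity and comparison radii are bounded below), the ball of radius $\lambda\,d(x,\D)$ around $x\in M_0'$ lifts isometrically from a convex \CATx\ ball in $M$.

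The main obstacle is hypothesis~\ref{hyp-radial-geodesic-triangles-CATx-INTRO}, that every geodesic triangle with a vertex in the branch locus is \CATx. My plan is induction on the depth of the local stratification of $M'-M_0'$ at $c$. Since $\D\sset M$ is locally the union of finitely many codimension-$2$ totally geodesic submanifolds, there is a natural stratification by the number of sheets meeting each point. In the base case $c$ lies in the top stratum: $M'$ near $c$ has a local product structure $\D_{\text{top}}\times C_\alpha$, with $C_\alpha$ a Euclidean cone of angle $\alpha=k\cdot2\pi$ (for an integer $k\geq1$ determined by the covering, automatically \CAT0), and \CATx\ for triangles with a vertex at $c$ follows from this product decomposition together with the sectional-curvature bound on $M$. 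For deeper strata, I would split a given triangle at its first crossing of a less-deep stratum, apply the inductive hypothesis there, and reassemble; here the \CAT0 hypothesis on $T_c M'$ enters essentially, since in deeper strata the tangent cone is no longer an automatic product of Euclidean factors with a single cone, so its \CAT0-ness must be assumed.

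The chief difficulty lies in making the base-case comparison precise: one must translate the product decomposition and the sectional-curvature bound on $M$ into a sharp upper bound on the angle at $c$ of a geodesic triangle in $M'$. The codimension-$2$ hypothesis is essential, since it reduces the normal direction at a top-stratum point to a single Euclidean cone, and the totally-geodesic hypothesis on $\D$ ensures that the branch locus is locally convex in $M$ so that the stratification and product decomposition behave well on the small balls where Theorem~\ref{thm-cat-x-extension-INTRO} is applied.
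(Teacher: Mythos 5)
Your overall strategy---reducing to the \CATx\ extension theorem, with (A) via some stratification argument, (B) via the \CAT0 assumption, and (C) via uniformity---is the same as the paper's. But there is a genuine gap at the heart of your argument: the interaction between your choice of distinguished subset $\D$ and hypothesis~\eqref{hyp-quantified-local-CATx-INTRO}.

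You propose taking $\D=\overline B(c,r)\cap(M'-M_0')$, i.e.\ the \emph{full} local branch locus. But this set is typically \emph{not convex} in $M'$: if $\D\subset M$ is locally the union of two codimension-two submanifolds $\D_1,\D_2$, then a geodesic between a point of $\D_1'$ and a point of $\D_2'$ (both branch points, neither in the deep stratum) generically runs through $M_0'$. Your hedge ``or a suitably convex subset'' is where the real difficulty hides. The natural convex choice is the single deepest stratum $D_i'$ through the center $c$ (which the paper uses, and whose convexity it proves). But once $\D$ is taken to be $D_i'$ only, your claimed proof of~\eqref{hyp-quantified-local-CATx-INTRO} collapses: a point $x\in M_0'$ can be far from $D_i'$ but very close to some \emph{intermediate} stratum $D_k'$ with $i>k>0$, and then the ball $B_{\lambda\, d(x,D_i')}(x)$ is nowhere near lifting isometrically from $M$---it is itself a nontrivial branched cover. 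So the ``local isometry'' argument you give for~\eqref{hyp-quantified-local-CATx-INTRO} does not apply, and this is precisely where the paper's proof is forced into the ``delicate double induction'': it establishes (by downward induction on the codimension $k$, assuming the theorem already known at strata of codimension $<i$) that there is a uniform $\lambda_k>0$ so that $\overline{B_{\lambda_k d(y',D_i')}(y')}$ is \CATx\ for all $y'$ in the codimension-$k$ part of the small ball. That induction is the real content of the proof, and your write-up does not contain an idea that substitutes for it.

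A secondary remark: your base-case argument for~\eqref{hyp-radial-geodesic-triangles-CATx-INTRO}, based on a metric product decomposition $\D_{\mathrm{top}}\times C_\alpha$ near a top-stratum branch point, is not available when $M$ is not flat; a tubular neighborhood of a totally geodesic submanifold is not a Riemannian product in general. The paper instead verifies \eqref{hyp-radial-geodesic-triangles-CATx-INTRO} by subdividing the opposite edge into arcs that project isometrically to $M$ and reassembling via Alexandrov's lemma (following Charney--Davis). This is cleaner and sidesteps the product issue.

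Finally, a small caution about the $\x>0$ case: theorem~\ref{thm-positive-curvature-case} requires the whole space to lie within $\tfrac14\circum X_\x$ of a point of $\D$, so the radius $r$ of the ball you pick must be kept small accordingly. The paper builds this into the definition of the comparison radius $r(x)$.
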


\noindent
This reduces the question of local curvature bounds to an
infinitesimal question.  The point is that the tangent spaces to $M'$
are branched covers of the tangent spaces to $M$
(lemma~\ref{lem-tangent-spaces-are-branched-covers-of-tangent-spaces}),
which are Euclidean.  In some examples one can verify this
tangent-space condition, but in general it seems to be quite hard.
Our proof of the theorem is also surprisingly hard, requiring a
delicate double induction.  Charney and Davis \cite[thm.~5.3]{CD}
state a result similar to this one, but there is a gap in the proof;
see the remark at the end of section~\ref{sec-riem-mflds-local}.

Next we study the global geometry of $M'$.  First we show that $M'$ is
a geodesic space (theorem~\ref{thm-br-cover-is-geodesic-space}).  Then
we prove the following theorem, which is our tool in applications.
(We say a space is \defn{aspherical} if its homotopy groups
$\pi_{n>1}$ are trivial.)

\begin{theorem}[=\ref{thm-riem-mfld-br-cover-globally-CATx}]
\label{thm-thm-riem-mfld-br-cover-globally-CATx-INTRO}
Assume the hypotheses of
theorem~\ref{thm-riem-mfld-br-cover-locally-CATx-INTRO}, with
$\x\leq0$, $M$ connected and $M_0'$ the universal cover of $M_0$.  If
$T_xM-T_x\D$ is aspherical for all $x\in X$, and each tangent space to
$M'$ is \CAT0, then $M_0'$ is contractible and $M'$ is \CATx.
\end{theorem}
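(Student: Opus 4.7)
The plan is to establish first that $M'$ is globally CAT($\x$) via a Cartan--Hadamard argument, and then to deduce contractibility of $M_0'$ from this. By Theorem~\ref{thm-riem-mfld-br-cover-locally-CATx-INTRO} and the tangent-space CAT(0) hypothesis, $M'$ is locally CAT($\x$); by Theorem~\ref{thm-br-cover-is-geodesic-space} it is a complete geodesic space. The CAT($\x$) version of Cartan--Hadamard, valid for $\x \leq 0$, then reduces the global CAT($\x$) conclusion to showing $M'$ is simply connected. The technical core is a local analysis near the branch locus $\D' := p^{-1}(\D) \cap M'$, where $p : M' \to M$ is the projection.

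For simple connectedness, fix $\tilde x \in \D'$ lying over $x \in \D$. The tangent space $T_{\tilde x} M'$ is CAT(0), hence contractible; by Lemma~\ref{lem-tangent-spaces-are-branched-covers-of-tangent-spaces} it is a branched cover of $T_x M$ with branch locus $T_{\tilde x} \D'$ lying over $T_x \D$, and $T_{\tilde x} M' \setminus T_{\tilde x} \D'$ is a connected unbranched cover of $T_x M \setminus T_x \D$. The step requiring care is to verify that this cover is the \emph{universal} cover; given this, the asphericity hypothesis forces $T_{\tilde x} M' \setminus T_{\tilde x} \D'$ to be contractible. Local CAT($\x$) then provides a radius $r > 0$ for which $B(\tilde x, r)$ is CAT($\x$) and hence contractible, and $B(\tilde x, r) \setminus \D'$ shares the homotopy type of a neighborhood of the origin in $T_{\tilde x} M' \setminus T_{\tilde x} \D'$ and is therefore contractible as well. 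Let $N$ be the union of such balls over all $\tilde x \in \D'$. In the open cover $M' = M_0' \cup N$, the piece $M_0'$ is simply connected (as a universal cover of the connected space $M_0$), each component of $N$ is contractible, and each component of $N \cap M_0' = N \setminus \D'$ is contractible. Van Kampen yields $\pi_1(M') = 1$, and Cartan--Hadamard gives $M'$ CAT($\x$), in particular contractible.

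For contractibility of $M_0'$, it suffices to show the inclusion $M_0' \hookrightarrow M'$ is a weak equivalence. Using the same open cover, present $M'$ as the homotopy pushout of $M_0' \leftarrow N \cap M_0' \rightarrow N$; since each component of $N$ and of $N \cap M_0'$ is contractible, this pushout is homotopy equivalent to $M_0'$, giving $M_0' \homotopic M' \homotopic \ast$.

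The main obstacle is the identification of $T_{\tilde x} M' \setminus T_{\tilde x} \D'$ as the universal cover of $T_x M \setminus T_x \D$: one must check that the cover inherited from $M_0' \to M_0$ at the infinitesimal level realizes the full $\pi_1(T_x M \setminus T_x \D)$ as local monodromy, which is needed for the asphericity hypothesis to translate into local contractibility of punctured neighborhoods. Without this step, neither the van Kampen argument for simple connectedness nor the pushout argument for contractibility of $M_0'$ would go through.
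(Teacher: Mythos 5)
Your overall architecture — establish local \CATx\ via Theorem~\ref{thm-riem-mfld-br-cover-locally-CATx-INTRO}, deduce global \CATx\ via Cartan--Hadamard once simple connectedness is known, and transfer contractibility from $M'$ back to $M_0'$ — matches the paper's plan, but your execution has two substantive gaps, one of which you flag yourself and one of which you do not.

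The gap you flag (is the local cover the \emph{universal} cover of $T_xM - T_x\D$?) is resolved in the paper by a short but essential reduction: one first replaces $M$ by its Riemannian universal cover and $\D$ by its preimage. Since $\x\leq 0$, Cartan--Hadamard (Riemannian version) makes $\exp_x : T_xM \to M$ a diffeomorphism carrying $T_x\D$ to $\D$, so $\pi_1(T_xM - T_x\D) \to \pi_1(M-\D)$ is injective; this is exactly hypothesis~\eqref{hyp-local-pi1s-injective} of Lemma~\ref{lem-completion-is-homotopy-equivalence}. Without this reduction your argument simply does not close, and you should not present it as a ``step requiring care'' but as the crux.

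The gap you do not flag is more serious. You define $N$ as the union of small \CATx\ balls around all points of $\D'$ and assert that ``each component of $N$ is contractible'' and ``each component of $N\cap M_0'$ is contractible.'' Neither follows from the contractibility of a single ball or a single punctured ball: once the balls around nearby branch points of different strata start overlapping, the union can have complicated homotopy type, and no argument is given. Establishing exactly the kind of control you need here is the hard part of the paper's Lemma~\ref{lem-completion-is-homotopy-equivalence}, which proves that $M_0' \hookrightarrow M'$ is a homotopy equivalence by a careful descending induction on stratum dimension (removing $M_i'$ from $\Sigma_i'$ via sphere-bundle retractions whose fibers are shown contractible using the asphericity and locally-universal hypotheses, plus an induction on $\dim M$). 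Your van Kampen and homotopy-pushout arguments both rest on the unproved contractibility of components of $N$ and $N\cap M_0'$, so as written they do not constitute a proof; they amount to assuming a local version of the very lemma the paper must prove. Once one has Lemma~\ref{lem-completion-is-homotopy-equivalence}, the paper's conclusion is immediate: $M_0' \to M'$ is a homotopy equivalence, $M_0'$ is simply connected, hence $M'$ is, the Cartan--Hadamard theorem for \CATx\ spaces makes $M'$ \CATx\ and contractible, and the homotopy equivalence transfers contractibility to $M_0'$.
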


This theorem generalizes of the main result of
\cite[lemma~3.3]{Allcock-asphericity}, which is the special case where
$\D$ is locally modeled on the coordinate hyperplanes of $\C^n$.

We give three applications of this theorem, but unfortunately they are
conditional on the following conjecture about finite Coxeter groups.
Nevertheless, we view the unification of these problems and their
reduction to this conjecture as progress.

\begin{conjecture}[=\ref{conj-Coxeter-arrangement-implies-CAT0}]
\label{conj-Coxeter-arrangement-implies-CAT0-INTRO}
Let $W$ be a finite Coxeter group acting isometrically on
$\C^n$ and let $\D$ be the union of the hyperplanes fixed by the
reflections in $W$.  Then the metric completion of the universal cover
of $\C^n-\D$ is \CAT0.
\end{conjecture}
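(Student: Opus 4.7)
The plan is to prove the conjecture by induction on $|W|$, applying Theorem \ref{thm-thm-riem-mfld-br-cover-globally-CATx-INTRO} with $\x=0$ to $M=\C^n$, branch locus $\D$, and $M_0'$ the universal cover of $\C^n-\D$. After splitting off any pointwise fixed subspace as a direct Euclidean factor I may assume $W$ acts essentially on $\C^n$, so the only point $x$ with stabilizer $W_x=W$ is $x=0$.

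The main structural idea is to alternate between a \emph{flat} and a \emph{spherical} version of the statement. Because $\D$ is preserved by the $\R^+$-action on $\C^n$ by dilation, the splitting $\C^n-\D\iso\R^+\times L$, with $L=S^{2n-1}-(\D\cap S^{2n-1})$, lifts to a splitting $M_0'\iso\R^+\times\tilde L$. With the pulled-back flat metric this is the Euclidean cone over $\tilde L$ (equipped with the pulled-back spherical metric from $L$), and completing gives $M'$ isometric to the Euclidean cone over $\overline{\tilde L}$. By the standard criterion that a Euclidean cone is \CAT{0} iff its link is \CAT{1}, the flat conjecture for $W$ is equivalent to the \emph{spherical conjecture for $W$}: the completion of the universal cover of $S^{2n-1}-(\D\cap S^{2n-1})$ is \CAT{1}.

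For the spherical conjecture I would apply the positive-curvature analogue (Theorem \ref{thm-positive-curvature-case}) of the extension theorem to the completion of the universal cover of $S^{2n-1}-(\D\cap S^{2n-1})$, with $\x=1$ and branch locus the preimage of $\D\cap S^{2n-1}$. The gain on the sphere is that, since $W$ acts essentially on $\C^n$ and $0\notin S^{2n-1}$, every stabilizer $W_y$ with $y\in S^{2n-1}$ is a \emph{proper} parabolic of $W$. Writing $\C^n=F_y\oplus F_y^\perp$ with $W_y$ essential on $F_y^\perp$, the local picture of $\D$ near $y$ splits as a flat factor times the essential $W_y$-arrangement. Consequently the tangent-space \CAT{0} hypothesis and the uniformly local \CAT{1} hypothesis away from the branch locus both follow from the flat conjecture applied to $W_y$, which is available by the inductive hypothesis since $|W_y|<|W|$.

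The two reductions combine into a single induction on $|W|$ bottoming out at the trivial group. The chief obstacle, and the reason the conjecture is genuinely hard, is the cone-point step itself: a tangent space of $M'$ at a preimage of the origin is $M'$ again, so the tangent-space \CAT{0} hypothesis at that point cannot be reduced to a smaller Coxeter group, and the full content of the conjecture is concentrated in the cone--link equivalence and in establishing a spherical analogue of our extension machinery strong enough to apply in this singular, branched-cover setting. In particular, verifying the global hypothesis~(a) of Theorem \ref{thm-cat-x-extension-INTRO} (or its positive-curvature analogue), that every geodesic triangle with a vertex in the branch locus satisfies \CATx, from purely local parabolic data is where the hard work lies, and is especially delicate on the sphere because triangles there can be long relative to the diameter.
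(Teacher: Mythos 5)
This statement is a \emph{conjecture} in the paper, not a theorem; the paper states explicitly that its applications are conditional on it and offers no proof of it. So there is no argument of the paper's to compare against. Evaluated on its own terms, your text is a reduction scheme with an honestly acknowledged hole, not a proof, and your last paragraph correctly names where the hole is: the universal branched cover of $T_0\C^n$ over $T_0\D$ is $M'$ itself, so the tangent-space CAT(0) hypothesis of Theorem~\ref{thm-riem-mfld-br-cover-globally-CATx} at a preimage of the origin \emph{is} the conjecture, and neither the induction on $|W|$ nor the cone/link reduction touches it. That said, the pieces of your scheme that do work are sound and are consonant with the paper's own remark~(3) following Conjecture~\ref{conj-Coxeter-arrangement-implies-CAT0}: the dilation-invariance of $\D$, the splitting of the universal cover as a cone over the lifted link, and the observation that every stabilizer $W_y$ with $y\in S^{2n-1}$ is a proper parabolic when $W$ acts essentially are all correct.

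There are, however, additional defects in the spherical step as you have written it, beyond the one you name. You propose to apply Theorem~\ref{thm-positive-curvature-case} with $X$ the completed spherical universal branched cover and $\D$ the full preimage of the mirror arrangement in $S^{2n-1}$. That theorem (like Theorem~\ref{thm-CATx-extension}) requires $\D$ to be a closed \emph{convex} subset of $X$, and it additionally requires all of $X$ to lie within $R<\frac14\circum X_\x=\pi/2$ of a single point of $\D$. Neither hypothesis holds here: a union of mirrors is not convex, and the universal branched cover of $S^{2n-1}$ over the mirrors has unbounded diameter. The paper never applies either extension theorem globally to a whole branched cover with the whole branch locus as $\D$; in the proof of Theorem~\ref{thm-riem-mfld-br-cover-locally-CATx} it applies them only to small closed balls, with $\D$ taken to be a single top-dimensional stratum inside the ball, and runs a double induction over strata and radii. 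If you rebuilt your spherical step along those lines, the inductive input from proper parabolics would give you precisely a \emph{local} CAT(1) statement for the spherical branched cover; you would then still need a positive-curvature Cartan--Hadamard step to upgrade local CAT(1) to global CAT(1), and no such theorem exists once the diameter exceeds $\pi/2$. So passing to the sphere does not dissolve the difficulty; it repackages it as the genuinely global problem of showing a non-compact, positively curved, branched cover satisfies CAT(1). That is exactly the content of the conjecture, and nothing in the proposal, nor in the paper, closes it.
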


This is closely related
to conjecture~3 of Charney and Davis \cite{CD-artin}, and in fact our
approach to the Artin group $K(\pi,1)$ problem is to show that our
conjecture implies theirs.  Combining our conjecture with Deligne's celebrated result
\cite{Deligne} on the asphericity of mirror complements for finite
Coxeter groups, we obtain:

\begin{corollary}[=\ref{cor-assuming-conjecture-locally-Coxeter-implies-cover-contractible}]
\label{cor-assuming-conjecture-locally-Coxeter-implies-cover-contractible-INTRO}
Assume conjecture~\ref{conj-Coxeter-arrangement-implies-CAT0-INTRO}.
Also assume the situation of
theorem~\ref{thm-riem-mfld-br-cover-locally-CATx-INTRO}, with
$\x\leq0$, $M$ connected and $\D$ locally modeled on the complexified
mirror arrangements of finite Coxeter groups.  Then the universal
cover of $M-\D$ is contractible.
\end{corollary}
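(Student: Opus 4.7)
Apply Theorem~\ref{thm-thm-riem-mfld-br-cover-globally-CATx-INTRO}. All of its setup-type hypotheses---those of Theorem~\ref{thm-riem-mfld-br-cover-locally-CATx-INTRO} together with $\x\leq 0$, $M$ connected, and $M_0'$ being the universal cover of $M_0:=M-\D$---are supplied verbatim by the corollary. What remains is to check the two substantive conditions: asphericity of $T_xM - T_x\D$ for every $x \in M$, and that every tangent space to $M'$ is \CAT0.

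Asphericity of $T_xM - T_x\D$ reduces immediately to Deligne's theorem. At any $x \in \D$, the local-model hypothesis says that, in suitable real-linear coordinates, the pair $(T_xM, T_x\D)$ is identified with $(\C^n, \bigcup_r H_r)$, where the $H_r$ are the complex hyperplanes fixed by the reflections of some finite Coxeter group $W$ acting isometrically on $\C^n$. Deligne's theorem \cite{Deligne} then provides that $\C^n - \bigcup_r H_r$ is a $K(\pi,1)$ for the associated pure Artin group, and in particular is aspherical. At a smooth point $x \in M - \D$ the statement is trivial, since $T_x\D = \emptyset$ and $T_xM$ is contractible.

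For the tangent-space curvature hypothesis, I would invoke the lemma identifying tangent spaces to $M'$ as branched covers of tangent spaces to $M$, which presents the tangent space to $M'$ at any $x'$ over $x \in M$ as a metric branched cover of $T_xM$ ramified over $T_x\D$. The universality of $M_0'$ should force this branched cover to be the metric completion of the \emph{universal} cover of $T_xM - T_x\D$: the tangent cone at $x'$ sees only an arbitrarily small punctured neighborhood of $x$, and on such a neighborhood the restriction of $M_0' \to M_0$ is, on the component singled out by $x'$, the universal cover of the punctured neighborhood. Under the local-Coxeter hypothesis this is precisely the object addressed by conjecture~\ref{conj-Coxeter-arrangement-implies-CAT0-INTRO}, which declares the completion to be \CAT0. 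At a smooth $x \in M - \D$ the tangent space to $M'$ is $T_xM$ itself, trivially \CAT0.

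With both remaining hypotheses of Theorem~\ref{thm-thm-riem-mfld-br-cover-globally-CATx-INTRO} in hand, the theorem yields that $M_0'$ is contractible, as required. The main substantive obstacle in this plan is the tangent-space identification step: one must confirm that the tangent cone at each $x'$ really is the \emph{full} universal branched cover of $(T_xM, T_x\D)$, and not some quotient determined by the image of local $\pi_1$ in $\pi_1(M_0)$, since only in the universal case does the conjecture directly apply. I would expect this identification to drop out cleanly from the tangent-cone lemma together with the locality of the tangent-cone construction, but it is the one place where the argument needs care.
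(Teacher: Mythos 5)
Your approach is the same as the paper's: apply the global branched-cover theorem, verify its asphericity hypothesis via Deligne, and verify its curvature hypothesis via the conjecture. The only issue is the final step, where you spend effort worrying about whether $T_{x'}M'$ is the \emph{full} universal branched cover of $(T_xM,T_x\D)$. This worry is an artifact of working with the introduction's phrasing of the theorem (which speaks of tangent spaces to $M'$). The version the corollary actually invokes, Theorem~\ref{thm-riem-mfld-br-cover-globally-CATx}, phrases the curvature hypothesis directly as ``the universal branched cover of $T_xM$ over $T_x\D$ is \CAT0'' --- and this is \emph{verbatim} the statement of conjecture~\ref{conj-Coxeter-arrangement-implies-CAT0} at each point of $\D$. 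With that phrasing, the corollary follows in two lines, exactly as in the paper.

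The identification you are concerned about is real and must be proved somewhere, but it belongs to the proof of Theorem~\ref{thm-riem-mfld-br-cover-globally-CATx}, not to the corollary. There it is handled by first replacing $M$ with its universal cover (which does not change $M_0'$ or $M'$), so that $M\iso T_xM$ under the exponential map; then $\pi_1(T_xM-T_x\D)\to\pi_1(M-\D)$ is visibly injective, and so the cover of a small punctured neighborhood induced by $M_0'\to M_0$ is indeed the universal one. Your instinct that it ``drops out cleanly from locality'' is slightly too optimistic --- without the injectivity the local cover could be a proper intermediate cover --- but the paper supplies the missing ingredient upstream, so nothing is left to do in the corollary itself.
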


We have already mentioned that the Arnol$'$d-Pham-Thom conjecture on
$K(\pi,1)$ spaces for Artin groups follows from conjecture~\ref{conj-Coxeter-arrangement-implies-CAT0-INTRO}:

\begin{theorem}[=\ref{thm-Arnold-Pham-Thom-conjecture}]
\label{thm-Arnold-Pham-Thom-conjecture-INTRO}
Assume conjecture~\ref{conj-Coxeter-arrangement-implies-CAT0-INTRO}.  Let
$W$ be any Coxeter group, acting on its open Tits cone $C\sset\R^n$,
and let $M$ be its tangent bundle $TC$.  Let $\D$ be the union of the
tangent bundles to the mirrors of the reflections of $W$.  Then $M-\D$
has contractible universal cover.
\end{theorem}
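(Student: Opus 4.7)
The plan is to verify the hypotheses of Corollary~\ref{cor-assuming-conjecture-locally-Coxeter-implies-cover-contractible-INTRO} for $(M,\D)=(TC,\D)$. Identify $TC$ with the open subset $C+i\R^n\sset\C^n$ via the natural isomorphism $T\R^n\iso\C^n$, and equip it with the flat metric from $\C^n$; the sectional curvature is identically $0$, so I take $\x=0$, and $M$ is connected because $C$ is convex. The one nontrivial point is completeness, since $TC$ is not metrically complete when $C\ne\R^n$. I would handle this by exhausting $C$ by $W$--invariant open subsets $C_1\sset C_2\sset\cdots$ with $\overline{C_k}\sset C$ and constructing on each $TC_k$ a complete $W$--invariant Riemannian metric $g_k$ of nonpositive curvature that agrees with the flat metric on a large inner region and keeps each mirror tangent bundle totally geodesic. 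A conformal modification near $\partial C_k$, using a $W$--invariant proper convex function on $C_k$, does this. Applying the corollary to each $(TC_k,\D\cap TC_k,g_k)$ and taking a direct limit (contractibility and higher homotopy vanishing pass to increasing unions of open subspaces) yields that the universal cover of $TC-\D$ is contractible.

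Next I verify the local hypotheses. Each mirror of $W$ is an affine hyperplane $H_\alpha\sset\R^n$, and $TH_\alpha$ is its complexification, a real codimension--$2$ affine subspace of $\C^n$, hence totally geodesic for the flat metric (and for $g_k$, by construction). The key property of the Tits cone is that for every $c\in C$ the stabilizer $W_c\leq W$ is a finite Coxeter group, and only the mirrors of $W_c$ meet a small enough $W_c$--invariant convex neighborhood $U$ of $c$; hence $\D$ is locally the union of finitely many such tangent bundles. On such a $U$, the trace of the full mirror arrangement of $W$ coincides with that of $W_c$, so in $TU=U+i\R^n$ the pair $(TU,\D\cap TU)$ is an open subset of $(\C^n,\D_{W_c})$, where $\D_{W_c}$ is the complexified mirror arrangement of the finite Coxeter group $W_c$ (trivially extended across the subspace fixed by $W_c$). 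This is exactly the hypothesis that $\D$ be locally modeled on complexified mirror arrangements of finite Coxeter groups.

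With the hypotheses in place, Corollary~\ref{cor-assuming-conjecture-locally-Coxeter-implies-cover-contractible-INTRO} (conditional on Conjecture~\ref{conj-Coxeter-arrangement-implies-CAT0-INTRO}) gives the theorem. The main obstacle is the completeness issue: producing the $W$--equivariant complete nonpositively curved metrics $g_k$ while keeping the neighborhood of each point of $\D$ flat and keeping the mirror tangent bundles totally geodesic requires care in the construction of the conformal factor, particularly where mirrors accumulate toward $\partial C$. A cleaner alternative I would pursue in parallel is to strengthen the $\CATx$ extension machinery of Theorem~\ref{thm-cat-x-extension-INTRO} so that Corollary~\ref{cor-assuming-conjecture-locally-Coxeter-implies-cover-contractible-INTRO} tolerates an incomplete $M$ with a controlled metric completion (which $TC$ manifestly has via $\overline{C}+i\R^n$), allowing a direct one--shot application without exhaustion; the local--structure checks above would then suffice verbatim.
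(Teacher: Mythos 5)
Your proposal correctly identifies the two facts needed locally (the stabilizer $W_c$ of each $c\in C$ is a finite Coxeter group and $\D$ near $c$ coincides with the complexified mirror arrangement of $W_c$), and also correctly identifies the fundamental obstruction: $TC$ with the flat metric is not complete unless $C=\R^n$, so Corollary~\ref{cor-assuming-conjecture-locally-Coxeter-implies-cover-contractible-INTRO} does not apply directly. But the proposed workaround is a genuine gap, not a minor technicality. Constructing $W$-equivariant \emph{complete} Riemannian metrics $g_k$ of \emph{nonpositive} sectional curvature on $TC_k$ that simultaneously keep the (infinitely many) mirror tangent bundles totally geodesic is not something a conformal modification of the flat metric will give you: for $n>2$, conformal changes of a flat metric generically produce both signs of sectional curvature, and preserving total geodesicness of a prescribed family of submanifolds under a conformal change is a strong constraint that fails except in very special cases. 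Near $\partial C_k$, where mirrors accumulate, this becomes effectively impossible to arrange. The direct-limit step also needs $\pi_1$-injectivity of the inclusions $TC_k-\D\hookrightarrow TC_{k+1}-\D$, which you do not address. So as written, the argument has a hole in its central step.

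The paper sidesteps the completeness problem entirely by a different route. It invokes the reduction of Charney and Davis \cite{CD-artin}: the theorem follows once one shows that, for each \emph{finite} Coxeter group $W$ on $\R^n$, the Deligne complex of the associated Artin group is $\CAT1$. That complex is realized as the preimage $Y$ of $S^{n-1}\sset\R^n$ in the universal branched cover $M'$ of $\C^n$ over the mirrors; the Euclidean cone on $Y$ is the preimage of $\R^n$, and by \cite[thm.~II.3.14]{bridson-haefliger} one only needs this cone to be $\CAT0$. Under Conjecture~\ref{conj-Coxeter-arrangement-implies-CAT0-INTRO}, $M'$ is $\CAT0$, and the cone is shown to be convex in $M'$ by lifting the distance-nonincreasing retraction of $\C^n$ onto $\R^n$ (shrinking imaginary parts). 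Thus the infinite Coxeter group never appears with a metric; only the finite $\C^n$ cases, which are complete, are used. This is cleaner and avoids the differential-geometric construction your approach would require. Your second suggested alternative (strengthening the extension machinery to tolerate incomplete $M$) is closer in spirit to something that could work, but is not carried out; the paper's convexity-in-$M'$ argument is the short path.
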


In our other applications we will take $M$ to be the Hermitian
symmetric space $P\Omega$ associated to the orthogonal group
$\O(2,n)$.  The first application is to the moduli spaces of amply
lattice-polarized K3 surfaces; here $K$ is the ``K3 lattice''
$E_8^2\oplus\bigl(\begin{smallmatrix}0&1\\1&0\end{smallmatrix}\bigr)^3\iso
  H^2(\hbox{any K3 surface};\Z)$.

\begin{theorem}[=\ref{thm-lattice-polarized-K3}]
\label{thm-lattice-polarized-K3-INTRO}
Assume conjecture~\ref{conj-Coxeter-arrangement-implies-CAT0-INTRO}.
Suppose $M$ is an integer quadratic form of signature $(1,t)$ with a
fixed embedding in $K$.  Then the moduli space of amply $M$-polarized
K3 surfaces $(X,j)$, for which the composition $M\to\Pic X\to H^2(X)$ is
isomorphic to $M\to K$, has contractible orbifold universal cover.
\end{theorem}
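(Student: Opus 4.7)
The plan is to realize the moduli space as an orbifold quotient $(P\Omega-\D)/\Gamma^+$, with $P\Omega$ a type IV Hermitian symmetric domain and $\D$ a discriminant hyperplane arrangement, and then to apply Corollary~\ref{cor-assuming-conjecture-locally-Coxeter-implies-cover-contractible-INTRO} to $P\Omega-\D$.

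First, let $N:=M^\perp$ in $K$, of signature $(2,19-t)=:(2,n)$, and let $\Omega\sset P(N\tensor\C)$ be the usual period domain of lines $\C\omega$ with $\omega\cdot\omega=0$ and $\omega\cdot\bar\omega>0$. By the global Torelli theorem and surjectivity of the period map for K3 surfaces, the coarse moduli space of amply $M$-polarized K3 surfaces as described in the statement is isomorphic as an orbifold to $(\Omega-\D)/\Gamma$, where $\Gamma\sset\O(K)$ is the pointwise stabilizer of $M$ and $\D:=\bigcup_{r\in N,\,r^2=-2} r^\perp$. The ampleness condition corresponds exactly to the period avoiding $\D$: if the polarization fails to be ample then some $(-2)$-class orthogonal to the K\"ahler cone gives a $(-2)$-vector in $N$ orthogonal to the period. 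Picking a connected component $P\Omega\sset\Omega$ and the index-two subgroup $\Gamma^+\sset\Gamma$ preserving it presents the moduli space as $(P\Omega-\D)/\Gamma^+$; since $P\Omega$ is simply connected, its orbifold universal cover is the topological universal cover of $P\Omega-\D$.

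I now verify the hypotheses of Corollary~\ref{cor-assuming-conjecture-locally-Coxeter-implies-cover-contractible-INTRO} with ambient manifold $P\Omega$, singular locus $\D$, and $\x:=0$. The space $P\Omega$ is a bounded symmetric domain for $\O(2,n)$, hence a complete, connected, simply connected Riemannian manifold with nonpositive sectional curvature. Each $r^\perp$ is a complex hyperplane and therefore totally geodesic of real codimension~$2$; local finiteness follows because any compact subset of $P\Omega$ corresponds to a compact family of positive $2$-planes in $N\tensor\R$, whose orthogonal complements contain only finitely many lattice vectors of square $-2$. To check the local Coxeter structure at a point $p=\C\omega$, let $R(p)$ be the $\Z$-span in $N$ of the $(-2)$-vectors perpendicular to $\omega$: the real positive $2$-plane $\R\cdot\Re\omega+\R\cdot\Im\omega$ is orthogonal to $R(p)$, so by signature $(2,n)$ of $N$ the sublattice $R(p)$ is negative definite. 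Thus $R(p)(-1)$ is a positive-definite even lattice generated by vectors of square~$2$, i.e. an ADE root lattice, and the local model of $\D$ at $p$ is the complexified mirror arrangement of the associated finite Weyl group. Corollary~\ref{cor-assuming-conjecture-locally-Coxeter-implies-cover-contractible-INTRO} then yields contractibility of the universal cover of $P\Omega-\D$, proving the theorem.

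The main obstacle is the moduli-theoretic step of the first paragraph: pinning down the arithmetic group $\Gamma$ so that the compatibility condition on $M\to\Pic X\to H^2(X)$ matches exactly what the period quotient parametrizes, and verifying that ``amply $M$-polarized'' is equivalent to the period avoiding every $(-2)$-hyperplane in $\D$. Both facts are classical in K3 theory but require careful bookkeeping with orientations and the stable orthogonal group. Once the period presentation is in place, the geometric input is forced by the signature of $N$, and Corollary~\ref{cor-assuming-conjecture-locally-Coxeter-implies-cover-contractible-INTRO} does the rest.
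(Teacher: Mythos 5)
Your proposal is correct and follows essentially the same route as the paper: both invoke Dolgachev's global Torelli theorem to present the moduli space (up to orbifold quotient) as $P\Omega(M^\perp)-\D$ with $\D$ the union of $(-2)$-hyperplanes, observe that $\D$ is locally modeled on ADE Coxeter arrangements, and conclude by Corollary~\ref{cor-assuming-conjecture-locally-Coxeter-implies-cover-contractible-INTRO}. You fill in the signature argument showing that the $(-2)$-vectors orthogonal to a period point span a negative-definite sublattice (hence an ADE root lattice), which the paper leaves implicit, but the structure of the argument is the same.
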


For orientation we remark that a K3 amply polarized by the
$1$-dimensional lattice $\langle4\rangle$ is the same thing as a
smooth quartic surface in $\cp^3$.  The global Torelli theorem for
lattice-polarized K3s says that the spaces of amply lattice-polarized
K3s are exactly the sort of space to which our techniques apply.  A
similar situation arises in Bridgeland's study of stability conditions
on K3 surfaces \cite{Bridgeland}; see the remark after
theorem~\ref{thm-lattice-polarized-K3}.

Our final application is to the discriminant complements of
singularities.  The discriminant complement of a singularity is
essentially the space of all deformations of the singularity that are
deformed enough for the singularity to become smooth.  (See
section~\ref{sec-singularity-theory} for precise definitions.)  The
nature of the discriminant complement has been central to singularity
theory since Brieskorn's famous paper \cite{Brieskorn-ADE} on the
discriminants of the simple ($A_n$, $D_n$ and $E_n$) singularities.
In Arnol$'$d's famous hierarchy of hypersurface singularities, the
singularities one step more complicated than the simple ones are the
``unimodal'' singularities.  There are three kinds: simply-elliptic,
cusp and exceptional.

\begin{theorem}[=\ref{thm-ellipic-and-exceptional-aspherical}+\ref{thm-cusp-discriminant-complement-aspherical}]
\label{thm-unimodal-singularities-INTRO}
Assume conjecture~\ref{conj-Coxeter-arrangement-implies-CAT0-INTRO}. Then the discriminant complement of any
unimodal hypersurface singularity is aspherical.
\end{theorem}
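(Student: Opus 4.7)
The plan is to handle the three families separately but with a uniform strategy: identify the discriminant complement, up to taking a finite orbifold cover, with an open subset $M-\D$ of a Hermitian symmetric space $M$, where $\D$ is locally modeled on complexified mirror arrangements of finite Coxeter groups, and then apply corollary~\ref{cor-assuming-conjecture-locally-Coxeter-implies-cover-contractible-INTRO}. Because $M$ is Hermitian symmetric of noncompact type, it is nonpositively curved; and the discriminant complement has the same universal cover as $M-\D$ does.

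For the $14$ exceptional unimodal singularities I would invoke the Dolgachev--Pinkham--Looijenga period maps, which identify (after a finite orbifold cover) the $\mu$-constant stratum and its smoothings with the moduli space of amply $M$-polarized K3 surfaces for an explicit lattice $M$ of signature $(1,t)$ depending on the singularity, the discriminant corresponding precisely to the hyperplanes in $K/M^\perp$ coming from $(-2)$-vectors. Since this hyperplane arrangement is locally the mirror arrangement of a finite Coxeter group (generated by the $(-2)$-reflections in any stabilizer), theorem~\ref{thm-lattice-polarized-K3-INTRO} applies directly and gives contractibility of the orbifold universal cover, hence asphericity of the discriminant complement itself. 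Arnol$'$d's strange duality and Looijenga's explicit descriptions make the lattice $M$ and the mirror structure concrete case-by-case, which is the computational substance here.

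For the simply-elliptic singularities $\tilde E_6,\tilde E_7,\tilde E_8$ the relevant period domain is one-dimensional (the upper half-plane, parametrizing the $j$-invariant of the exceptional elliptic curve) together with a complementary factor parametrizing the modulus. I would show that, after passing to a finite cover, the discriminant complement fibers over a punctured disk or annulus with contractible fibers, and argue directly, or alternatively realize it as a $M-\D$ with $M$ the appropriate Hermitian symmetric space of type IV associated to the transcendental lattice of the minimal resolution and $\D$ the Coxeter-type mirror arrangement coming from vanishing cycles. This is essentially the easiest case, since the local Coxeter structure is of type $A_1$ (or products of $A_1$'s) coming from the ordinary double points in nearby fibers.

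The cusp singularities are the genuine obstacle. Here I would use the Inoue--Hirzebruch and Looijenga description via tube domains: the deformation space of a cusp singularity is locally modeled on a tube domain $\R^{t}+i\,C$ for an open self-dual cone $C\sset\R^t$ determined by the dual cusp, and the discriminant is the union of hyperplanes coming from $(-2)$-classes on the dual anticanonical cycle. This tube domain embeds in a type IV domain, and the $(-2)$-hyperplane configuration is again locally Coxeter; the arithmetic group acting is the automorphism group of the dual cusp data. The challenge is twofold: first, to show that the completion of this tube-domain quotient realizes the $\mu$-constant smoothing component correctly (this uses Looijenga's smoothability theorem and the fact that cusp and dual cusp are interchanged by smoothing); second, to verify that the ambient manifold meets the hypotheses of corollary~\ref{cor-assuming-conjecture-locally-Coxeter-implies-cover-contractible-INTRO}, in particular that the hyperplane arrangement is locally modeled on a \emph{finite} Coxeter mirror arrangement rather than an affine or hyperbolic one. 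This latter point is the main obstacle I expect: a priori the $(-2)$-hyperplanes in a tube domain can accumulate, and one must check that locally, near any interior point of the relevant domain, only finitely many hyperplanes meet any compact set and that their stabilizers are finite Coxeter groups. Granting this, corollary~\ref{cor-assuming-conjecture-locally-Coxeter-implies-cover-contractible-INTRO} gives asphericity, completing the proof.
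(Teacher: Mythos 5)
Your high-level strategy (reduce to a hyperplane complement in a nonpositively-curved space and apply corollary~\ref{cor-assuming-conjecture-locally-Coxeter-implies-cover-contractible-INTRO}) is the right one, and your treatment of cusp singularities is close to the paper's actual argument. But there is a substantial gap in your treatment of the exceptional singularities, and you have skipped a reduction step that the paper uses throughout.

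The serious gap concerns the exceptional (triangle) singularities. You claim the discriminant corresponds precisely to hyperplanes coming from $(-2)$-vectors and that theorem~\ref{thm-lattice-polarized-K3-INTRO} applies directly. This is not so. The period map (after Pinkham's $\C^*$-trick) identifies the discriminant complement with a $\C^*$-bundle over $P\Omega(V)-\H$ where $\H=\H_r\cup\H_\infty$: $\H_r$ is indeed the reflection arrangement of norm $-2$ vectors, but $\H_\infty$ is an additional arrangement coming from the compactifying divisor (Looijenga's set $\E$ of vectors $y$ with $y^2=-2$, $y\cdot e=\pm1$ for two non-core end roots of $Y_{p,q,r}$, $y\perp$ the others). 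This second arrangement is not a priori a Coxeter mirror arrangement, and dealing with it is the bulk of the paper's lemma~\ref{lem-exceptional-case-lemma}: one has to prove (i) components of $\H_\infty$ of the same type are disjoint, so that a component of $\H_r$ can only meet a component of $\H_\infty$ orthogonally, (ii) this orthogonality plus lemma~\ref{lem-can-break-arrangement-into-sub-arrangements} reduces the problem to $\H_r$ and $\H_\infty$ separately, and (iii) $\H_\infty$ by itself is locally Coxeter — which requires a lattice computation showing two meeting components of $\H_\infty$ always bring along a third, in an $A_2$ pattern (this only arises when $p=q=r=4$). Without this analysis, one cannot apply the corollary, so your exceptional case as written fails.

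You also omit Wirthmüller's topological triviality theorem, which the paper uses to cut the base $S$ of the semiuniversal deformation down to the hypersurface $T$ where the Hessian coordinate vanishes; this is what puts one on the right footing to invoke Looijenga's descriptions at all. Your simply-elliptic argument (fibering over the $j$-line) is a different route than the paper's, which instead uses Looijenga's result that (after the $T$-reduction) the cover is $(H-\D_H)\times\C^*$ with $H=L\otimes\C$ a flat Euclidean space and $\D_H$ the mirror arrangement of a double affine Weyl group $(L\otimes\Lambda)\rtimes W$; that version is cleaner because it never leaves the Euclidean setting where corollary~\ref{cor-assuming-conjecture-locally-Coxeter-implies-cover-contractible-INTRO} is easiest to apply. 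Finally, for cusps you correctly identify the tube-domain picture and correctly flag the finite-stabilizer issue; Looijenga's (2.17) handles that, and the missing precision is that the ambient complexified Tits cone $\Omega_d$ itself carries no nonpositively-curved metric — the paper restricts to the subdomain $\Omega_d'$ over the positive cone, which is a bona fide type~IV symmetric space, and shows $\Omega_d-\H$ deformation retracts onto $\Omega_d'-\H$, so the corollary applies there.
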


The corresponding theorem for simple singularities is due to Deligne
\cite{Deligne} and we use his result in our proof.  We also rely on a
great deal of work by Looijenga: his work provides descriptions of the
discriminant complements to which our techniques can be adapted.  One
interesting connection to the Arnol$'$d-Pham-Thom conjecture is that
the Tits cone of the $Y_{p,q,r}$ Coxeter group plays a central role in
the treatment of cusp singularities.  (This also requires a small
modification to our basic method, because the Tits cone doesn't come
with a natural metric.)

Our methods apply to many singularities other than the unimodal ones,
but we have confined discussion of these to a few remarks in
section~\ref{sec-singularity-theory}.  An interesting twist that comes
up in the case of ``triangle singularities'' is that the relevant
hyperplane arrangements in $P\Omega$ need not be locally modeled on
those of finite Coxeter groups.  However, it seems likely that our
methods still apply.  See the remark after
lemma~\ref{lem-exceptional-case-lemma}.

The paper is organized as follows.  Section~\ref{sec-conventions}
gives background on \CATx\ geometry.  Sections \ref{sec-cat-0}
and~\ref{sec-positive-curvature-case} give the \CATx\ extension
theorem in the $\x\leq0$ and $\x>0$ cases respectively.  The proof
of the $\x\leq0$ case is quite complicated.  It could be simplified if
one is willing to make more assumptions, such as extendibility of
geodesics, or that distinct geodesics have distinct directions in the
tangent space.  The $\x>0$ case follows quite easily from the
$\x\leq0$ case.

In sections \ref{sec-riem-mflds-local}--\ref{sec-Riemannian-global} we
treat the local and global properties of branched covers of Riemannian
manifolds.  
In section~\ref{sec-Coxeter-arrangements} we discuss our
conjecture about finite
Coxeter groups, and treat the Arnol$'$d-Pham-Thom
conjecture and the asphericity of moduli spaces of amply
lattice-polarized K3s.  Section~\ref{sec-singularity-theory}, on
singularity theory, is the longest and has a different flavor from
the earlier sections.  This is because of the complexity of the
discriminants.  Even Looijenga's elegant
descriptions of them require some work before we can apply our machinery.
We have also given more detail than strictly necessary in hope of
inspiring those who work on Artin groups to look also at the
fundamental groups of these discriminant complements.  These groups
are like Artin groups, but different and perhaps ``better''.  See
\cite{van-der-Lek} for presentations in the simply elliptic and cusp cases.

I would like to thank the Japan Society for the Promotion of Science,
the Clay Mathematics Institute and Kyoto University for their support
and hospitality during part of this work.  I am very grateful to
E.~Looijenga for helping me understand some of the singularity theory
and to K.~Wirthm\"uller for unbinding and scanning his dissertation in
order to send it to me.

\section{Background and Conventions}
\label{sec-conventions}

\noindent
For experts we summarize our nonstandard conventions as
follows.  (1) Geodesics need only be parameterized proportionally to
arclength, not necessarily by arclength.  (2) We use the ``strong''
definition of the \CATx\ inequality, and allow triangles with local
geodesics as edges.  (3)
Lemma~\ref{lem-deformation-of-local-geodesics} slightly refines
standard results on deformations of geodesics.  (4) Our formulation of
the tangent space is equivalent to other treatments but the phrasing
may be new to some.

Let \notation{$(X,d)$} be a metric space. \notation{$B_R(x)$} denotes the open
$R$-ball about $x$.  When we speak of points being \defn{within} some
$\epsilon>0$, we mean that their distance is less than $\epsilon$.

If $\gamma$ is a continuous function from an interval $I=[a,b]$ to $X$,
then we call $\gamma$ a \defn{path} from its \defn{initial endpoint}
$\gamma(a)$ to its \defn{final endpoint} $\gamma(b)$.  Its \defn{length}
\notation{$\ell(\gamma)$} is
$$
\ell(\gamma):=\sup\sum_{i=1}^n
d\bigl(\gamma(t_{i-1}),\gamma(t_i)\bigr)
\in[0,\infty]
$$ where the supremum is over all finite sequences $a=t_0\leq\dots\leq
t_n=b$.  We say $\gamma$ has \defn{speed}${}\leq\sigma$ if the
restriction of $\gamma$ to any subinterval has length at most $\sigma$
times the length of the subinterval.  
$X$ is
called a \defn{length space} and $d$ a \defn{path metric} if the
distance between any two points is the infimum of the lengths of the
paths joining them.

We call $\gamma$ a \defn{geodesic} of speed $\sigma\geq0$ if
$d\bigl(\gamma(s),\gamma(t)\bigr)=\sigma\cdot d(s,t)$ for all $s,t\in I$.  We
call $\gamma$ a \defn{local geodesic} of speed $\sigma$ if this holds
locally on $I$.  We call $\gamma$ a geodesic if it is a geodesic of some
speed, and similarly for local geodesics.  $X$ is  a
\defn{geodesic space} if any two of its points can be joined by a
geodesic.  Often it is convenient to
be sloppy and forget the parameterization, identifying a geodesic or
even a local geodesic with its image.  When there is no
ambiguity about which local geodesic is intended, we will often
indicate it by specifying its endpoints, for example
\notation{$\overline{xy}$}.  A subset $C\sset X$ is called
\defn{convex} if any two of its points may be joined by a geodesic in
$X$ and every geodesic joining them lies in $C$.  

A \defn{triangle} $T$ in $X$ with vertices $x,y,z$ means a choice of
local geo\discretionary{-}{-}{}desics $\overline{xy}$,
$\overline{yz}$, $\overline{zx}$ joining them in pairs.  We call these
the \vardefn{edges}{edge} of $T$, and call $T$ a \defn{geodesic triangle} if
they are geodesics.  Most references discuss only geodesic triangles,
but at times we will be trying to show that a given local geodesic is
actually a geodesic, and the more general formulation will be useful.
When there is no ambiguity about which edges
are intended, we will sometimes specify $T$ by  naming its
vertices.  

Now suppose \nonotation{$\x$}$\x\leq0$ and let
\notation{$X_\x$} be the complete connected simply-connected surface
of constant curvature $\x$---i.e., the Euclidean or hyperbolic plane
if $\x=0$ or $-1$.  A \defn{comparison triangle} $T'$ for $T$ means a
geodesic triangle in $X_\x$ whose edges $\overline{x'y'}$,
$\overline{y'z'}$ and $\overline{z'x'}$ have the same lengths as those
of $T$.  A comparison triangle exists if and only if the lengths of
$T$'s edges satisfy the triangle inequality, and in this case $T'$ is
unique up to isometry of $X_\x$.  In particular, every geodesic
triangle has a comparison triangle.

Suppose that $T$ has a comparison triangle.  Then
to each point of an edge of $T'$, there is a corresponding point on
the corresponding edge of $T$.  (This correspondence is usually
formulated in the other direction; we do it this way since the
edges of $T$ may cross themselves.)  We say that $T$ satisfies
the \defn{\CATx\  inequality} if for any two edges of $T'$ and
points $v'$, $w'$ on them, the inequality
\begin{equation}
\label{eq-CAT-chi-inequality}
d_X(v,w)\leq d_{X_\x}(v',w')
\end{equation}
holds, where $v$ and $w$ are the corresponding points of the
corresponding edges of $T$.  We say $X$ is a \defn{\CATx\ space} if it
is geodesic and every geodesic triangle satisfies this inequality.  We
say $X$ is \defn{locally \CATx}, or has \defn{curvature${}\leq\x$}, if
each point has a neighborhood which is \CATx.  

This definition is sometimes called the ``strong'' form of the
\CATx\ inequality, because some treatments restrict one of $v$, $w$ to
be a vertex of the triangle.  It is well-known that these notions are
equivalent \cite[ch.~3]{Ghys-de-la-Harpe}.  We prefer the strong form
because it admits the Alexandrov subdivision lemma.

The Cartan-Hadamard
theorem for \CATx\ spaces asserts that if $X$ is complete, simply
connected and locally \CATx\ then it is \CATx, hence contractible
\cite[chap. II.4]{bridson-haefliger}.  This is the main reason we care
about \CATx\ spaces.

It would be too much to ask for all triangles to behave like geodesic
triangles.  But Alexandrov's subdivision lemma still holds, with the
same proof.  See \cite[lemma~I.2.16]{bridson-haefliger}.

\begin{lemma}[Alexandrov Subdivision]
\label{lem-Alexandrov-subdivision}
Let $T$ be a triangle in a metric space, with vertices $x$, $y$ and
$z$.  Suppose $w$ is a point of $\overline{yz}$, that $\overline{xw}$
is a local geodesic, and that both triangles $xwy$ and $xwz$ have
comparison triangles and satisfy \CATx.  Then $T$ also has a
comparison triangle and satisfies \CATx.  \qed
\end{lemma}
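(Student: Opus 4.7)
The plan follows the classical Alexandrov-style argument (essentially Lemma~I.2.16 of Bridson--Haefliger), with only a mild adaptation since our triangle's edges are merely local geodesics. Denote $m=\ell(\overline{xw})$, $p=\ell(\overline{yw})$, $q=\ell(\overline{wz})$, $a=\ell(\overline{xy})$, $c=\ell(\overline{xz})$, and note that $\ell(\overline{yz})=p+q$ because $w$ lies on the local geodesic $\overline{yz}$. First I would construct the two comparison triangles $T_1'=x'w'y'$ for $xwy$ and $T_2'=x'w'z'$ for $xwz$ in $X_\x$, and glue them along their shared edge $\overline{x'w'}$ of length $m$ with $y'$ and $z'$ placed on opposite sides; call the resulting figure $G$, and let $\tilde\alpha,\tilde\beta$ be the two comparison angles at $w'$.

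The first key step is to show $\tilde\alpha+\tilde\beta\geq\pi$. Since the sub-triangles satisfy \CATx, the Alexandrov upper angles $\hat\alpha$ between $\overline{wx}$ and $\overline{wy}$, and $\hat\beta$ between $\overline{wx}$ and $\overline{wz}$, are bounded by the corresponding comparison angles: $\hat\alpha\leq\tilde\alpha$ and $\hat\beta\leq\tilde\beta$. Because $\overline{yz}$ is a local geodesic through $w$, the outgoing directions $\overline{wy}$ and $\overline{wz}$ make Alexandrov angle exactly $\pi$ at $w$, so the triangle inequality for Alexandrov angles at a point forces $\hat\alpha+\hat\beta\geq\pi$; combining these inequalities gives $\tilde\alpha+\tilde\beta\geq\pi$.

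Next, using $\tilde\alpha+\tilde\beta\geq\pi$, I would construct the comparison triangle $T'=XYZ$ for $T$ in $X_\x$. A straightening argument (decrease whichever of $\tilde\alpha,\tilde\beta$ is larger until the sum equals exactly $\pi$, with $m,p,q$ fixed; this shrinks $a$ and $c$) gives $a+c\geq p+q=b$, and the other two triangle inequalities follow immediately from the sub-triangle inequalities. Let $W^*\in\overline{YZ}$ be the point with $|YW^*|=p$. Alexandrov's geometric monotonicity lemma---a direct law-of-cosines computation in $X_\x$---then yields $|XW^*|\geq m$, and more generally shows that the natural edge-arclength correspondence $G\to T'$ (matching $x',y',z',w'$ to $X,Y,Z,W^*$ and the edges of $T$ by arclength) is distance non-decreasing: for any two corresponding points $u'\leftrightarrow U$, $v'\leftrightarrow V$, one has $d_G(u',v')\leq d_{T'}(U,V)$, where $d_G$ is the intrinsic (shortest-path) metric on the glued figure.

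Finally I would verify \CATx\ for $T$. For any $u,v$ on edges of $T$, the intrinsic geodesic in $G$ from $u'$ to $v'$ either stays inside a single sub-triangle (when $u,v$ are in the same sub-triangle) or crosses the shared edge $\overline{x'w'}$ at a single point $p_0'$ (when in different sub-triangles). Applying \CATx\ of each sub-triangle to the portion(s) of the geodesic inside it, and summing in the second case, yields $d_X(u,v)\leq d_G(u',v')$; combining with the monotonicity of the previous paragraph gives $d_X(u,v)\leq d_{T'}(U,V)$, which is the \CATx\ inequality. The main obstacle is the geometric monotonicity invoked in step three; although a standard constant-curvature plane fact, it requires careful law-of-cosines bookkeeping (especially when the comparison points are interior to edges rather than vertices) to handle all the sub-cases.
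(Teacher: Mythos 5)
The paper gives no proof of this lemma at all: it asserts (with a terminal \qed) that the classical argument of Bridson--Haefliger carries over to local-geodesic edges, citing \cite[lemma~I.2.16]{bridson-haefliger}. Your proposal reconstructs exactly that argument and is essentially correct. In particular, your use of the Alexandrov angle $\pi$ between the two arcs $\overline{wy},\overline{wz}$ of the local geodesic $\overline{yz}$, combined with the angle triangle inequality at $w$, is a clean way to get $\tilde\alpha+\tilde\beta\geq\pi$; this is precisely where the ``local geodesic'' hypothesis enters, and your step four correctly uses the paper's strong form of \CATx\ (comparing the two edge points of a sub-triangle flanking the crossing of $\overline{x'w'}$), which is the whole reason the paper adopts the strong definition.

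The one soft spot is the assertion that the boundary arc-length correspondence $G\to T'$ is distance non-decreasing for the intrinsic metric $d_G$. That claim is true, but it is strictly more than the usual statement of Alexandrov's Lemma (which gives $|XW^*|\geq m$ and the vertex-angle inequalities), and calling it ``a direct law-of-cosines computation'' undersells the bookkeeping. A cleaner route than case-by-case trigonometry: $G$ is \CATx\ by Reshetnyak's gluing theorem, being two convex regions of $X_\x$ glued along the geodesic $[\bar x,\bar w]$; the inequality $\tilde\alpha+\tilde\beta\geq\pi$ says exactly that $[\bar y,\bar w]\cup[\bar w,\bar z]$ is a geodesic of $G$; hence $T'$ is \emph{the} comparison triangle for the geodesic triangle $\bar x\bar y\bar z$ in the \CATx\ space $G$, and the (strong) \CATx\ inequality in $G$ is precisely the monotonicity $d_G(u',v')\leq d_{T'}(U,V)$ you need. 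With that substitution your argument closes without remainder, including the triangle-inequality checks you made for the existence of $T'$.
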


The following is a slight extension of standard results about
deforming local geodesics.

\begin{lemma}
\label{lem-deformation-of-local-geodesics}
Let $X$ be a metric space, $x,y\in X$, and $\gamma:[0,1]\to X$ a local
geodesic from $x$ to $y$.  Suppose $R>0$ is such that the closed
$R$-ball around each point of $\gamma$ is complete and \CATx.  Then
\begin{enumerate}
\item
\label{item-unique-local-geodesic-uniformly-close}
for all $x_0\in B_{R/2}(x)$ and $y_0\in B_{R/2}(y)$, there exists a
unique local geodesic $\gamma_0$ from $x_0$ to $y_0$ that is
uniformly within $R/2$ of $\gamma$;
\item
\label{item-uniformly-close-local-geodesic-minimizes-length}
$\gamma_0$ minimizes 
length among all paths from $x_0$ to $y_0$ that are uniformly within $R/2$
of $\gamma$;
\item
\label{item-uniformly-close-local-geodesics-are-convex-to-each-other}
if $x_1\in B_{R/2}(x)$ and
$y_1\in B_{R/2}(y)$, then $t\mapsto d(\gamma_0(t),\gamma_1(t))$ is convex.   In particular,
$d(\gamma_0(t),\gamma_1(t))\leq\max\{d(x_0,x_1),d(y_0,y_1)\}$ for all $t$.
\end{enumerate}
Also, suppose $0\leq a\leq b\leq c\leq 1$ and that $T$ is a
triangle whose edges are uniformly within $R/2$ of the restrictions 
of $\gamma$ to $[a,b]$, $[b,c]$ and $[a,c]$.  Then $T$ admits a comparison
triangle and satisfies \CATx.
\end{lemma}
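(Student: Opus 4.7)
The strategy is to cover the compact image $\gamma([0,1])$ by finitely many \CATx\ balls provided by the hypothesis and reduce everything to standard \CATx\ geometry (unique short geodesics, convexity of the distance function along geodesics, and Alexandrov subdivision). Specifically, by uniform continuity of $\gamma$ I would choose a subdivision $0=t_0<t_1<\cdots<t_n=1$ fine enough that each arc $\gamma([t_{i-1},t_{i+1}])$ (with $t_{-1}:=0$ and $t_{n+1}:=1$) lies in the open ball $B_{R/4}(\gamma(t_i))$. Write $B_i$ for the closed $R$-ball about $\gamma(t_i)$, which is complete and \CATx\ by hypothesis. In particular $\gamma|_{[t_{i-1},t_i]}$ is a short local geodesic inside $B_i$, hence a minimizing geodesic segment, and any two points of $B_{R/2}(\gamma(t_i))$ are joined inside $B_i$ by a unique geodesic that depends convexly on its endpoints.

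For the existence half of (a) and then (b), given $x_0\in B_{R/2}(x)$ and $y_0\in B_{R/2}(y)$ I build $\gamma_0$ by iterative straightening. Start with a broken geodesic $x_0\to z_1\to\cdots\to z_{n-1}\to y_0$ whose legs are the unique short geodesics in the appropriate $B_i$, beginning with $z_i=\gamma(t_i)$. Then iteratively replace each $z_i$ by the point at the correct interior parameter on the geodesic in $B_i$ from the current $z_{i-1}$ to $z_{i+1}$. \CATx\ convexity of the geodesic-midpoint map makes this iteration contracting on the product of small balls around the $\gamma(t_i)$, so it has a fixed point: a broken geodesic whose internal breaks lie on the connecting geodesics, i.e.\ a local geodesic $\gamma_0$ uniformly within $R/2$ of $\gamma$. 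Uniqueness: any rival local geodesic $\tilde\gamma$ uniformly within $R/2$ of $\gamma$ has $\tilde\gamma(t_i)\in B_{R/2}(\gamma(t_i))$ and restricts to a short local geodesic in each $B_i$, which must coincide with the unique \CATx\ geodesic between its endpoints; an induction on $i$ pins everything down. For (b), any competitor path within $R/2$ of $\gamma$ splits across the $t_i$ into arcs lying in the $B_i$, each no shorter than the corresponding chord, and summing yields length at least $\ell(\gamma_0)$.

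For (c), on each $[t_{i-1},t_i]$ the restrictions of $\gamma_0$ and $\gamma_1$ are short geodesics in $B_i$, so $t\mapsto d(\gamma_0(t),\gamma_1(t))$ is convex there by \CATx. To promote this piecewise convexity to global convexity I would choose a second subdivision whose breakpoints strictly interlace the first; each original $t_i$ then lies interior to one of the new subintervals, so the function is convex on a neighborhood of every $t_i$, and continuity promotes this to global convexity. The stated maximum bound is the usual consequence of convexity with boundary values $d(x_0,x_1),d(y_0,y_1)$. For the final triangle assertion, the three edges lie in a common tube around $\gamma|_{[a,c]}$; by (b) their lengths differ from the additive lengths $\ell(\gamma|_{[a,b]})+\ell(\gamma|_{[b,c]})=\ell(\gamma|_{[a,c]})$ by at most a bounded amount controlled by $R$, so the triangle inequality survives and a comparison triangle exists. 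The \CATx\ inequality for $T$ is verified by iterated Alexandrov subdivision (lemma~\ref{lem-Alexandrov-subdivision}): slice $T$ by local geodesics from one vertex to the points of its opposite edge corresponding to the $t_i$ (existence and uniqueness given by part (a) applied to appropriate sub-arcs of $\gamma$) until every sub-triangle sits inside a single $B_i$, where \CATx\ is immediate, and then reassemble.

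The main obstacle is the convergence and uniqueness of the straightening iteration; making the contraction estimate rigorous requires careful use of the \CATx\ convexity of the midpoint map on each $B_i$ together with bookkeeping for the overlaps of consecutive balls. The rest is routine \CATx\ geometry.
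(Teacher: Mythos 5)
Your approach is genuinely different from the paper's, and is worth comparing.  The paper outsources the hard part---the existence of $\gamma_0$---to the literature (\cite[lemma~II.4.3]{bridson-haefliger} and \cite[thm.~2]{alexander-bishop-CH-thm} are cited, which directly give a local geodesic from $x_0$ to $y_0$ for which $t\mapsto d(\gamma(t),\gamma_0(t))$ is convex), and then deduces uniqueness, \eqref{item-uniformly-close-local-geodesics-are-convex-to-each-other}, \eqref{item-uniformly-close-local-geodesic-minimizes-length}, and the final triangle claim from repeated use of long thin triangles and the Alexandrov lemma.  In particular, uniqueness and \eqref{item-uniformly-close-local-geodesics-are-convex-to-each-other} are obtained in one stroke by cutting the quadrilateral $\beta_0(0),\beta_0(1),\beta_1(0),\beta_1(1)$ into two long thin triangles, while you obtain the convexity piecewise on a subdivision and patch with an interlacing subdivision.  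Your iterative-straightening construction of $\gamma_0$ is self-contained, which is an appealing alternative.  The trade-off is that the contraction claim is exactly where the difficulty lives: in a \CAT0 space the ``replace interior vertices by points on the connecting geodesics'' map is in general only nonexpansive (it is $1$-Lipschitz by \eqref{item-uniformly-close-local-geodesics-are-convex-to-each-other} applied to short geodesics), not a strict contraction, so a Banach fixed-point argument does not immediately apply.  One typically needs a monotone-energy plus uniform-convexity argument, or a careful Gauss--Seidel-style estimate, and this is essentially the content of the cited Bridson--Haefliger/Alexander--Bishop results.  You acknowledge this as the main obstacle, and you are right; it is not a routine bookkeeping issue.

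There is also a concrete gap in your argument for \eqref{item-uniformly-close-local-geodesic-minimizes-length}.  You write that a competitor path $\alpha$ splits into arcs on $[t_{i-1},t_i]$, ``each no shorter than the corresponding chord, and summing yields length at least $\ell(\gamma_0)$.''  But the chords in question join $\alpha(t_{i-1})$ to $\alpha(t_i)$, not $\gamma_0(t_{i-1})$ to $\gamma_0(t_i)$; since $\alpha$ and $\gamma_0$ share only the endpoints at $t=0,1$, this comparison bounds $\ell(\alpha)$ below by its own chord sum, not by $\ell(\gamma_0)$.  The missing step is to pass from the broken chord path through $\alpha(t_0),\dots,\alpha(t_n)$ to $\gamma_0$ via the straightening iteration and observe that each straightening step is length-nonincreasing (replacing $z_{i-1}\to z_i\to z_{i+1}$ by $z_{i-1}\to z_i'\to z_{i+1}$ with $z_i'$ on $\overline{z_{i-1}z_{i+1}}$ decreases the two-leg length to $d(z_{i-1},z_{i+1})$).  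Then $\ell(\gamma_0)\le\ell(\text{broken chord path})\le\ell(\alpha)$.  Your sketch of the final triangle claim also cuts only from a single vertex, which produces sub-triangles that are still as long as $T$ and do not sit in a single $B_i$; the needed subdivision is the fan/zig-zag pattern the paper indicates in its figure, which reduces all the way down to triangles with all three vertices in one ball before \CATx\ can be invoked.
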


\begin{proof}[Proof sketch.]
The final claim is a consequence of
repeated use of Alex\-androv's lemma.  One should think of $T$ as a ``long
thin triangle'' and subdivide it as
suggested by the following figure:
\begin{center}
\psset{unit=.1mm}
\begin{pspicture}(0,-35)(1000,120)
\def\dotsize{7}
\qline(0 ,0)(1000,00)
\rput[r](0,0){$x\,\,$}
\rput[t](200,-10){$\gamma(a)$}
\rput[t](700,-10){$\gamma(b)$}
\rput[t](900,-10){$\gamma(c)$}
\rput[l](1000,0){$\,\,y$}
\qdisk(0,0){\dotsize}
\qdisk(200,0){\dotsize}
\qdisk(700,0){\dotsize}
\qdisk(900,0){\dotsize}
\qdisk(1000,0){\dotsize}
\newcount\tx
\newcount\ty
\newcount\bx
\newcount\by
\def\LEFTx{210}
\def\LEFTy{20}
\qdisk(\LEFTx,\LEFTy){\dotsize}
\def\dx{48}
\def\dy{8}
\def\ex{54}
\tx=\LEFTx
\ty=\LEFTy
\bx=\LEFTx
\by=\LEFTy
\advance\tx by\dx
\advance\ty by\dy
\advance\bx by\ex
\qline(\tx,\ty)(\bx,\by)
\advance\tx by\dx
\advance\ty by\dy
\qline(\tx,\ty)(\bx,\by)
\advance\bx by\ex
\qline(\tx,\ty)(\bx,\by)
\advance\tx by\dx
\advance\ty by\dy
\qline(\tx,\ty)(\bx,\by)
\advance\bx by\ex
\qline(\tx,\ty)(\bx,\by)
\advance\tx by\dx
\advance\ty by\dy
\qline(\tx,\ty)(\bx,\by)
\advance\bx by\ex
\qline(\tx,\ty)(\bx,\by)
\advance\tx by\dx
\advance\ty by\dy
\qline(\tx,\ty)(\bx,\by)
\advance\bx by\ex
\qline(\tx,\ty)(\bx,\by)
\advance\tx by\dx
\advance\ty by\dy
\qline(\tx,\ty)(\bx,\by)
\advance\bx by\ex
\qline(\tx,\ty)(\bx,\by)
\advance\tx by\dx
\advance\ty by\dy
\qline(\tx,\ty)(\bx,\by)
\advance\bx by\ex
\qline(\tx,\ty)(\bx,\by)
\advance\tx by\dx
\advance\ty by\dy
\qline(\tx,\ty)(\bx,\by)
\advance\bx by\ex
\qline(\tx,\ty)(\bx,\by)
\advance\tx by\dx
\advance\ty by\dy
\qline(\tx,\ty)(\bx,\by)
\advance\bx by\ex
\qline(\tx,\ty)(\bx,\by)
\advance\tx by\dx
\advance\ty by\dy
\qline(\tx,\ty)(\bx,\by)
\newcount\topx
\newcount\topy
\topx=\tx
\topy=\ty
\qline(\LEFTx,\LEFTy)(\topx,\topy)
\qdisk(\topx,\topy){\dotsize}
\qline(\tx,\ty)(\bx,\by)
\def\dx{32}
\def\dy{-12}
\def\ex{35}
\advance\tx by\dx
\advance\ty by\dy
\qline(\tx,\ty)(\bx,\by)
\advance\bx by\ex
\qline(\tx,\ty)(\bx,\by)
\advance\tx by\dx
\advance\ty by\dy
\qline(\tx,\ty)(\bx,\by)
\advance\bx by\ex
\qline(\tx,\ty)(\bx,\by)
\advance\tx by\dx
\advance\ty by\dy
\qline(\tx,\ty)(\bx,\by)
\advance\bx by\ex
\qline(\tx,\ty)(\bx,\by)
\advance\tx by\dx
\advance\ty by\dy
\qline(\tx,\ty)(\bx,\by)
\advance\bx by\ex
\qline(\tx,\ty)(\bx,\by)
\advance\tx by\dx
\advance\ty by\dy
\qline(\tx,\ty)(\bx,\by)
\advance\bx by\ex
\qline(\tx,\ty)(\bx,\by)
\advance\bx by\ex
\qline(\LEFTx,\LEFTy)(\bx,\by)
\qline(\topx,\topy)(\bx,\by)
\qdisk(\bx,\by){\dotsize}
\end{pspicture}
\end{center}

The hard part of the lemma is the existence of $\gamma_0$, which is
proven in \cite[lemma II.4.3]{bridson-haefliger} and in \cite[thm.~2]{alexander-bishop-CH-thm}.  Specifically, if $x_0\in B_{R/2}(x)$
and $y_0\in B_{R/2}(y)$ then there is a unique local geodesic
$\gamma_0:[0,1]\to X$ from $x_0$ to $y_0$ for which the function $t\mapsto
d(\gamma(t),\gamma_0(t))$ is convex.  This implies existence in
\eqref{item-unique-local-geodesic-uniformly-close}. 

Now suppose $\beta_0$ and $\beta_1$ are any two local geodesics that are
uniformly within $R/2$ of $\gamma$.  By cutting the quadrilateral with
vertices $\beta_0(0)$, $\beta_0(1)$, $\beta_1(0)$ and $\beta_1(1)$ into two long
thin triangles one can show that $t\mapsto d(\beta_0(t),\beta_1(t))$ is
convex.  The uniqueness part of
\eqref{item-unique-local-geodesic-uniformly-close} follows.  To prove
\eqref{item-uniformly-close-local-geodesics-are-convex-to-each-other}
one just takes $\beta_i=\gamma_i$.  The
length-minimizing property of $\gamma_0$ can also be proven by using long
thin triangles.
\end{proof}

A \defn{geodesic-germ} at $x\in X$ means an equivalence class of
geodesics $[0,\epsilon{>}0]\to X$ with initial endpoint $x$, where two such
are equivalent if they coincide as functions on a neighborhood of $0$.
The constant geodesic at $x$ is allowed.  The following function
\notation{$D$} on pairs of germs is symmetric and satisfies the
triangle inequality:
\begin{equation}
\label{eq-defn-of-distance-in-tangent-space}
D(\gamma,\gamma'):=\limsup_{t\to 0}
\frac{d\bigl(\gamma(t),\gamma'(t)\bigr)}{t}
\in[0,\infty).
\end{equation}
For example, if $\gamma$ and $\gamma'$ differ only by reparameterization, then
their $D$-distance is the difference between their speeds.  If we identify
$\gamma$ and $\gamma'$ when $D(\gamma,\gamma')=0$, then the set of equivalence classes
forms a metric space, called the \defn{tangent space}
\notation{$T_xX$} at $x$.  The basic properties of tangent spaces are
developed in \cite[II3.18--22]{bridson-haefliger}.  That formulation is
slightly different, and uses the term tangent cone rather than tangent
space.  But it is easy to convert between our approach and theirs.

Positive real numbers act on $T_xX$ by scaling the speeds of
geodesic-germs; this scales the metric in the obvious way.  If $X$ is
a Riemannian manifold then $T_xX$ is the usual tangent space, with its
Euclidean metric and the standard scaling.  We give some other
interesting tangent spaces in examples~\ref{eg-cusped-cone}
and~\ref{eg-half-plane-with-crushed-boundary}.

\section{The \texorpdfstring{\CATx}{CAT(chi)} extension theorem}
\label{sec-cat-0}

\noindent
Let $\x\leq0$ be fixed;  see section~\ref{sec-positive-curvature-case}
for the positive-curvature case.

\renewcommand{\theenumi}{\Alph{enumi}}

\begin{theorem}
\label{thm-CATx-extension}
Let \notation{$X$} be a complete geodesic space and \notation{$\D$} a nonempty closed convex
subset.  Assume also:
\begin{enumerate}
\item
\label{hyp-radial-geodesic-triangles-CATx}
every geodesic triangle with a vertex in $\D$ satisfies \CATx;
\item
\label{hyp-tangent-spaces-unique-local-geodesics}
local geodesics in the metric completion $\overline{T_cX}$ of $T_c X$
are unique, for every $c\in \D$; and
\item
\label{hyp-quantified-local-CATx}
there exists \notation{$\lambda$}${}>0$ such that for all $x\in X-\D$, the
  closed ball with center $x$ and radius $\lambda\cdot d(x,\D)$ is
  complete and \CATx.
\end{enumerate}
Then $X$ is \CATx.
\end{theorem}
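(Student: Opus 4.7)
My plan is to establish simple connectivity of $X$, then local $\CATx$ at every point, and conclude via the Cartan--Hadamard theorem for $\CATx$ spaces (valid for $\chi \le 0$) recalled in Section~\ref{sec-conventions}.

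For simple connectivity, I fix $c \in \Delta$. Applying hypothesis~\eqref{hyp-radial-geodesic-triangles-CATx} to the degenerate triangle $\triangle cxx$ whose two non-trivial edges are any two geodesics from $c$ to $x$, the comparison triangle collapses to a segment, and $\CATx$ forces the two geodesics to coincide. Hence the geodesic $\gamma_{c,x}$ from $c$ to $x$ is unique for each $x \in X$, and applying \eqref{hyp-radial-geodesic-triangles-CATx} to $\triangle cxx'$ with $x'$ close to $x$ gives continuity of the map $(x,t) \mapsto \gamma_{c,x}(t)$, yielding a deformation retraction of $X$ onto $\{c\}$. So $X$ is contractible, in particular simply connected.

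Local $\CATx$ at points $x \in X \setminus \Delta$ is immediate from \eqref{hyp-quantified-local-CATx}. The heart of the proof is local $\CATx$ at a point $c \in \Delta$: I would show that every sufficiently small geodesic triangle $T = \triangle xyz$ near $c$ satisfies $\CATx$, distinguishing three cases. If $T$ has a vertex in $\Delta$, apply \eqref{hyp-radial-geodesic-triangles-CATx}. If $T$ fits inside a single $\CATx$ ball of the form in \eqref{hyp-quantified-local-CATx}, apply that hypothesis. The remaining hybrid case, in which $T \subset X \setminus \Delta$ but its edges skim close enough to $\Delta$ that no such ball contains $T$, is the main obstacle. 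My plan for this case is to choose a point $p \in \Delta$ near $T$ (for instance $c$ itself, or a nearest point in $\Delta$ to the triangle), draw the unique radial geodesics $\overline{px}, \overline{py}, \overline{pz}$ afforded by the first step, and note that each of the radial sub-triangles $\triangle pxy$, $\triangle pyz$, $\triangle pzx$ satisfies $\CATx$ by \eqref{hyp-radial-geodesic-triangles-CATx}. I would then transfer $\CATx$ from these radial sub-triangles to $T$ itself by deforming $T$ along the radial geodesics toward $p$, using Lemma~\ref{lem-deformation-of-local-geodesics} to keep the edges local geodesics and Lemma~\ref{lem-Alexandrov-subdivision} to reassemble $\CATx$ from smaller sub-triangles that are either close to $\Delta$ (handled by \eqref{hyp-radial-geodesic-triangles-CATx}) or safely inside balls of \eqref{hyp-quantified-local-CATx}. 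The key role of \eqref{hyp-tangent-spaces-unique-local-geodesics} is to control the infinitesimal limit as $T$ shrinks toward $p$: uniqueness of local geodesics in $\overline{T_pX}$ rules out the pathological edge-branching in the tangent cone that would otherwise obstruct this transfer.

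Once local $\CATx$ is established, $X$ is complete, simply connected, and locally $\CATx$, so the Cartan--Hadamard theorem yields $\CATx$ globally. The main obstacle is clearly the hybrid case above. The author's remark in the introduction, that the argument would simplify if one assumed extendibility of geodesics or that distinct geodesics have distinct directions in the tangent space, suggests that without these my deformation-and-subdivision scheme will need a careful iterative induction, presumably on both the scale of the triangle and its distance to $\Delta$. That delicate iteration is where I expect the bulk of the work.
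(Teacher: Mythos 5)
Your overall architecture (contractibility $+$ local \CATx\ $+$ Cartan--Hadamard) differs from the paper's: the paper never invokes Cartan--Hadamard but instead reduces everything to a single key lemma --- that \emph{any} two points of $X$ are joined by a unique local geodesic --- and then runs the Alexandrov patchwork argument on every geodesic triangle. You correctly locate the difficulty in the ``hybrid'' triangles, but your sketch for handling them has concrete problems.

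First, the three triangles $\triangle pxy$, $\triangle pyz$, $\triangle pzx$ are \emph{not} a subdivision of $T$; the cevian lemma~\ref{lem-Alexandrov-subdivision} applies only when the auxiliary point lies \emph{on an edge} of $T$, and in the hybrid case $p\in\D$ cannot lie on an edge of $T\sset X-\D$. Knowing \CATx\ for the three radial triangles therefore does not propagate to $T$ by any off-the-shelf gluing result. Second, you suggest that sub-triangles ``close to $\D$'' are ``handled by~\eqref{hyp-radial-geodesic-triangles-CATx},'' but that hypothesis requires a vertex \emph{in} $\D$, not merely near it; for triangles that approach $\D$ without touching it, \eqref{hyp-radial-geodesic-triangles-CATx} says nothing, and this is precisely the gap illustrated by example~\ref{eg-cusped-cone}. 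Third, ``deforming $T$ along the radial geodesics toward $p$'' produces local-geodesic triangles $T_t$ near $p$, but there is no principle that lets you transfer the \CATx\ inequality from $T_t$ (small $t$) back to $T_1=T$; the inequality is not preserved under deformation through local geodesics. The paper's way around this is not to transfer a \CATx\ inequality but to transfer a \emph{coincidence}: if $\beta,\beta'$ are two local geodesics from $x$ to $y$, the rescaled deformations converge in $\overline{T_cX}$ to local geodesics that must coincide by~\eqref{hyp-tangent-spaces-unique-local-geodesics}, and lemma~\ref{lem-deformation-of-local-geodesics} then forces $\beta_{\mu t}=\beta'_{\mu t}$ at a definite finite scale, hence $\beta=\beta'$. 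Here hypothesis~\eqref{hyp-quantified-local-CATx} is what guarantees that the balls one needs for lemma~\ref{lem-deformation-of-local-geodesics} shrink only linearly with the distance to $\D$, making the rescaling estimates work. Your proposal gestures at all three ingredients but supplies the argument for none, and what it does supply (subdividing by a point in $\D$, invoking~\eqref{hyp-radial-geodesic-triangles-CATx} for triangles merely near $\D$) is incorrect. Once the unique-local-geodesic lemma is in hand, the Alexandrov patchwork finish is short --- and one could indeed close with Cartan--Hadamard as you envisage --- but the lemma itself is the theorem, and your sketch does not contain a proof of it.
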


\renewcommand{\theenumi}{\alph{enumi}}

\begin{example}
\label{eg-cusped-cone}
The non-obvious condition is \eqref{hyp-quantified-local-CATx}, so we
illustrate its role.  Take $X$ to be the surface of revolution of
$y=x^2$, $x\geq0$, around the $x$-axis and $\D$ to be the cusp point.
With $\x=0$, every hypothesis except \eqref{hyp-quantified-local-CATx}
holds, yet $X$ is not \CAT0.  The key point is that the tangent space
at the cusp is a ray, but this forgets too much of the
geometry near the cusp.  An interesting twist on this example is to
take $X'$ equal to the metric completion of the universal cover of
$X-\D$.  The tangent space at the cusp is still just a ray, but now
\eqref{hyp-quantified-local-CATx} holds and $X'$ is \CAT0.
\end{example}

\begin{example}
\label{eg-half-plane-with-crushed-boundary}
This example cautions against weakening hypothesis
\eqref{hyp-radial-geodesic-triangles-CATx}.  Let $X$ be a Euclidean
half-plane with its boundary crushed to a point.  That is,
$X=\{0\}\cup\{(x,y)\in\R^2:x>0\}$, with
\begin{gather*}
d\bigl((x,y),0\bigr)=x\\
d\bigl((x_1,y_1),(x_2,y_2)\bigr)
=
\min\Bigl\{x_1+x_2,\ \sqrt{(x_2-x_1)^2+(y_2-y_1)^2}\Bigr\}
\end{gather*}
and $\D=\{0\}$.  With $\x=0$, every hypothesis holds except
\eqref{hyp-radial-geodesic-triangles-CATx}, which holds in a weaker
form: every triangle with an edge in $\D$ satisfies \CAT0.  Yet $X$ is
not \CAT0.  Here the tangent space at $0$ is the cone on an uncountable
discrete set.  In a sense this example is the opposite of the previous
one, because the metric topology on the space of geodesics emanating
from $0$ is much finer than the topology of uniform convergence,
whereas it was much coarser in example~\ref{eg-cusped-cone}.
\end{example}

The following lemma contains almost all the content of the theorem.

\begin{lemma}
\label{lem-unique-local-geodesics}
There is a unique local geodesic joining any two points of $X$.
\end{lemma}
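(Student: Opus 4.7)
The plan is a proof by contradiction: suppose two distinct local geodesics $\gamma_0,\gamma_1$ join some pair of points $p,q\in X$, and let $t_0:=\sup\{t:\gamma_0|_{[0,t]}=\gamma_1|_{[0,t]}\}$, which is attained by continuity; write $r:=\gamma_0(t_0)=\gamma_1(t_0)$. Everything reduces to extending the agreement slightly past $t_0$, contradicting its maximality (the possibility that $t_0$ already equals the length of one $\gamma_i$ is handled by the symmetric argument started from $q$). I would split according to whether $r\in\Delta$.

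If $r\notin\Delta$, hypothesis~\eqref{hyp-quantified-local-CATx} gives a closed \CATx\ ball $B$ around $r$ of radius $\lambda\,d(r,\Delta)>0$. For $\epsilon>0$ small enough, both $\gamma_i|_{[t_0-\epsilon,t_0+\epsilon]}$ are genuine geodesics inside $B$, starting at the common point $\gamma_i(t_0-\epsilon)$ with a common initial germ (since they already agree on $[t_0-\epsilon,t_0]$). In a \CATx\ space with $\x\leq0$, local geodesics are uniquely determined by their starting point and initial germ; this is a standard consequence of the convexity of the distance between nearby local geodesics furnished by Lemma~\ref{lem-deformation-of-local-geodesics}. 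Hence $\gamma_0$ and $\gamma_1$ coincide on $[t_0-\epsilon,t_0+\epsilon]$, the desired contradiction.

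If $r\in\Delta$, hypothesis~\eqref{hyp-quantified-local-CATx} is silent, and one must instead bring \eqref{hyp-radial-geodesic-triangles-CATx} and \eqref{hyp-tangent-spaces-unique-local-geodesics} to bear. Let $\beta\in T_rX$ denote the common germ at $r$ of the backward pieces of $\gamma_0$ and $\gamma_1$, and $\alpha_0,\alpha_1\in T_rX$ the forward germs; it suffices to prove $\alpha_0=\alpha_1$. The natural plan is to show that, for each $i$, the path in $\overline{T_rX}$ obtained by concatenating the rays through $\beta$ and $\alpha_i$ at the origin is itself a local geodesic in $\overline{T_rX}$. Granted this, hypothesis~\eqref{hyp-tangent-spaces-unique-local-geodesics} applied to the two such local geodesics (for $i=0,1$), which share their $\beta$-halves, forces them to coincide, and so $\alpha_0=\alpha_1$. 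Establishing the local-geodesic property at the origin of $\overline{T_rX}$ would proceed by rescaling the metric of $X$ by $1/\epsilon$ near $r$ and applying hypothesis~\eqref{hyp-radial-geodesic-triangles-CATx} to the triangles with vertices $\gamma_i(t_0-\epsilon)$, $r$, $\gamma_i(t_0+\epsilon)$---each such triangle has $r\in\Delta$ as a vertex and so satisfies \CATx. The main obstacle is this blow-up argument in the limit $\epsilon\to0$: one must transport the CAT inequalities carefully enough to the tangent cone that the limiting path in $\overline{T_rX}$ really is a local geodesic, notwithstanding the potentially delicate behavior of $T_rX$ at the origin.
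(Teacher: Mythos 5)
Your Case 1 ($r\notin\D$) relies on the claim that ``in a \CATx\ space with $\x\leq0$, local geodesics are uniquely determined by their starting point and initial germ.'' This is false: geodesics in a CAT(0) space are unique between \emph{endpoints}, but geodesic \emph{extension} need not be unique. The standard counterexample is a metric tripod $T$ with legs ending at $b,c,d$ meeting at a center $a$; the geodesics $b\to a\to c$ and $b\to a\to d$ share the starting point $b$ and agree (hence have the same germ) on the whole segment $[b,a]$, then diverge at $a$. The function $t\mapsto d(\gamma_0(t),\gamma_1(t))$ here equals $\max(0,2(t-1))$, which is perfectly convex, so Lemma~\ref{lem-deformation-of-local-geodesics}\eqref{item-uniformly-close-local-geodesics-are-convex-to-each-other} does not rescue the claim. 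Since such branching can occur inside a complete \CATx\ ball, nothing in hypothesis~\eqref{hyp-quantified-local-CATx} rules out $r\notin\D$ being a branch point, and the maximality of $t_0$ cannot be contradicted this way.

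A further gap is that your argument never covers $t_0=0$: if the two local geodesics diverge immediately at $p$, there is no earlier $\gamma_i(t_0-\epsilon)$ to furnish a common germ. This is precisely the hard case. Note that once $x,y\in X-\D$ are ``not far apart'' (in the paper's sense, the local geodesic is shorter than $B(x,y)=\inf_{c\in\D}(d(x,c)+d(c,y))$), the competing local geodesics are trapped entirely in $X-\D$, so your $r\in\D$ case cannot actually occur for them, while the first-point-of-divergence scheme collapses exactly because there need not be any initial agreement. The paper's strategy is essentially orthogonal to yours: rather than propagate agreement from the point of divergence, it shrinks both local geodesics toward the \emph{center} $c\in\D$ realizing $B(x,y)$ (Lemma~\ref{lem-no-obstructions-to-deformation}), rescales the shrunken curves into $T_cX$ and shows they converge to a common local geodesic there (Lemmas~\ref{lem-not-much-longer-implies-uniformly-close}--\ref{lem-gamma-0-is-local-geodesic}), invokes hypothesis~\eqref{hyp-tangent-spaces-unique-local-geodesics} to force the limits to agree, and then transfers that agreement back up (Lemma~\ref{lem-beta-and-beta-prime-become-closer-faster-than-linearly-as-they-shrink} together with hypothesis~\eqref{hyp-quantified-local-CATx}). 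Your Case 2 blow-up idea is in the right spirit---the tangent cone and hypothesis~\eqref{hyp-tangent-spaces-unique-local-geodesics} are indeed the engine---but it is applied at the wrong point ($r$ rather than $c$) and the quantitative control you flag as the ``main obstacle'' is in fact the bulk of the proof.
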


\begin{proof}[Proof of theorem~\ref{thm-CATx-extension}, given lemma~\ref{lem-unique-local-geodesics}:]
We must show that every geodesic triangle $T$ satisfies the
\CATx\ inequality.  If $T$ has a vertex in $\D$ then this is hypothesis
(\ref{hyp-radial-geodesic-triangles-CATx}).  If an edge of $T$ meets
$\D$ then we use Alexandrov subdivision and reduce to the previous case.  If
$\overline{xy}$ meets $\D$ for some vertex $x$ of $T$ and point $y$ of
the opposite edge $E$, then we subdivide along $\overline{xy}$ and
reduce to the previous case.  In the remaining case we consider the
set of geodesics $\overline{xy}$ from $x$ to the points $y$ of $E$.
None of them meets $\D$, and therefore (lemma~\ref{lem-deformation-of-local-geodesics}) each may be varied through a
continuous family of local geodesics from $x$ to nearby points of $E$.
By lemma~\ref{lem-unique-local-geodesics}, these local geodesics are
the same as the geodesics we started with.  That is, the geodesics
$\overline{xy}$ vary continuously with $y$.  Now the Alexandrov
patchwork argument \cite[fig.~4.2]{bridson-haefliger} proves that $T$
satisfies \CATx.
\end{proof}

The rest of the section is devoted to proving lemma~\ref{lem-unique-local-geodesics}.  Our first
step is to improve hypothesis
\eqref{hyp-radial-geodesic-triangles-CATx} to apply to all triangles
with a vertex in $\D$, not just geodesic ones.  This also proves a
special case of lemma~\ref{lem-unique-local-geodesics}.

\begin{lemma}
\label{lem-radial-triangles-are-CATx}
Every triangle with a vertex in $\D$ admits a comparison triangle and
satisfies \CATx.  Also, there is a unique local geodesic from any
given point of $\D$ to any given point of $X$.
\end{lemma}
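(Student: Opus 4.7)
My plan is to first establish uniqueness of local geodesics emanating from $c\in\D$, then use that uniqueness together with Alexandrov subdivision to deduce the general triangle statement. Uniqueness splits into two parts: (i) every local geodesic from $c$ is actually a geodesic, and (ii) two geodesics from $c$ to the same point coincide. I expect (i) to be the main obstacle, since it is where I must ``straighten'' a local geodesic into a global one using only hypothesis~(\ref{hyp-radial-geodesic-triangles-CATx}).

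For (i), parametrize a local geodesic $\gamma:[0,L]\to X$ from $c$ by arclength and set $t^*=\sup\{t:\gamma|_{[0,t]}\text{ is a geodesic}\}$. Local geodesicity gives $t^*>0$ and continuity of $d(c,\cdot)$ gives that $\gamma|_{[0,t^*]}$ is a geodesic. Suppose for contradiction $t^*<L$; pick $\delta>0$ so small that $\gamma|_{[t^*-\delta,t^*+\delta]}$ is a geodesic, and (using that $X$ is a geodesic space) pick a geodesic $\beta$ from $c$ to $\gamma(t^*+\delta)$ of length $L'\leq t^*+\delta$. The geodesic triangle $T$ with vertices $c,\gamma(t^*),\gamma(t^*+\delta)$ and edges $\gamma|_{[0,t^*]},\gamma|_{[t^*,t^*+\delta]},\beta$ has vertex $c\in\D$, so by~(\ref{hyp-radial-geodesic-triangles-CATx}) it satisfies \CATx. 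For small $\epsilon$, the points $\gamma(t^*-\epsilon)$ and $\gamma(t^*+\epsilon)$ lie on two adjacent edges of $T$ at distance $\epsilon$ from $\gamma(t^*)$, and by local geodesicity $d(\gamma(t^*-\epsilon),\gamma(t^*+\epsilon))=2\epsilon$. Two points at distance $\epsilon$ from a common vertex in the comparison triangle $T'$ (with sides $t^*,\delta,L'$) are at distance $\leq 2\epsilon$ in $X_\x$, with equality only if the comparison angle between those edges is $\pi$. The CAT inequality therefore forces the comparison angle at $\gamma(t^*)'$ to equal $\pi$; by the law of cosines in $X_\x$ this gives $L'=t^*+\delta$. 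Then $\gamma|_{[0,t^*+\delta]}$ is a geodesic, contradicting the definition of $t^*$, so $t^*=L$.

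For (ii), I use a degenerate-triangle argument. Given arclength-parametrized geodesics $\gamma,\gamma':[0,L]\to X$ from $c$ to $x$, form the triangle $T$ with vertices $c,x,x$ and edges $\gamma$, the constant path at $x$, and $\gamma'$ reversed. This is a geodesic triangle with vertex $c\in\D$, so by~(\ref{hyp-radial-geodesic-triangles-CATx}) it satisfies \CATx. Its comparison triangle in $X_\x$ has sides $L,0,L$, which is degenerate: the two ``$L$-edges'' in $X_\x$ collapse to the unique $X_\x$-geodesic from $c'$ to $x'$. For each $s\in[0,L]$, the points $\gamma(s)$ and $\gamma'(s)$ in $X$ correspond to the same point of $X_\x$ (at distance $s$ from $c'$ on this geodesic), so the CAT inequality yields $d(\gamma(s),\gamma'(s))\leq 0$ and hence $\gamma=\gamma'$. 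Together with (i) this establishes uniqueness of the local geodesic from $c\in\D$ to any $x\in X$.

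Finally, for the general triangle statement, let $T$ be any triangle with vertex $c\in\D$ and other vertices $y,z$, with local-geodesic edges $\overline{cy},\overline{yz},\overline{cz}$. By the uniqueness just proved, $\overline{cy}$ and $\overline{cz}$ coincide with the unique geodesics from $c$ to $y$ and to $z$. Choose a partition $0=t_0<\cdots<t_n=1$ of the parameter interval of $\overline{yz}$ so that each restriction $\overline{yz}|_{[t_{i-1},t_i]}$ is a geodesic. For each $i$, the triangle with vertex $c$, the unique geodesic edges from $c$ to $\overline{yz}(t_{i-1})$ and $\overline{yz}(t_i)$, and the edge $\overline{yz}|_{[t_{i-1},t_i]}$, is a geodesic triangle with vertex in $\D$, hence by~(\ref{hyp-radial-geodesic-triangles-CATx}) it satisfies \CATx. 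Iterating the Alexandrov subdivision lemma (Lemma~\ref{lem-Alexandrov-subdivision}) combines these subtriangles into the required comparison triangle and \CATx\ inequality for $T$.
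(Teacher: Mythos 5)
Your proof is correct, but it organizes the argument differently from the paper, and the difference is worth noting. The paper proves first what you might call a \emph{special case of the first claim}: any triangle $cxy$ whose two edges $\overline{cx}$, $\overline{cy}$ are geodesics (and whose third edge is merely a local geodesic) admits a comparison triangle and satisfies \CATx, by subdividing $\overline{xy}$, coning to $c$, and iterating Alexandrov. Uniqueness is then derived \emph{from} this special case by setting $y=c$: the resulting degenerate comparison triangle forces $\ell(\gamma)=d(c,x)$ (so every local geodesic from $c$ is automatically a geodesic) and the \CATx\ inequality pinches $\gamma=\overline{cx}$. You instead prove ``local geodesic from $c$ implies geodesic'' directly, via a comparison-angle argument at the first parameter $t^*$ past which $\gamma$ fails to be minimizing, and prove uniqueness among geodesics with a degenerate triangle $cxx$; only then do you run the subdivision-plus-Alexandrov argument for the general triangle. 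Both routes are valid. The paper's is more economical, since the same subdivision lemma does double duty (it yields both the comparison triangle for the general case and the geodesicity of $\gamma$ as a byproduct of the comparison triangle's existence), whereas your approach separates these into independent steps with an explicit angle computation. Your angle argument is sound: the \CATx\ inequality forces $d(v',w')\geq 2\epsilon$ for the comparison points, while the triangle inequality gives $d(v',w')\leq 2\epsilon$, so the comparison angle at $\gamma(t^*)'$ is $\pi$, the comparison triangle degenerates, and the two geodesic pieces concatenate to a geodesic on $[0,t^*+\delta]$, contradicting the choice of $t^*$.
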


\begin{proof}
Suppose $c\in \D$ and $x,y\in X$.  First we prove a special case of the
first claim.  If $cxy$ is a triangle such that $\overline{cx}$ and
$\overline{cy}$ are geodesics, then one can subdivide $\overline{xy}$
into geodesic segments and join the subdivision points to $c$ by geodesics.  Each triangle
obtained by subdivision satisfies \CATx\ by hypothesis~\eqref{hyp-radial-geodesic-triangles-CATx}, and
repeated use of Alexandrov's lemma shows that $cxy$ has a comparison
triangle and satisfies \CATx.  

Now we prove the second claim.  We must
show that if $\gamma$ is a local geodesic from $c$ to $x$ then it equals
$\overline{cx}$.  We use the case just proven with $y=c$ and
$\overline{cy}$ being the constant path at $y$.  The comparison
triangle is then a segment, and the \CATx\ inequality forces
$\gamma=\overline{cx}$.  Finally, if $cxy$ is any triangle with a vertex
at $c$, then we have just seen that $\overline{cx}$ and
$\overline{cy}$ are geodesics, and the lemma reduces to the special
case.
\end{proof}

We will use lemma~\ref{lem-radial-triangles-are-CATx} many times, without specific reference.  What
remains to prove lemma~\ref{lem-unique-local-geodesics} is the case
$x,y\in X-\D$.  Before the real work begins there is one more easy
case: when $x$ and $y$ are joined by a local geodesic long enough to
touch $\D$.  Define \notation{$B(x,y)$}${}=\inf_{c\in \D}\bigl(d(x,c)+d(c,y)\bigr)$,
and call any $c$ realizing this infimum a \defn{center} for $x$ and
$y$.  Standard arguments \cite[prop.~II.2.7]{bridson-haefliger} show
that $c$ exists.  (It is also unique, but we won't need this.)

\begin{lemma}
\label{lem-far-apart-case-new-version}
If $x$ and $y$ are \defn{far apart}, meaning that they are joined by a
local geodesic $\beta$ of length${}\geq B(x,y)$, then that is the unique
local geodesic joining them.
\end{lemma}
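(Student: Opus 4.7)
The plan is to fix a center $c\in\D$ for $x,y$ and force the comparison triangle of any radial triangle $cxy$ (whose third edge is a local geodesic from $x$ to $y$) to be degenerate; uniqueness then falls out. Choose $c$ with $d(x,c)+d(c,y)=B(x,y)$, and write $\ell_1:=d(c,x)$, $\ell_2:=d(c,y)$. The triangle with edges $\overline{cx},\overline{cy},\beta$ has its vertex $c$ in $\D$, so lemma~\ref{lem-radial-triangles-are-CATx} provides a comparison triangle in $X_\x$. Existence of this comparison triangle is equivalent to the edge-length triangle inequality $\ell(\beta)\leq\ell_1+\ell_2=B(x,y)$, so together with the hypothesis $\ell(\beta)\geq B(x,y)$ we obtain $\ell(\beta)=B(x,y)$ and the comparison triangle is degenerate, with $c'$ on $\overline{x'y'}$ at distance $\ell_1$ from $x'$. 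Parametrize $\overline{cx},\overline{cy}$ by arclength from $c$; for each $s\in[0,\ell_1]$ the points $\overline{cx}(s)$ and $\beta(\ell_1-s)$ have the same comparison point on $\overline{x'y'}$, so \CATx\ forces $\overline{cx}(s)=\beta(\ell_1-s)$, and symmetrically $\overline{cy}(s)=\beta(\ell_1+s)$. Thus $\beta$ equals the concatenation $\overline{xc}*\overline{cy}$ passing through $c$.

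In particular $\beta$ is a local geodesic through $c$, so its restriction to a neighborhood of $c$ is an honest geodesic and $d(\overline{cx}(\epsilon),\overline{cy}(\epsilon))=2\epsilon$ for all sufficiently small $\epsilon>0$. Now let $\beta'$ be any local geodesic from $x$ to $y$, and apply lemma~\ref{lem-radial-triangles-are-CATx} to the triangle with edges $\overline{cx},\overline{cy},\beta'$; this again produces a comparison triangle $c'x'y'$ in $X_\x$, and in particular $\ell(\beta')\leq B(x,y)$. The \CATx\ inequality applied to the pair $\overline{cx}(\epsilon),\overline{cy}(\epsilon)$ yields $2\epsilon\leq d_{X_\x}(\overline{c'x'}(\epsilon),\overline{c'y'}(\epsilon))$. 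For fixed $\epsilon>0$ this comparison distance depends continuously and strictly monotonically on the angle at $c'$ and attains $2\epsilon$ only when that angle is $\pi$; and by the law of cosines the angle at $c'$ is strictly increasing in $\ell(\beta')$ on $[|\ell_1-\ell_2|,\ell_1+\ell_2]$ and equals $\pi$ only at $\ell(\beta')=\ell_1+\ell_2$. Hence $\ell(\beta')=B(x,y)$, and running the degenerate-comparison argument of the first paragraph with $\beta'$ in place of $\beta$ gives $\beta'=\overline{xc}*\overline{cy}=\beta$.

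The one computational point that deserves checking is the strict monotonicity just invoked. In the Euclidean plane ($\x=0$) the comparison distance is $2\epsilon\sin(\theta/2)$, which is strictly less than $2\epsilon$ for $\theta<\pi$; the hyperbolic case is analogous from the hyperbolic law of cosines. Everything else is a direct application of lemma~\ref{lem-radial-triangles-are-CATx} and the strong form of the \CATx\ inequality.
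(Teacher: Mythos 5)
Your proof is correct and follows essentially the same strategy as the paper: first force $\ell(\beta)=B(x,y)$ via the degenerate comparison triangle for $\overline{cx},\overline{cy},\beta$ and conclude $\beta=\overline{xc}\cup\overline{cy}$, then rule out any shorter local geodesic $\beta'$ by observing that the \CATx\ inequality applied near $c$ would contradict the local geodesic property of $\overline{xc}\cup\overline{cy}$ at $c$. You spell out the monotonicity of the comparison chord in the angle at $c'$ more explicitly than the paper does, but the underlying argument is the same.
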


\begin{proof}
Applying the \CATx\ inequality to the triangle $\overline{xc},
\overline{cy},\beta$ shows that $\beta$ equals $\overline{xc}\cup\overline{cy}$.  This
argument applies to any local geodesic of length${}\geq B(x,y)$, so
$\beta$ is the unique local geodesic of length${}\geq B(x,y)$.  The
\CATx\ inequality also implies that there cannot be a local geodesic
$\beta'$ from $x$ to $y$ of length${}<B(x,y)$; otherwise
$\overline{xc}\cup\overline{cy}$ would fail to be a local geodesic at
$c$
\end{proof}

For the rest of the section we suppose $x,y$ are not far apart and
that \notation{$c$}${}\in \D$ is their center.  The proof of
lemma~\ref{lem-unique-local-geodesics} in this case relies on several
delicate rescaling arguments.

For $z\in X$
and $t\in[0,1]$ we define $t.z$ to be the unique point of
$\overline{cz}$ at distance $t\cdot d(c,z)$ from $c$.  For $w,z\in X$
and $t\in(0,1]$ we define the possibly-degenerate metric
$d_t(w,z)=\frac{1}{t}d(t.w,t.z)$.  
The \CATx\ inequality for triangles with a vertex at $c$ implies
that for fixed $w$ and $z$, $d_t(w,z)$ is nonincreasing as $t\to0$, so
it has a limit $d_0(w,z)$.  In fact this limit is
$D(\overline{cw},\overline{cz})$, the distance in $T_cX$ between
$\overline{cw}$ and $\overline{cz}$, where both geodesics are
parameterized by $[0,1]$.  So $d_0$ describes the geometry of
$T_c X$.

Now suppose \notation{$\beta$} is a local geodesic from $x$ to $y$; ultimately we
will show it is the only one.  Since $x$ and $y$ are not far apart,
$\beta$ is too short to meet $\D$, so it lies in $X-\D$, which is locally
complete and locally \CATx.  Therefore deforming $\beta$'s endpoints
along $\overline{cx}$ and $\overline{cy}$ yields a deformation of $\beta$
through local geodesics in $X-\D$.  Our first result is that this
deformation meets no obstructions until the endpoints reach $c$.

\begin{lemma}
\label{lem-no-obstructions-to-deformation}
There is a unique continuous map $(0,1]\times[0,1]\to X-\D$, which we
  write $(t,s)\mapsto{}$\notation{$\beta_t(s)$}, with $\beta_1=\beta$ and $\beta_t$ a local
  geodesic from $t.x$ to $t.y$.  Furthermore, 
\begin{enumerate}
\item
\label{item-beta-length-shrinks-at-least-linearly}
$\frac{1}{t}\ell(\beta_t)$
  is nonincreasing as $t$ decreases; in particular $\ell(\beta_t)\leq
  t\ell(\beta)$.
\item
\label{item-beta-distance-bound-from-Delta}
Every point of $\beta_t$ lies at
distance${}\geq\frac{t}{2}\bigl(B(x,y)-\ell(\beta)\bigr)>0$ from $\D$.
\end{enumerate}
\end{lemma}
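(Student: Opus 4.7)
The plan is a continuation argument in $t$. Let $S\subseteq(0,1]$ be the set of parameters $t_0$ for which there exists a continuous map $(t,s)\mapsto\beta_t(s)$ on $[t_0,1]\times[0,1]$ with $\beta_1=\beta$ and each $\beta_t$ a local geodesic in $X-\D$ from $t.x$ to $t.y$, uniqueness enforced by requiring $\beta_t$ to stay uniformly close to $\beta_{t'}$ for $t$ near $t'\in[t_0,1]$. I will show $S=(0,1]$, establishing \eqref{item-beta-length-shrinks-at-least-linearly} and \eqref{item-beta-distance-bound-from-Delta} along the way; these bounds are not decorative but are precisely what prevents the deformation from pinching into $\D$ at a positive infimum of $t$.

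\medskip

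For local extension, fix $t_0\in S$. Since $\beta_{t_0}$ is compact and disjoint from $\D$, hypothesis~\eqref{hyp-quantified-local-CATx} provides a uniform $R>0$ such that the closed $R$-ball about each point of $\beta_{t_0}$ is complete and \CATx. For $t$ sufficiently near $t_0$, $t.x\in B_{R/2}(t_0.x)$ and $t.y\in B_{R/2}(t_0.y)$, so lemma~\ref{lem-deformation-of-local-geodesics}\eqref{item-unique-local-geodesic-uniformly-close} yields a unique local geodesic $\beta_t$ uniformly within $R/2$ of $\beta_{t_0}$, while part~\eqref{item-uniformly-close-local-geodesics-are-convex-to-each-other} gives joint continuity in $(t,s)$. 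Thus $S$ is downward open, and $\beta$ is uniquely defined on $(\tau,1]\times[0,1]$, where $\tau:=\inf S$.

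\medskip

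To prove \eqref{item-beta-length-shrinks-at-least-linearly} I argue locally in $t$. Given $s<t$ in $S$ with $s$ close to $t$, define $\tilde\beta_t(\sigma):=(s/t).\beta_t(\sigma)$, a path from $s.x$ to $s.y$. The monotonicity of $d_t(w,z)$ recorded just before the lemma implies $d(\alpha.p,\alpha.q)\le\alpha\,d(p,q)$ for all $\alpha\in(0,1]$ and $p,q\in X$; summing over any subdivision of $[0,1]$ and taking the supremum gives $\ell(\tilde\beta_t)\le(s/t)\ell(\beta_t)$. Since $d(\tilde\beta_t(\sigma),\beta_t(\sigma))=(1-s/t)\,d(c,\beta_t(\sigma))$ is uniformly small for $s$ near $t$, $\tilde\beta_t$ lies uniformly within $R/2$ of $\beta_t$, and lemma~\ref{lem-deformation-of-local-geodesics}\eqref{item-uniformly-close-local-geodesic-minimizes-length} then gives $\ell(\beta_s)\le\ell(\tilde\beta_t)\le(s/t)\ell(\beta_t)$. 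Concatenating such local comparisons along the interval $S$ yields the monotonicity of $\ell(\beta_t)/t$; specializing to $t=1$ gives $\ell(\beta_s)\le s\,\ell(\beta)$.

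\medskip

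For \eqref{item-beta-distance-bound-from-Delta}, first note that $c$ remains a center for $t.x,t.y$ with $B(t.x,t.y)=tB(x,y)$: a competitor $c''\in\D$ with $d(t.x,c'')+d(c'',t.y)<tB(x,y)$ would make the concatenation $x\to t.x\to c''\to t.y\to y$ strictly shorter than $B(x,y)$, contradicting the definition of $B$. For any $p\in\beta_t$ and any $c'\in\D$, two triangle inequalities together with \eqref{item-beta-length-shrinks-at-least-linearly} yield
\[
2\,d(p,c')\ge d(t.x,c')+d(c',t.y)-\bigl(d(t.x,p)+d(p,t.y)\bigr)\ge tB(x,y)-\ell(\beta_t)\ge t\bigl(B(x,y)-\ell(\beta)\bigr),
\]
and taking the infimum over $c'$ produces \eqref{item-beta-distance-bound-from-Delta}. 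The main obstacle is ruling out $\tau>0$: if $\tau>0$ then \eqref{item-beta-distance-bound-from-Delta} forces $d(\beta_t,\D)\ge(\tau/2)(B(x,y)-\ell(\beta))>0$ for all $t$ near $\tau$, so \eqref{hyp-quantified-local-CATx} delivers a \CATx\ radius bounded below uniformly in such $t$, and the local-extension step pushes the family to parameters strictly below $\tau$ — contradicting the definition of $\tau$. Hence $\tau=0$ and the family exists on all of $(0,1]$. The whole argument hinges on this feedback loop between \eqref{item-beta-distance-bound-from-Delta} and the uniform extension radius; everything else is a direct application of lemma~\ref{lem-deformation-of-local-geodesics} and the radial \CATx\ scaling at $c$.
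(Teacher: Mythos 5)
Your proof is correct, and it follows the paper's overall strategy: deform $\beta$ by moving its endpoints along $\overline{cx}$ and $\overline{cy}$, establish (a) and (b) for the parameters already reached, and then use (b)'s positive lower bound on $d(\beta_t,\D)$ to show the infimum of attainable parameters must be $0$. Where you genuinely differ from the paper is in the proof of part~(a). The paper assembles a quadrilateral of comparison triangles for $c$, $t.x$, $t'\!.x$, $t.y$, $t'\!.y$ and extracts the inequality $\ell(\beta_{t'})\le\frac{t'}{t}\ell(\beta_t)$ from the observation that the interior comparison angles at $t'\!.x$ and $t'\!.y$ must be $\ge\pi$ (else $\overline{cx}$, $\overline{cy}$ would not be geodesic there). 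You instead form the competitor path $\tilde\beta_t=(s/t).\beta_t$, observe $\ell(\tilde\beta_t)\le(s/t)\ell(\beta_t)$ from the monotonicity of $d_t$ at $c$, check it stays uniformly within $R/2$ of $\beta_t$, and invoke the length-minimizing property of lemma~\ref{lem-deformation-of-local-geodesics}\eqref{item-uniformly-close-local-geodesic-minimizes-length}. This is a cleaner route that avoids any angle bookkeeping. Your version of (b) is also phrased differently (a two-triangle-inequality calculation against $B(t.x,t.y)=tB(x,y)$ rather than the paper's concatenated-path estimate) but is essentially a rearrangement of the same computation. One small remark on the final step: the paper makes $\beta_\tau$ explicit as the uniform limit (via lemma~\ref{lem-deformation-of-local-geodesics}\eqref{item-uniformly-close-local-geodesics-are-convex-to-each-other}) and checks it is a local geodesic avoiding $\D$ before deforming it further, whereas you argue via a uniform lower bound on the extension radius; both are fine, but your version implicitly relies on the uniqueness clause of lemma~\ref{lem-deformation-of-local-geodesics}\eqref{item-unique-local-geodesic-uniformly-close} to see that the extension past $\tau$ agrees with the previously constructed family on the overlap, which is worth spelling out.
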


\begin{proof}
Certainly there is some $0\leq t_0<1$ for which the first assertion
holds with $(t_0,1]$ in place of $(0,1]$.  We will prove
    \eqref{item-beta-length-shrinks-at-least-linearly} and
    \eqref{item-beta-distance-bound-from-Delta} for $t\in(t_0,1]$,
      then use them to show that we may take $t_0=0$.
      
\eqref{item-beta-length-shrinks-at-least-linearly} is
essentially a standard fact, but since the triangle $cxy$ is not in a
(known) \CATx\ space and the $\beta_t$'s are not (known) geodesics, we
indicate a proof.  
Given $t\in(t_0,1]$, suppose $t'$ is slightly smaller than
$t$.  By moving one endpoint of $\beta_t$, we obtain a local geodesic from $t'\!.x$ to
$t.y$, and then by moving the other we obtain $\beta_{t'}$.  This uses
  the uniqueness in lemma~\ref{lem-deformation-of-local-geodesics}.  The triangle
with vertices $c$, $t'\!.x$ and $t'\!.y$ satisfies \CATx\ by lemma~\ref{lem-radial-triangles-are-CATx}.  The triangle with
vertices $t.x$, $t'\!.x$ and $t.y$ satisfies \CATx\ by
lemma~\ref{lem-deformation-of-local-geodesics}, and similarly
for the triangle with vertices $t'\!.x$, $t.y$ and $t'\!.y$.  Assemble
the comparison triangles in the obvious way.  Then the interior angle
at the point corresponding to $t'\!.x$ must be at least $\pi$, or else
$\overline{cx}$ would fail to be a geodesic at $t'\!.x$.  And similarly for
$t'\!.y$.  This implies $\ell(\beta_{t'})\leq\frac{t'}{t}\ell(\beta_t)$.

\eqref{item-beta-distance-bound-from-Delta} For $t\in(t_0,1]$,
  consider the path along $\overline{cx}$ from $x$ to $t.x$, then
  along $\beta_t$, and then along $\overline{cy}$ from $t.y$ to~$y$.
  Using \eqref{item-beta-length-shrinks-at-least-linearly} and
  $B(x,y)=d(x,c)+d(c,y)$, its length is bounded above by
\begin{equation}
\label{eq-beta-t-is-far-away-from-branch-locus}
(1-t)d(x,c)+t\ell(\beta)+(1-t)d(y,c)
=B(x,y)-t\bigl(B(x,y)-\ell(\beta)\bigr).
\end{equation}
Since every path from $x$ to $y$ that meets $\D$ has length${}\geq
B(x,y)$, every point of $\beta_t$ lies at distance at least
$\frac{t}{2}\bigl(B(x,y)-\ell(\beta)\bigr)$ from $\D$, as desired.
Also, $B(x,y)-\ell(\beta)>0$ since $x$ and $y$ are not far apart.

Now we show that we may take $t_0=0$; suppose $t_0>0$.  The $\beta_t$ are
uniformly Cauchy (lemma~\ref{lem-deformation-of-local-geodesics}), so
they have a limit $\beta_{t_0}$ in $X$.  Since
\eqref{eq-beta-t-is-far-away-from-branch-locus} holds for all
$\beta_{t>t_0}$, it also holds for $\beta_{t_0}$.  That is, $\beta_{t_0}$ lies
at distance at least $\frac{t_0}{2}\bigl(B(x,y)-\ell(\beta)\bigr)>0$ from
$\D$, so it is covered by \CATx\ neighborhoods.  In each of these
$\beta_{t_0}$ is a limit of geodesics, so $\beta_{t_0}$ is a local geodesic.
So we can continue the deformation by deforming $\beta_{t_0}$.  It
follows that we may take $t_0=0$.
\end{proof}

Now we define scaled-up versions of the $\beta_t$.  Because we haven't
assumed any sort of extendibility of geodesics, we must do the scaling
in $T_cX$.  There is a natural projection \defn{$\pi$}${}:X\to T_c X$,
defined by assigning each $z\in X$ to the germ of the geodesic
$[0,1]\to X$ from $c$ to $z$.  (Caution: $\pi$ may be
discontinuous, such as in
example~\ref{eg-half-plane-with-crushed-boundary}.)  For $t\in(0,1]$ we
  define \notation{$\gamma_t$}${}:[0,1]\to T_c X$ to be $\beta_t$, followed
  by $\pi$, followed by scaling up by $1/t$.  
It follows from \eqref{hyp-radial-geodesic-triangles-CATx} that the $\gamma_t$ are continuous, but we won't
actually use this.

Our next goal is lemma~\ref{lem-gamma-t-convereges-uniformly}, which
yields a limit $\gamma_0$ of the $\gamma_t$'s.  We will need the following
tool for showing that a family of curves are uniformly close when one
of them is a local geodesic and the others are not much faster than
it. This lemma is independent of the hypotheses of
theorem~\ref{thm-CATx-extension}.

\begin{lemma}
\label{lem-not-much-longer-implies-uniformly-close}
Let $L\geq0$, $R>0$ and $\epsilon>0$ be given.  Then there exists $\delta>0$
such that the following holds.  Suppose $X$ is a metric space and
$\alpha:[0,1]\to X$ is a local geodesic of length${}\leq L$, such that the
closed $R$-ball around every point of $\alpha$ is complete and \CAT0.  If
$\alpha_u$ is a continuous variation of $\alpha$ through paths
$[0,1]\to X$ of
speed${}\leq\ell(\alpha)+\delta$, with the same endpoints, then each $\alpha_u$ is
uniformly within $\epsilon$ of $\alpha$.

Furthermore, if one scales $L$, $R$ and $\epsilon$ by a positive number then
one may scale $\delta$ by the same factor.
\end{lemma}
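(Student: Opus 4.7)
The plan is a continuity bootstrap anchored on a CAT(0) length-excess estimate. Without loss of generality $\epsilon < R/2$. Define $F(u) := \sup_{s\in[0,1]} d(\alpha_u(s), \alpha(s))$; interpreting ``continuous variation'' as continuity in the sup metric, $F$ is continuous with $F(0)=0$. Suppose for contradiction that some $\alpha_u$ fails to be uniformly within $\epsilon$ of $\alpha$, and choose $u^*$ minimal with $F(u^*) = \epsilon$. Since $\epsilon < R/2$, $\alpha_{u^*}$ lies in the $R/2$-tube around $\alpha$; the length-minimizing clause of lemma~\ref{lem-deformation-of-local-geodesics} then yields $\ell(\alpha) \leq \ell(\alpha_{u^*})$, while the speed hypothesis gives $\ell(\alpha_{u^*}) \leq \ell(\alpha) + \delta$.

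The heart of the argument is the following key estimate: there is a constant $K = K(L, R)$ such that any path $\beta$ from $\alpha(0)$ to $\alpha(1)$ lying uniformly within $R/2$ of $\alpha$ with $\ell(\beta) \leq \ell(\alpha) + \delta$ satisfies $\sup_s d(\alpha(s), \beta(s)) \leq K\sqrt{\delta}$. Granted this, choosing $\delta$ with $K\sqrt{\delta} < \epsilon$ contradicts $F(u^*) = \epsilon$ and closes the bootstrap. To prove the estimate, subdivide $[0,1]$ into sub-intervals $[t_i, t_{i+1}]$ so short that $\alpha|_{[t_i, t_{i+1}]}$ is a true geodesic and both it and $\beta|_{[t_i, t_{i+1}]}$ lie in the complete CAT(0) ball $B_R(\alpha(t_i))$. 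Inside each ball, Euclidean comparison on the triangle with vertices $\alpha(t_i), \alpha(t_{i+1}), \beta(s)$ controls both the geometric distance from $\beta(s)$ to the image of $\alpha|_{[t_i, t_{i+1}]}$ and the discrepancy between the $\alpha$-parameter of the nearest point and $s$ itself, each by an expression in the corner deviations $h_i := d(\alpha(t_i), \beta(t_i))$ and the piecewise length excess $M_i - L_i$, where $M_i, L_i$ denote the lengths of the pieces of $\beta, \alpha$ on $[t_i, t_{i+1}]$. The triangle inequality gives $L_i - h_i - h_{i+1} \leq d(\beta(t_i), \beta(t_{i+1})) \leq M_i$, hence $\sum_i (M_i - L_i)^+ \leq \delta + 2\sum_i h_i$. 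Assembling the piecewise bounds yields an implicit inequality of the form $H^2 \leq C(L,R)(H + \delta)$ for $H := \max_i h_i$, which resolves to $H \leq K\sqrt{\delta}$ for small $\delta$. The scaling claim is immediate from the scale-invariance of the CAT(0) condition: rescaling the metric on $X$ by $c > 0$ rescales $L, R, \epsilon$, and $\delta$ all by $c$.

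The main obstacle is the key estimate. Two subtleties complicate its proof: the corner deviations $h_i$ appear on both sides of the piecewise comparison, forcing the implicit-inequality step just described; and one must track the parameter discrepancy rather than just the geometric distance from $\beta(s)$ to the image of $\alpha$, so the full speed bound — not merely a length bound — is indispensable, since a path whose length only slightly exceeds $\ell(\alpha)$ could otherwise be a wild reparameterization of $\alpha$ and fail to be uniformly close.
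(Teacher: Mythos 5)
Your bootstrap framing (minimal $u^*$ with $F(u^*)=\epsilon$, using that $\alpha_{u^*}$ is still in the $R/2$-tube by continuity) is sound and essentially matches the paper's openness-plus-connectedness argument. The ``scaling'' claim at the end is also fine. The gap is in the key estimate, which is where all the content lives.

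The piecewise comparison does not produce the inequality you claim. Take a piece $[t_i,t_{i+1}]$, place the Euclidean comparison triangle with base $A=\alpha(t_i)$, $B=\alpha(t_{i+1})$, $|AB|=L_i$, and apex $P=\beta(s)$. Using the side bounds $|AP|\le h_i+(s-t_i)v$, $|BP|\le h_{i+1}+(t_{i+1}-s)v$ with $v=\ell(\alpha)+\delta$, the CAT(0) inequality gives (after the usual convex combination trick)
\[
d(\alpha(s),\beta(s))^2 \;\le\; \tfrac{q}{p+q}(h_i+pv)^2 + \tfrac{p}{p+q}(h_{i+1}+qv)^2 - pq\,\ell(\alpha)^2,
\]
with $p=s-t_i$, $q=t_{i+1}-s$. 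Expanding, the right-hand side is $H^2$ plus lower-order terms whenever $h_i,h_{i+1}$ are near $H:=\sup_s d(\alpha(s),\beta(s))$; that is, the dominant term on the right is the very quantity you are trying to bound, squared. Taking the supremum over $s$ therefore gives a tautology rather than an estimate. Moreover, even the inequality you wrote down, $H^2\le C(L,R)(H+\delta)$, does \emph{not} resolve to $H\le K\sqrt\delta$: it is satisfied by every $H$ up to roughly $C$, so it carries no smallness. The subdivision localizes you into CAT(0) balls but at the cost of reintroducing interior corner deviations that the comparison cannot control; there is no cheap implicit-inequality escape.

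The paper circumvents this entirely by avoiding subdivision of $\alpha$. It first proves the elementary Euclidean statement: for a segment $\alpha'$ of length $\le L$ in $\R^2$ and any path of speed $\le\ell(\alpha')+\delta$ with the same endpoints, the paths are uniformly within $\epsilon$ once $\delta$ is small (your two-ball intersection computation, which really does give a $\sqrt\delta$ bound \emph{there} because the endpoints have zero deviation). Then, for a given $s$, it sets $b=\alpha_u(s)$ and invokes lemma~\ref{lem-deformation-of-local-geodesics} to produce the unique geodesics $\overline{ab}$, $\overline{bc}$ within $R/2$ of $\alpha|_{[0,s]}$, $\alpha|_{[s,1]}$; that lemma also says the resulting \emph{single} long thin triangle with base $\alpha$ satisfies CAT(0), and that $\ell(\overline{ab})\le\ell(\alpha_u|_{[0,s]})$, $\ell(\overline{bc})\le\ell(\alpha_u|_{[s,1]})$. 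In the comparison triangle one then has a broken path through $b'$ at parameter $s$ of speed $\le\ell(\alpha')+\delta$, so the Euclidean statement bounds $d(b',\alpha'(s))<\epsilon$, and CAT(0) transports this back: $d(b,\alpha(s))\le d(b',\alpha'(s))<\epsilon$. The role of lemma~\ref{lem-deformation-of-local-geodesics} here is precisely to furnish the one big comparison triangle whose corners have \emph{zero} deviation, which is exactly what your piecewise corners lack. To repair your argument you would need to replace the subdivision by this single-triangle device, or find some other way to force the interior corner deviations out of the picture.
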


\begin{proof}
Suppose $L$, $R$ and $\epsilon$ are given and without loss of generality assume
$\epsilon<R/2$.  It is easy to prove an analogue of the lemma for $X=\R^2$:
there exists $\delta>0$ such that for any geodesic
$\alpha':[0,1]\to\R^2$ of length${}\leq L$ and any path $\alpha_u'$ of
speed${}\leq\ell(\alpha')+\delta$ having the same endpoints, $\alpha_u'$ and $\alpha'$
are uniformly within $\epsilon$.  The only care required is that one must
know $L$ before choosing $\delta$.  We will prove that this $\delta$ satisfies the
lemma.

So suppose $X$, $\alpha$, $\alpha_u$ are as stated, with $u$ varying over
$[0,1]$ and $\alpha_0=\alpha$.  Write $a$ and $c$ for the common endpoints of all
these paths.  Let $I$ be the set of $u$ for which $\alpha_u$ is uniformly
within $\epsilon$ of $\alpha$.  This is open, so by connectedness it will
suffice to show that $[0,u)\sset I$ implies $u\in I$.  So let
  $s\in[0,1]$ and write $b$ for $\alpha_u(s)$.  We must show
  $d(b,\alpha(s))<\epsilon$.  

Now, $\alpha_u$ is uniformly at most $\epsilon<R/2$ from $\alpha$, since it is a
limit of paths with this property.  Let $\overline{ab}$ (resp.\ $\overline{bc}$) be the unique geodesic from $a$ to $b$ (resp.\ $b$ to
$c$) that is uniformly within $R/2$ of $\alpha|_{[0,s]}$
(resp.\ $\alpha|_{[s,1]}$).  These exist by
lemma~\ref{lem-deformation-of-local-geodesics}, which also
tells us that $\overline{ab}$ and $\overline{bc}$ are no longer than
the corresponding parts of $\alpha_u$, and that the triangle
$\overline{ab}$, $\overline{bc}$, $\alpha$ satisfies \CAT0.  Let
$a',b',c'$ be the vertices of the comparison triangle.  Note that
$$
d(a',b')
=
\ell\bigl(\overline{ab}\bigr)
\leq
\ell(\alpha_u|_{[0,s]})
\leq 
s(\ell(\alpha)+\delta)
=
s(\ell(\alpha')+\delta)
$$
and similarly $d(b',c')\leq(1-s)(\ell(\alpha')+\delta)$.  Therefore there is a
path $\alpha_u':[0,1]\to\R^2$ from $a'$ to $c'$ of
speed${}\leq\ell(\alpha')+\delta$, with $\alpha_u'(s)=b'$.  Then we have
$$
d(b,\alpha(s))
\leq
d(b',\alpha'(s))
<
\epsilon,
$$
the first step by the \CAT0 inequality and the second by the choice of
$\delta$.  

We have proven that $\delta$ has the property in the statement of the
lemma.  The final claim of the lemma follows by scaling all distances.
\end{proof}

The conceptual content of the next lemma is its conclusion
\eqref{item-gammas-uniformly-Cauchy}, that the $\gamma_t$ have a limit
$\gamma_0$.  But the technical content is its conclusion
\eqref{item-betas-uniformly-Cauchy-after-rescaling}, that the $\beta_t$
are ``uniformly Cauchy after rescaling''.  Several constants appear in
the proof that we will need again later, so we define them here.  We
set \notation{$L$}${}=\ell(\beta)$ and
\notation{$L_0$}${}=\lim_{t\to0}\frac{1}{t}\ell(\beta_t)$, which exists
by
lemma~\ref{lem-no-obstructions-to-deformation}\eqref{item-beta-length-shrinks-at-least-linearly}.
Also, we define \notation{$R$}${}=\frac{\lambda}{2}\bigl(B(x,y)-\ell(\beta)\bigr)$,
where $\lambda$ is from hypothesis \eqref{hyp-quantified-local-CATx}.
The point of this definition is that any point of $\beta_t$ lies at
distance${}\geq\frac{t}{2}\bigl(B(x,y)-\ell(\beta)\bigr)$ from $\D$
(lemma~\ref{lem-no-obstructions-to-deformation}\eqref{item-beta-distance-bound-from-Delta}),
so the closed $tR$-ball centered there is complete and \CATx\ by hypothesis
\eqref{hyp-quantified-local-CATx}.

\begin{lemma}
\label{lem-gamma-t-convereges-uniformly}
\leavevmode
\begin{enumerate}
\item
\label{item-betas-uniformly-Cauchy-after-rescaling}
If $\epsilon>0$ is given, then for all small enough
$t>0$ and all $\mu\in(0,1]$, $\mu.\beta_t$ and $\beta_{\mu t}$ are uniformly
within $\mu\epsilon t$.  
\item
\label{item-gammas-uniformly-Cauchy}
The functions $\gamma_t$ converge uniformly to a function
\notation{$\gamma_0$}${}:[0,1]\to\overline{T_cX}$.
\end{enumerate}
\end{lemma}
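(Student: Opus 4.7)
The plan is to prove~\eqref{item-betas-uniformly-Cauchy-after-rescaling} by constructing a continuous family of paths from $\beta_{\mu t}$ to $\mu.\beta_t$ with endpoints fixed at $(\mu t).x$ and $(\mu t).y$, and then applying the rescaled form of lemma~\ref{lem-not-much-longer-implies-uniformly-close}. Given~\eqref{item-betas-uniformly-Cauchy-after-rescaling}, part~\eqref{item-gammas-uniformly-Cauchy} will follow by comparing both $(\mu u).\beta_t$ and $u.\beta_{\mu t}$ to $\beta_{\mu u t}$ and passing to the limit inside the definition of $D$.

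For~\eqref{item-betas-uniformly-Cauchy-after-rescaling}, I would define $\alpha^{(\nu)}:=(\mu t/\nu).\beta_\nu$ for $\nu\in[\mu t,t]$. Each $\alpha^{(\nu)}$ is a path from $(\mu t).x$ to $(\mu t).y$ (since $(\mu t/\nu).(\nu.x)=(\mu t).x$), the family depends continuously on $\nu$, and it specializes to $\beta_{\mu t}$ at $\nu=\mu t$ and to $\mu.\beta_t$ at $\nu=t$. Hypothesis~\eqref{hyp-radial-geodesic-triangles-CATx}, applied to infinitesimal triangles with vertex $c$, yields $d((\mu t/\nu).p,(\mu t/\nu).q)\leq(\mu t/\nu)\,d(p,q)$ for $\mu t/\nu\leq 1$, and hence $\ell(\alpha^{(\nu)})\leq(\mu t/\nu)\ell(\beta_\nu)$. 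The monotonicity $\ell(\beta_\nu)/\nu\leq\ell(\beta_t)/t$ from lemma~\ref{lem-no-obstructions-to-deformation}\eqref{item-beta-length-shrinks-at-least-linearly} then gives $\ell(\alpha^{(\nu)})\leq\mu t\cdot\ell(\beta_t)/t$, while $\ell(\beta_{\mu t})\geq\mu t\cdot L_0$ by the definition of $L_0$. Therefore $\ell(\alpha^{(\nu)})-\ell(\beta_{\mu t})\leq\mu t\bigl(\ell(\beta_t)/t-L_0\bigr)$, uniformly in $\mu$ and $\nu$, and this bound tends to $0$ as $t\to 0$.

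Now rescale the metric on $X$ by $1/(\mu t)$. In the rescaled picture $\beta_{\mu t}$ is a local geodesic of length $\leq L$, and by lemma~\ref{lem-no-obstructions-to-deformation}\eqref{item-beta-distance-bound-from-Delta} together with hypothesis~\eqref{hyp-quantified-local-CATx}, the closed $R$-balls around its points are complete and \CAT0. Apply lemma~\ref{lem-not-much-longer-implies-uniformly-close} with parameters $L$, $R$, $\epsilon$ to obtain a $\delta_0>0$; for $t$ small enough that $\ell(\beta_t)/t-L_0<\delta_0$, the rescaled lengths of all $\alpha^{(\nu)}$ lie within $\delta_0$ of $\ell(\beta_{\mu t})/(\mu t)$, so the lemma yields $\alpha^{(t)}$ uniformly within $\epsilon$ of $\beta_{\mu t}$ in the rescaled metric. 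Undoing the rescaling gives~\eqref{item-betas-uniformly-Cauchy-after-rescaling}. For~\eqref{item-gammas-uniformly-Cauchy}, apply~\eqref{item-betas-uniformly-Cauchy-after-rescaling} once with scaling $\mu u$ on $\beta_t$ and once with scaling $u$ on $\beta_{\mu t}$ (both legal for $t$ small, since $\mu t\leq t$), giving bounds of size $\mu u\epsilon t$ for the distances to $\beta_{\mu u t}$. The triangle inequality, division by $\mu t u$, and $\limsup_{u\to 0}$ then yield $D(\gamma_t(s),\gamma_{\mu t}(s))\leq 2\epsilon$ uniformly in $s$, so $\{\gamma_t\}$ is uniformly Cauchy as $t\to 0$ and converges uniformly in $\overline{T_cX}$ to some $\gamma_0$.

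The main obstacle is arranging for the constants in lemma~\ref{lem-not-much-longer-implies-uniformly-close} to be genuinely uniform in $\mu$ and $t$. This is what dictates the choice of the interpolating family $\alpha^{(\nu)}$: its length bound is proportional to $\mu t$, so after dividing by $\mu t$ everything takes place with fixed parameters $L$, $R$, $\epsilon$ and a single $\delta_0$ that works for all small $t$ and all $\mu\in(0,1]$.
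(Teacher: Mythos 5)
Your proof is correct and follows essentially the same route as the paper's. In part (a) the paper uses the identical interpolating family (your $\alpha^{(\nu)}$ is the paper's $\nu.\beta_{\mu t/\nu}$ after a change of parameter) together with the identical rescaled application of lemma~\ref{lem-not-much-longer-implies-uniformly-close}, verified with the same monotonicity of $\ell(\beta_t)/t$ and near-saturation $\ell(\beta_t)/t<L_0+\delta$; in part (b) the paper reaches the bound with a single application of (a) via the identity $\tfrac{1}{t}\pi(z)=\tfrac{1}{\mu t}\pi(\mu.z)$ and $d_0\leq d$, whereas your double application of (a) followed by taking the $\limsup$ in the definition of $D$ produces the slightly weaker constant $2\epsilon$ in place of $\epsilon$, which is equally sufficient for uniform Cauchyness.
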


\begin{proof}
\eqref{item-betas-uniformly-Cauchy-after-rescaling}
Suppose $\epsilon>0$ is
given.  Take $\delta$ from lemma~\ref{lem-not-much-longer-implies-uniformly-close}, using the values $L$ and $R$
given above.  Now suppose $t$ is small enough that
\begin{equation}
\label{eq-lengths-close-to-limiting-length}
L_0\leq\frac{1}{t}\ell(\beta_t)<L_0+\delta.
\end{equation}
We will prove that for any fixed $\mu\in(0,1]$, $\mu.\beta_t$ is
  uniformly within $\epsilon\mu t$ of $\beta_{\mu t}$.  To do this we will
  apply lemma~\ref{lem-not-much-longer-implies-uniformly-close} to the family
  of paths $\nu.\beta_{\mu t/\nu}$, $\nu\in[\mu,1]$, which interpolate
  between $\mu.\beta_t$ ($\nu=\mu$) and $\beta_{\mu t}$ ($\nu=1$).  We
  regard this as a deformation of $\beta_{\mu t}$, and we will apply the
  ``rescaled'' version of
  lemma~\ref{lem-not-much-longer-implies-uniformly-close} (i.e., its last
  assertion).

To apply that lemma we  verify (i) $\ell(\beta_{\mu t})\leq\mu tL$,
(ii) the closed $\mu tR$-ball around every point of $\beta_{\mu t}$ is
complete and \CAT0, and (iii) every path $\nu.\beta_{\mu t/\nu}$ has
speed${}\leq\ell(\beta_{\mu t})+\delta\mu t$.  The first condition holds
because $\ell(\beta_{\mu t})\leq\mu t\ell(\beta)=\mu tL$ by
lemma~\ref{lem-no-obstructions-to-deformation}\eqref{item-beta-length-shrinks-at-least-linearly}.
The second condition holds by choice of $R$, as explained above.  To address the third
condition, recall that $\beta_{\mu t/\nu}$ is a local geodesic
parameterized by $[0,1]$.  Therefore its speed is
$\ell(\beta_{\mu t/\nu})$.  Then the \CATx\ inequality implies that
$\nu.\beta_{\mu t/\nu}$ has speed at most
$$
\nu\ell(\beta_{\mu t/\nu})
\leq
\nu\frac{\mu}{\nu}\ell(\beta_t)
=
\mu t\frac{\ell(\beta_t)}{t}
<
\mu t\Bigl(\frac{\ell(\beta_{\mu t})}{\mu t}+\delta\Bigr)
=
\ell(\beta_{\mu t})+\delta\mu t,
$$
as desired.  The first step uses lemma~\ref{lem-no-obstructions-to-deformation}\eqref{item-beta-length-shrinks-at-least-linearly} and the third
uses \eqref{eq-lengths-close-to-limiting-length}.   We have verified the 
hypotheses of lemma~\ref{lem-not-much-longer-implies-uniformly-close}, so we deduce that all members of the
deformation, in particularly $\mu.\beta_t$, are uniformly within $\mu
t\epsilon$ of $\beta_{\mu t}$.

\eqref{item-gammas-uniformly-Cauchy} We will prove the $\gamma_t$
uniformly Cauchy as $t\to0$.  Suppose $\epsilon>0$ is given, that $t$ is
small enough to satisfy \eqref{eq-lengths-close-to-limiting-length},
and $\mu\in(0,1]$ and $s\in[0,1]$.  Then
\begin{align*}
D\bigl(\gamma_t(s),\gamma_{\mu t}(s)\bigr)
&=
D\bigl({\textstyle\frac{1}{t}}\,\pi\circ\beta_t(s),
{\textstyle\frac{1}{\mu t}}\,\pi\circ\beta_{\mu t}(s)\bigr)
\displaybreak[0]
\\
&=
D\bigl({\textstyle\frac{1}{\mu t}}\,\pi(\mu.\beta_t(s)),
{\textstyle\frac{1}{\mu t}}\,\pi\circ\beta_{\mu t}(s)\bigr)
\displaybreak[0]
\\
&=
{\textstyle\frac{1}{\mu t}}D\bigl(\pi(\mu.\beta_t(s)),\pi\circ\beta_{\mu t}(s)\bigr)
\displaybreak[0]
\\
&=
{\textstyle\frac{1}{\mu t}}
d_0\bigl(\mu.\beta_t(s),\beta_{\mu t}(s)\bigr)
\displaybreak[0]
\\
&\leq
{\textstyle\frac{1}{\mu t}}
d\bigl(\mu.\beta_t(s),\beta_{\mu t}(s)\bigr)
<
{\textstyle\frac{1}{\mu t}}
\mu t\epsilon
=\epsilon.
\end{align*}
\end{proof}

Our next goal is that $\gamma_0$ is a local geodesic.  Note that the
$\gamma_{t>0}$ are usually not geodesics, as simple examples show.

\begin{lemma}
\label{lem-gamma-0-is-local-geodesic}
$\gamma_0$ is a local geodesic.
\end{lemma}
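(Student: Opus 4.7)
The goal is to show that for every $s_0\in[0,1]$ there is an $\epsilon>0$ such that $\gamma_0|_{[s_0-\epsilon,s_0+\epsilon]\cap[0,1]}$ is a geodesic in $\overline{T_cX}$, of speed that will turn out to be $L_0$.

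First I would pin down a uniform parameter interval on which $\beta_t$ is an honest (not merely local) geodesic.  Because $\beta_t$ is a local geodesic of speed $\sigma_t:=\ell(\beta_t)$, and the closed $tR$-ball around each point of $\beta_t$ is complete and \CATx\ (hypothesis~(C) together with lemma~\ref{lem-no-obstructions-to-deformation}\eqref{item-beta-distance-bound-from-Delta}), the restriction of $\beta_t$ to any subinterval whose image has diameter at most $tR$ is a distance-realizing geodesic, since for $\x\leq0$ local geodesics inside a \CATx\ ball are automatically geodesics.  Using $\sigma_t/t\to L_0$, this allows a uniform choice, say $\epsilon=R/(3L_0)$, so that for all sufficiently small $t$ and all $s_1,s_2\in[s_0-\epsilon,s_0+\epsilon]$ one has $d(\beta_t(s_1),\beta_t(s_2))=\sigma_t|s_1-s_2|$.

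Next I would establish the upper estimate.  Using the identity $D(\gamma_t(s_1),\gamma_t(s_2))=d_0(\beta_t(s_1),\beta_t(s_2))/t$ and the inequality $d_0\le d$,
\[
D(\gamma_t(s_1),\gamma_t(s_2))\leq\frac{\sigma_t}{t}|s_1-s_2|.
\]
The uniform convergence $\gamma_t\to\gamma_0$ from lemma~\ref{lem-gamma-t-convereges-uniformly}\eqref{item-gammas-uniformly-Cauchy} then passes to the limit, giving $D(\gamma_0(s_1),\gamma_0(s_2))\leq L_0|s_1-s_2|$.

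The hard part is the matching lower bound $D(\gamma_0(s_1),\gamma_0(s_2))\geq L_0|s_1-s_2|$, and this is where hypothesis~\eqref{hyp-tangent-spaces-unique-local-geodesics} must be used.  My plan is to consider the geodesic triangle $(c,\beta_t(s_1),\beta_t(s_2))$ in $X$, which is \CATx\ by lemma~\ref{lem-radial-triangles-are-CATx}, and rescale by $1/t$ about $c$.  Since $\x\leq 0$ and the triangles shrink as $t\to 0$, the rescaled comparison triangles in $X_\x$ converge to Euclidean triangles with side lengths $D(o,\gamma_0(s_1))$, $D(o,\gamma_0(s_2))$, $L_0|s_1-s_2|$.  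From this limiting picture one extracts a bona fide local geodesic in $\overline{T_cX}$ joining $\gamma_0(s_1)$ to $\gamma_0(s_2)$.  By uniqueness of such local geodesics (hypothesis~\eqref{hyp-tangent-spaces-unique-local-geodesics}) this extracted geodesic must coincide with the limit of the paths $\gamma_t|_{[s_1,s_2]}$, namely $\gamma_0|_{[s_1,s_2]}$, which then inherits the full length $L_0|s_1-s_2|$.

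The main obstacle is step three.  Since $\overline{T_cX}$ is only assumed to be a complete metric space with unique local geodesics, and not a geodesic space, extracting a local geodesic from the rescaled triangles and then identifying it with $\gamma_0|_{[s_1,s_2]}$ is the delicate part: the compactness/Arzel\`a--Ascoli style argument must deliver a genuine limit curve, and hypothesis~\eqref{hyp-tangent-spaces-unique-local-geodesics} must be invoked carefully to conclude that this curve equals $\gamma_0|_{[s_1,s_2]}$ rather than some competitor that merely shares its endpoints.
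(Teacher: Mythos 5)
The high-level strategy—show $D(\gamma_0(s_1),\gamma_0(s_2))=L_0|s_1-s_2|$ for nearby $s_1,s_2$ by proving matching upper and lower bounds—is viable, and your steps~1 and~2 are essentially correct.  But step~3, the lower bound, is where the real work lies, and the mechanism you propose does not deliver it.  The \CATx\ inequality for the triangle $(c,\beta_t(s_1),\beta_t(s_2))$ only produces \emph{upper} bounds on distances (the triangle is ``thinner'' than its comparison triangle); it says nothing that prevents the projection $\pi$ to $T_cX$ from collapsing distances.  The quantity $D(\gamma_0(s_1),\gamma_0(s_2))$ arises from the doubly rescaled metric $d_0$ applied to the $\beta_t(s_i)$, while $L_0|s_1-s_2|$ arises from the singly rescaled distance $\frac1t d(\beta_t(s_1),\beta_t(s_2))$; because $d_0\le d$ can be strict (this is exactly the pathology in example~\ref{eg-cusped-cone}), there is no reason a priori for the former to dominate the latter, and no convergence of comparison-triangle side lengths can supply the missing inequality.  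Your appeal to hypothesis~\eqref{hyp-tangent-spaces-unique-local-geodesics} here is also misplaced: uniqueness of local geodesics in $\overline{T_cX}$ is what the paper uses in lemma~\ref{lem-beta-and-beta-prime-become-closer-faster-than-linearly-as-they-shrink} to identify $\gamma_0$ and $\gamma_0'$, and it plays no role in showing $\gamma_0$ is a local geodesic.

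The missing ingredient is lemma~\ref{lem-gamma-t-convereges-uniformly}\eqref{item-betas-uniformly-Cauchy-after-rescaling}.  Fix a small $t$ satisfying its hypothesis.  Since $\mu.\beta_t$ and $\beta_{\mu t}$ are uniformly within $\mu\epsilon t$ and $\beta_{\mu t}$ is a geodesic on short segments, one gets
\[
\frac{1}{\mu t}\,d\bigl(\mu.\beta_t(s),\mu.\beta_t(s')\bigr)
\;\ge\;
\frac{\ell(\beta_{\mu t})}{\mu t}\,|s-s'|\;-\;2\epsilon,
\]
and letting $\mu\to 0$ on the left gives $\frac1t d_0(\beta_t(s),\beta_t(s'))=D(\gamma_t(s),\gamma_t(s'))\ge L_0|s-s'|-2\epsilon$; then $t\to0$ and $\epsilon\to0$ give the lower bound.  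This is close to (but not identical with) the paper's argument: the paper instead runs a single telescoping chain of inequalities, using lemma~\ref{lem-gamma-t-convereges-uniformly}\eqref{item-betas-uniformly-Cauchy-after-rescaling} at the crucial step, to prove $\ell_D(\gamma_0|_{[s,s']})\le D(\gamma_0(s),\gamma_0(s'))$ directly.  That phrasing avoids having to assert that the speed is exactly $L_0$ and is what makes the argument go through cleanly; you would do well to adopt it, or at least to recognize that lemma~\ref{lem-gamma-t-convereges-uniformly}\eqref{item-betas-uniformly-Cauchy-after-rescaling} (rather than comparison triangles or uniqueness) is the engine of the proof.
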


\begin{proof}
If $0\leq s<s'\leq1$ are within $R/L$ of each other then we call
$\beta_t|_{[s,s']}$ a short segment of $\beta_t$.  Here $R$ and $L$ are as
before lemma~\ref{lem-gamma-t-convereges-uniformly}, and we assume
$L\neq0$ because the $L=0$ case of the lemma is
trivial.  The importance of short segments is that the speed of $\beta_t$
is at most $tL$, so a short segment of $\beta_t$ lies in the $tR$-ball
around each of its points, which is \CATx\ by hypothesis.  Since
$\beta_t$ is a local geodesic, its short segments are geodesics.
We will show that each short segment of $\gamma_0$ is a geodesic, which
proves the lemma.

So suppose $s'-s<R/L$.  For $\epsilon>0$, choose $t>0$ small enough that
$\gamma_{t}$ is uniformly within $\epsilon$ of $\gamma_0$.  By shrinking $t$, we may
also suppose that the conclusion of
lemma~\ref{lem-gamma-t-convereges-uniformly}\eqref{item-betas-uniformly-Cauchy-after-rescaling}
holds.  In the following calculation we allow $\mu$ but not $t$ to
vary.  Given $s=s_0<\dots<s_m=s'$, we first use the convergence
$\gamma_{\mu t}\to\gamma_0$:
\begin{align*}
\sum_{i=1}^m D\bigl( \gamma_0(s_{i-1}), \gamma_0(s_i) \bigr)
&{}=
\lim_{\mu\to0}\sum_{i=1}^m D\bigl(\gamma_{\mu t}(s_{i-1}),\gamma_{\mu
  t}(s_i)\bigr)
\\
\noalign{\noindent then rescaling in $T_cX$:}
&{}=\lim_{\mu\to0}\frac{1}{\mu t}\sum_{i=1}^m d_0\bigl(\beta_{\mu
  t}(s_{i-1}),\beta_{\mu t}(s_i)\bigr)
\\
\noalign{\noindent then $d_0\leq d$:}
&{}\leq
\lim_{\mu\to0}\frac{1}{\mu t}\sum_{i=1}^m d\bigl(\beta_{\mu
  t}(s_{i-1}),\beta_{\mu t}(s_i)\bigr)
\\
\noalign{\noindent then the fact that $\beta_{\mu t}|_{[s,s']}$ is a geodesic:}
&{}=
\lim_{\mu\to 0}\frac{1}{\mu t}d\bigl(\beta_{\mu t}(s),\beta_{\mu
  t}(s')\bigr)
\\
\noalign{\noindent
and then lemma~\ref{lem-gamma-t-convereges-uniformly}\eqref{item-betas-uniformly-Cauchy-after-rescaling}:}
&{}\leq
\lim_{\mu\to0}\frac{1}{\mu t}
\Bigl(d\bigl(\mu.\beta_{t}(s),\mu.\beta_{t}(s')\bigr)+2\mu t\epsilon\Bigr)
\\
&{}=
2\epsilon+\frac{1}{t} d_0\bigl(\beta_{t}(s),\beta_{t}(s')\bigr)
\\
&{}=
2\epsilon+D\bigl(\gamma_{t}(s),\gamma_{t}(s')\bigr)
\\
&{}<
4\epsilon + D\bigl(\gamma_0(s),\gamma_0(s')\bigr).
\end{align*}  
Now, $\ell_D\bigl(\gamma_0|_{[s,s']}\bigr)$ is the supremum of the left
hand side over all choices of $s_0,\dots,s_m$, so it is bounded above
by the right hand side. Since this holds for all
$\epsilon>0$, $\ell_D\bigl(\gamma_0|_{[s,s']}\bigl)$ is bounded above by
$D\bigl(\gamma_0(s),\gamma_0(s')\bigr)$.  

This shows that $\gamma_0$ is a local geodesic except perhaps for its
parameterization.  Since
$\ell_D\bigl(\gamma_0|_{[s,s']}\bigr)=D\bigl(\gamma_0(s),\gamma_0(s')\bigr)$ for
all short intervals $[s,s']$, it follows that $\gamma_0$ has constant
speed, so it is a local geodesic.
\end{proof}

\begin{lemma}
\label{lem-beta-and-beta-prime-become-closer-faster-than-linearly-as-they-shrink}
Suppose $\beta$ and $\beta'$ are local geodesics from $x$ to $y$.  Given
$\epsilon>0$, there exist $t,\mu\in(0,1]$ such that $\beta_{\mu t}$ and
$\beta_{\mu t}'$ are uniformly within $4\mu t\epsilon$.
\end{lemma}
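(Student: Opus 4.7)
The plan is to run the analysis of the preceding lemmas in parallel for $\beta'$, exploit hypothesis~\eqref{hyp-tangent-spaces-unique-local-geodesics} to identify the two tangent-space limits, and then use Dini's theorem to convert the resulting closeness in $\overline{T_cX}$ back into closeness in $X$ after a second rescaling by $\mu$. More precisely, I would first apply lemma~\ref{lem-no-obstructions-to-deformation} to $\beta'$ to produce the deformation $\beta'_t$ and form the scaled family $\gamma'_t:[0,1]\to T_cX$ exactly as was done for $\beta$. Lemma~\ref{lem-gamma-t-convereges-uniformly}\eqref{item-gammas-uniformly-Cauchy} and lemma~\ref{lem-gamma-0-is-local-geodesic} supply uniform limits $\gamma_0,\gamma'_0$ which are local geodesics in $\overline{T_cX}$. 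Since $\gamma_t(0)=\gamma'_t(0)=\pi(x)$ and $\gamma_t(1)=\gamma'_t(1)=\pi(y)$ for every $t\in(0,1]$, hypothesis~\eqref{hyp-tangent-spaces-unique-local-geodesics} forces $\gamma_0=\gamma'_0$, so $D(\gamma_t(s),\gamma'_t(s))\to 0$ uniformly in $s$ as $t\to 0$.

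Given $\epsilon>0$, I would then choose $t\in(0,1]$ small enough that (a) the conclusion of lemma~\ref{lem-gamma-t-convereges-uniformly}\eqref{item-betas-uniformly-Cauchy-after-rescaling} holds with parameter $\epsilon$ for both $\{\beta_t\}$ and $\{\beta'_t\}$, and (b) $D(\gamma_t(s),\gamma'_t(s))<\epsilon$ for every $s\in[0,1]$. With $t$ fixed, the triangle inequality
\[
d(\beta_{\mu t}(s),\beta'_{\mu t}(s))\leq d(\beta_{\mu t}(s),\mu.\beta_t(s))+d(\mu.\beta_t(s),\mu.\beta'_t(s))+d(\mu.\beta'_t(s),\beta'_{\mu t}(s))
\]
together with~(a) reduces the task to producing a single $\mu\in(0,1]$ for which the middle term is below $2\mu t\epsilon$ uniformly in~$s$.

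The key step is controlling this middle term. Set $f_\mu(s):=d_\mu(\beta_t(s),\beta'_t(s))$ for $\mu\in(0,1]$, so the middle term equals $\mu f_\mu(s)$; the pointwise limit is
\[
f_0(s):=\lim_{\mu\to 0}f_\mu(s)=d_0(\beta_t(s),\beta'_t(s))=t\,D(\gamma_t(s),\gamma'_t(s))<t\epsilon
\]
uniformly in $s$ by~(b), and lemma~\ref{lem-radial-triangles-are-CATx} shows $f_\mu\searrow f_0$ monotonically as $\mu\to 0$. To promote this pointwise convergence to a uniform one I would invoke Dini's theorem on the compact interval $[0,1]$: each $f_\mu$ with $\mu>0$ is continuous (because $\beta_t,\beta'_t$ are continuous and the radial retraction $z\mapsto\mu.z$ is continuous, which follows from the \CATx\ inequality at $c$ applied to triangles $c,z,z'$), and $f_0$ is continuous (because $\gamma_t,\gamma'_t$ are continuous---as remarked after their definition---and $D$ is a metric). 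Dini then supplies $\mu_0>0$ such that $f_\mu(s)<2t\epsilon$ for all $s$ and all $\mu<\mu_0$, whence $d(\mu.\beta_t(s),\mu.\beta'_t(s))<2\mu t\epsilon$ and the triangle inequality gives the advertised bound $4\mu t\epsilon$.

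I expect the main obstacle to be precisely this last step: upgrading the manifestly pointwise statement $d_\mu\to d_0$ along the two curves to a uniform one. Dini's theorem makes this cheap, but only because one can invoke the previously-unused fact that $\gamma_t$ is continuous; without that, the uniform convergence would have to be obtained by some less automatic route, likely a direct long-thin-triangle estimate comparing $\mu.\beta_t$ and $\mu.\beta'_t$ as $\mu\to 0$.
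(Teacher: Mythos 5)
Your proposal is correct and follows essentially the same argument as the paper: produce $\gamma_0'$ in parallel, identify $\gamma_0=\gamma_0'$ via hypothesis~\eqref{hyp-tangent-spaces-unique-local-geodesics}, choose $t$ small so that $\gamma_t,\gamma_t'$ are uniformly $\epsilon$-close and lemma~\ref{lem-gamma-t-convereges-uniformly}\eqref{item-betas-uniformly-Cauchy-after-rescaling} applies, then choose $\mu$ small so that $d_\mu\approx d_0$ on $\beta_t\cup\beta_t'$, and run the same triangle-inequality computation yielding the $4\mu t\epsilon$ bound. The one place you diverge from the paper's text is in how you justify the uniform convergence $d_\mu\to d_0$ on the compact set: the paper simply asserts this is elementary, whereas you invoke Dini's theorem, which in turn requires the continuity of $\gamma_t,\gamma_t'$ --- a fact the paper mentions (as a consequence of hypothesis~\eqref{hyp-radial-geodesic-triangles-CATx}) but says it will not use. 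An alternative that sidesteps $\gamma_t$-continuity entirely is to note that the $d_\mu$ are $1$-Lipschitz in each variable, hence the $f_\mu$ are equi-Lipschitz, hence $f_0$ is automatically Lipschitz and Dini applies, or to argue uniform convergence directly from equicontinuity plus monotonicity; either way this is a minor bookkeeping difference, not a gap.
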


\begin{proof}
Applying the constructions beginning with
lemma~\ref{lem-no-obstructions-to-deformation} to $\beta'$ as we did to $\beta$,
we obtain another local geodesic $\gamma_0'\sset\overline{T_cX}$ from
$\pi(x)$ to $\pi(y)$.  By hypothesis
\eqref{hyp-tangent-spaces-unique-local-geodesics}, this coincides with
$\gamma_0$.  Now suppose $\epsilon>0$.  First we choose $t>0$ small enough that
$\gamma_t$ and $\gamma_t'$ are uniformly within $\epsilon$, and such that the
conclusion of
lemma~\ref{lem-gamma-t-convereges-uniformly}\eqref{item-betas-uniformly-Cauchy-after-rescaling}
holds for both $\beta$ and $\beta'$.  Then we choose $\mu$ small enough that
$d_\mu$ is uniformly within $t\epsilon$ of $d_0$ on $\beta_t\cup\beta_t'$.  (The
uniform convergence $d_\mu\to d_0$ on compact sets in elementary.)
Now we suppose $s\in[0,1]$ and apply
lemma~\ref{lem-gamma-t-convereges-uniformly}\eqref{item-betas-uniformly-Cauchy-after-rescaling}:
\begin{align*}
d\bigl(\beta_{\mu t}(s),\beta_{\mu t}'(s)\bigr)
&{}<
2t\mu\epsilon + d\bigl(\mu.\beta_t(s),\mu.\beta_t'(s)\bigr)
\\
\noalign{\noindent then the definition of $d_\mu$:} 
&{}=
2t\mu\epsilon+\mu d_\mu\bigl(\beta_t(s),\beta_t'(s)\bigr)
\\
\noalign{\noindent and then $d_\mu\approx d_0$:}
&{}<
2t\mu\epsilon +\mu\Bigl(t\epsilon+d_0\bigl(\beta_t(s),\beta_t'(s')\bigr)\Bigr)
\\
&{}=
3t\mu\epsilon+\mu D\bigl(\pi\circ\beta_t(s),\pi\circ\beta_t'(s)\bigr)
\\
&{}=
3t\mu\epsilon + \mu t D\bigl(\gamma_t(s),\gamma_t'(s)\bigr)
\\
&{}<
4t\mu\epsilon.
\end{align*}
\end{proof}

\begin{proof}[Conclusion of the proof of lemma~\ref{lem-unique-local-geodesics}:]
Suppose $x,y\in X-\D$ are not far apart and that $\beta,\beta'$ are local
geodesics from $x$ to $y$.  We must show $\beta=\beta'$.  We apply
lemma~\ref{lem-beta-and-beta-prime-become-closer-faster-than-linearly-as-they-shrink}
with $\epsilon=R/8$, so $\beta_{\mu t}$ and $\beta_{\mu t}'$ are uniformly within
$\mu tR/2$, for some $\mu,t\in(0,1]$.  By our choice of $R$, the closed $\mu
  tR$-ball around each point of $\beta_{\mu t}$ is \CATx.  So
  lemma~\ref{lem-deformation-of-local-geodesics} says that there
  is a unique local geodesic from $\mu t.x$ to $\mu t.y$ that is
  uniformly within $\mu t R/2$ of $\beta_{\mu t}$.  This implies $\beta_{\mu
    t}'=\beta_{\mu t}$.

We obtained $\beta_{\mu t}$ from $\beta$ by deforming $\beta$ through
a family of local geodesics in $X-\D$, and we can reverse this
deformation to recover $\beta$ from $\beta_{\mu t}$.  And similarly for
$\beta'$.  Since the deformation is unique given the
motion of the endpoints, $\beta'_{\mu t}=\beta_{\mu t}$ implies $\beta'=\beta$, finishing
the proof.
\end{proof}

\section{The Positive-Curvature Case}
\label{sec-positive-curvature-case}

\noindent
In this section, we give the positive-curvature analogue of
theorem~\ref{thm-CATx-extension}.  Happily, it follows from the
non-positive-curvature case.

If $\x>0$ then $X_\x$ is the sphere of radius $1/\sqrt\x$.  If $T$ is
a geodesic triangle in a metric space, then it need not have a
comparison triangle $T'$ in $X_\x$, and if it does then $T'$ need not
be unique.  However, if $T$ has perimeter${}<\circum
X_\x=2\pi/\sqrt\x$, then $T'$ exists and is unique.  In this case we
say that $T$ satisfies \CATx\ if \eqref{eq-CAT-chi-inequality} holds.
We call a geodesic space \CATx\ if every geodesic triangle of
perimeter${}<\circum X_\x$ satisfies \CATx.

\begin{theorem}
\label{thm-positive-curvature-case}
Suppose $\x>0$ and that $X$ and $\D$ are as in theorem~\ref{thm-CATx-extension}.  Assume
also that all of $X$ lies within $R<\frac{1}{4}\circum X_\x$ of some
fixed point of $\D$.  Then $X$ is \CATx.
\end{theorem}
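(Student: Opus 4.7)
The plan is to reduce to Theorem~\ref{thm-CATx-extension} by passing to the Euclidean cone over $X$. After rescaling we may take $\x=1$, so $X\subseteq\overline{B}(c_0,R)$ with $R<\pi/2$ and $\diam X<\pi$. Form the Euclidean cone $CX$ with apex $o$, equipped with the cone metric
\[
d_{CX}\bigl((r,x),(r',x')\bigr)=\sqrt{r^2+r'^2-2rr'\cos d_X(x,x')},
\]
no truncation being needed since $d_X<\pi$. Berestovskii's theorem (\cite[II.3.14]{bridson-haefliger}) says that $X$ is \CAT1 if and only if $CX$ is \CAT0, so it suffices to verify the hypotheses of Theorem~\ref{thm-CATx-extension} (with $\x=0$) for the pair $(CX,C\D)$. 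The subset $C\D$ is closed and convex in $CX$ (from convexity of $\D$ in $X$ combined with the cone formula for geodesics) and contains the apex $o$.

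Hypothesis~(a) for $(CX,C\D)$: a geodesic triangle in $CX$ with vertex at $o$ satisfies \CAT0 directly from the cone formula. A triangle with vertex $(r_0,c)$ for $c\in\D$ and $r_0>0$ can be rescaled via the isometry $(r,x)\mapsto(r/r_0,x)$ of $CX$ (which preserves $C\D$) to the case $r_0=1$, and the cone formula then identifies the desired \CAT0 inequality in $CX$ with the \CAT1 inequality for a corresponding geodesic triangle in $X$ with vertex $c\in\D$, furnished by hypothesis~(a) of the present theorem. Hypothesis~(c) for $(CX,C\D)$: for $(r,x)\in CX-C\D$ one has $d_{CX}\bigl((r,x),C\D\bigr)=r\sin d_X(x,\D)$ whenever $d_X(x,\D)<\pi/2$, and the uniformly local \CAT1 balls in $X-\D$ produced by hypothesis~(c) here yield uniformly local \CAT0 balls in $CX-C\D$ via the local cone-to-warped-product correspondence, after possibly shrinking $\lambda$.

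The main obstacle is hypothesis~(b). At $(r_0,c)$ with $r_0>0$ the tangent space $T_{(r_0,c)}CX$ splits as $\R\oplus T_cX$ with the Euclidean product metric, so uniqueness of local geodesics reduces to hypothesis~(b) here. But at $o$ the cone is its own tangent cone: $T_oCX=CX$ itself, so hypothesis~(b) there demands unique local geodesics in $\overline{CX}=CX$, equivalently (via the cone correspondence between geodesics) unique local geodesics in $X$ of length less than $\pi$---which is essentially Lemma~\ref{lem-unique-local-geodesics} in the positive-curvature setting. To break the apparent circularity I would prove this directly by adapting the proof of that lemma: the entire chain of Lemmas~\ref{lem-radial-triangles-are-CATx}--\ref{lem-beta-and-beta-prime-become-closer-faster-than-linearly-as-they-shrink} uses only Alexandrov subdivision, radial triangles (which by the size bound on $X$ have perimeter at most $4R<2\pi$, safely within the spherical range where the \CAT1 comparison is defined), and limits in the Euclidean tangent space $T_cX$---none of which sees the sign of $\x$. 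Once unique local geodesics in $X$ are in hand, hypothesis~(b) at $o$ follows and Theorem~\ref{thm-CATx-extension} delivers \CAT0 for $CX$, hence \CATx\ for $X$.
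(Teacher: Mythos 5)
Your overall idea---reduce to the nonpositively-curved case by taking the Euclidean cone over $X$---is indeed the paper's starting point, but the way you try to carry it out differs from the paper in a critical way, and that difference is where the argument breaks. You propose to apply theorem~\ref{thm-CATx-extension} \emph{globally} to $(CX,C\D)$ and then invoke Berestovskii's theorem; this forces you to confront the apex $o$, where hypothesis~\eqref{hyp-tangent-spaces-unique-local-geodesics} asks for unique local geodesics in $T_oCX=CX$, i.e.\ unique local geodesics in $X$---essentially lemma~\ref{lem-unique-local-geodesics} itself, but for $\x>0$. The paper avoids this circularity entirely: it first invokes the positive-curvature form of the Cartan--Hadamard theorem (Alexander--Bishop, using hypothesis~\eqref{hyp-radial-geodesic-triangles-CATx} plus the radius bound to supply their radial-uniqueness hypothesis) to reduce the claim to $X$ being \emph{locally} \CAT1, then applies theorem~\ref{thm-CATx-extension} only to small closed balls around points $t.c\in C\D-\{v\}$ with $t>0$, so the apex never enters the picture and hypothesis~\eqref{hyp-tangent-spaces-unique-local-geodesics} reduces to the assumption on $T_cX$.

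Your attempt to patch the apex case by ``rerunning'' lemmas~\ref{lem-radial-triangles-are-CATx}--\ref{lem-beta-and-beta-prime-become-closer-faster-than-linearly-as-they-shrink} for $\x>0$ does not work as stated, and the claim that none of these lemmas ``sees the sign of $\x$'' is incorrect in at least two places. First, the paper's construction of $d_0$ rests on the fact that for fixed $w,z$ the function $t\mapsto d_t(w,z)$ is nondecreasing, and this monotonicity is \emph{false} for $\x>0$: applying \CAT1\ to the triangle $c,t.w,t.z$ gives $d(s.w,s.z)\le d\bigl((s/t).(t.w)',(s/t).(t.z)'\bigr)$, but in the sphere the right-hand side divided by $s$ is \emph{larger} than $d\bigl((t.w)',(t.z)'\bigr)/t$, so the chain of inequalities giving $d_s\le d_t$ breaks. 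Second, lemma~\ref{lem-not-much-longer-implies-uniformly-close}---which lemma~\ref{lem-gamma-t-convereges-uniformly} uses crucially---explicitly requires the closed $R$-balls around the local geodesic to be complete and \CAT0; for $\x\le0$ the \CATx\ balls supplied by hypothesis~\eqref{hyp-quantified-local-CATx} are automatically \CAT0, but for $\x>0$ a \CAT1 ball need not be \CAT0, so the lemma does not apply. Both issues would need genuine new arguments, not mere bookkeeping; the paper's route through Alexander--Bishop is precisely how it sidesteps having to prove a positive-curvature analogue of lemma~\ref{lem-unique-local-geodesics}.
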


\begin{proof}
By scaling, it suffices to treat the case $\x=1$ (in which case
$R<\pi/2$).  By the positive-curvature form of the Cartan-Hadamard
theorem \cite[thm. 4.3]{alexander-bishop-curved-curves}, it suffices
to show that $X$ is locally \CAT1.  This uses our hypothesis about $X$
lying in the $R$-ball around a point of $\D$.  (We need a point in
$\D$, not just $X$, so that our
hypothesis~\eqref{hyp-radial-geodesic-triangles-CATx} implies the
radial uniqueness hypothesis of
\cite{alexander-bishop-curved-curves}.)

We can convert
this into a \CAT0 problem by defining $CX$ as the Euclidean cone on
$X$, with vertex say $v$, and $C\D\sset CX$ to be the cone on $\D$.
We regard $X$ as a subset of $CX$, namely the unit sphere around $v$.
The given metric on $X$ may be recovered as the path metric induced on
it by the restriction of $CX$'s metric.

The basic properties of Euclidean cones appear in \cite{bridson-haefliger}.
In particular, \cite[theorem~I.5.10]{bridson-haefliger} states that a geodesic in
$CX$ between $t_1.x_1$ and $t_2.x_2$ ($t_1,t_2>0$, $x_1,x_2\in X$)
misses $v$ and may be projected radially to $X$, yielding a geodesic in $X$.  (This
uses $\diam X<\pi$.)  This establishes a bijection between the
geodesics of $CX$ from $t_1.x_1$ to $t_2.x_2$ and the geodesics
of $X$ from $x_1$ to $x_2$.    Also, suppose $T$ is a geodesic
triangle in $CX-\{v\}$ whose radial projection to $X$ has
perimeter${}<2\pi$.  Then $T$ satisfies \CAT0\ if and only if its
radial projection
satisfies \CAT1.  (See the proof of \cite[thm II.3.18]{bridson-haefliger}.)  Therefore,
proving $X$ locally \CAT1\ is equivalent to proving $CX-\{v\}$
locally \CAT0.

This lets us apply theorem~\ref{thm-CATx-extension}.  Clearly $CX$ is locally \CAT0
away from $C\D$.  Now, an element of $C\D-\{v\}$ has the form $t.c$ with
$t>0$ and $c\in \D$.  We  choose a small closed ball $X_0$ around
$t.c$ in such a way that theorem~\ref{thm-CATx-extension} applies to $X_0$ and
$\D_0:=X_0\cap C\D$.  Indeed, a closed ball of any radius${}<t$ will
do; we now check the hypotheses of theorem~\ref{thm-CATx-extension}.  The completeness
of $X_0$ uses the completeness of $CX$ (see
\cite[prop.~I.5.9]{bridson-haefliger}).  That $X_0$ is a geodesic space and $\D_0$ is
convex follow from the correspondences between geodesics in $CX$
and in $X$.  Hypotheses \eqref{hyp-radial-geodesic-triangles-CATx} and \eqref{hyp-quantified-local-CATx} follow immediately from
the corresponding hypotheses on $X$.  Hypothesis \eqref{hyp-tangent-spaces-unique-local-geodesics} follows from
the corresponding hypothesis on $X$, together with the observation
that each tangent space to $C X-\{v\}$ is $\R$ times a tangent space
to $X$.    So $X_0$ is \CAT0.
\end{proof}

\section{Branched Covers of Riemannian Manifolds: Local Properties}
\label{sec-riem-mflds-local}

\noindent
Suppose \notation{$M$} is a complete Riemannian manifold with
sectional cur\discretionary{-}{-}{}vature${}\leq\x\in\R$ and
\nonotation{$\D$}$\D\sset M$ is locally the union of finitely many
complete totally geodesic submanifolds of codimension~2.  We equip any
covering space \notation{$M_0'$} of \nonotation{$M_0$}$M_0:=M-\D$,
with its natural path metric and complete it to obtain a metric space
\notation{$M'$}.  We call \nonotation{$\pi$}$\pi:M'\to M$ the
\defn{branched cover} associated to $M_0'\to M_0$.  When $M$ is
connected and $M_0'$ is the universal cover of $M_0$ then we call $M'$
the \defn{universal branched cover} of $M$ over $\D$.  Here is the
main result of this section:

\begin{theorem}
\label{thm-riem-mfld-br-cover-locally-CATx}
$M'$ has curvature${}\leq\x$ if and
only if each tangent space $T_{x'}M'$ is \CAT0.
\end{theorem}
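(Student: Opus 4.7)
The forward direction is the standard fact that tangent cones to \CATx\ spaces are \CAT0: if a neighborhood of $x'$ in $M'$ is \CATx, then rescaling its metric by $1/t$ makes it CAT($\x t^2$), and the pointed Gromov--Hausdorff limit as $t\to 0$ is $T_{x'}M'$, so $T_{x'}M'$ is \CAT0 (the \CATx\ inequality passes to such limits).

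For the converse, I plan to apply the extension theorem (Theorem~\ref{thm-CATx-extension} if $\x\le 0$, Theorem~\ref{thm-positive-curvature-case} otherwise) in a small closed ball about each $x'\in M'$. If $x'\notin\pi^{-1}(\D)$, a small enough ball around $x'$ in $M'$ is locally isometric to a ball in $M$, so Rauch comparison gives \CATx\ directly. If $x'\in\pi^{-1}(\D)$, take $X$ to be a small closed ball about $x'$ in $M'$ and set $\tilde\D:=X\cap\pi^{-1}(\D)$; this is closed and, for $X$ small, convex, since $\D$ is locally a union of finitely many totally geodesic submanifolds through $\pi(x')$. Hypothesis~(B) of the extension theorem follows from the \CAT0 assumption on tangent spaces, since local geodesics are unique in \CAT0 spaces and in their metric completions. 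Hypothesis~(C) follows from Riemannian comparison on $M-\D$, lifted isometrically to $M'-\pi^{-1}(\D)$; a compactness argument on $X$ furnishes a uniform $\lambda>0$.

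The main obstacle is hypothesis~(A): every geodesic triangle in $X$ with a vertex on $\tilde\D$ must satisfy \CATx. I plan to establish this by the ``delicate double induction'' flagged by the author. The outer induction is on $\dim M$ (equivalently on the maximum codimension of strata of $\D$ meeting $X$); the base case, where $\D$ is zero-dimensional in a surface, is handled by direct cone analysis. Lemma~\ref{lem-tangent-spaces-are-branched-covers-of-tangent-spaces} identifies the tangent space $T_c M'$ at any branch vertex $c$ as a branched cover of the Euclidean space $T_c M$ over a lower-dimensional arrangement, and the outer inductive hypothesis, together with the \CAT0 assumption, gives local \CAT0 in a cone neighborhood of the vertex of $T_c M'$. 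The inner induction follows the pattern of Lemma~\ref{lem-radial-triangles-are-CATx}: given a triangle $T$ with designated vertex $c\in\tilde\D$, use Alexandrov subdivision (Lemma~\ref{lem-Alexandrov-subdivision}) to reduce to the case in which both edges from $c$ are radial geodesics, then deform the far vertices toward $c$ along $\overline{cy}$ and $\overline{cz}$ (Lemma~\ref{lem-deformation-of-local-geodesics} controls the variation) until the triangle shrinks into a neighborhood in which the \CAT0 tangent-cone structure forces the \CATx\ inequality, and finally propagate the inequality back to the original triangle by the Alexandrov patchwork argument of \cite[fig.~4.2]{bridson-haefliger}. I expect the hardest points to be maintaining a uniform constant $\lambda$ across the induction steps, and handling triangles whose non-branch vertices themselves lie close to strata of several lower codimensions, which forces the argument to interleave nested stratifications rather than deal with one at a time.
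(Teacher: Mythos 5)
Your forward direction is fine (the paper simply cites the standard result \cite[thm.~II.3.19]{bridson-haefliger}). The converse, however, has a genuine gap right at the start, in the choice of $\D$ for the extension theorem. You take $\tilde\D := X\cap\pi^{-1}(\D)$ and assert it is convex ``since $\D$ is locally a union of finitely many totally geodesic submanifolds.'' A union of totally geodesic submanifolds is generally \emph{not} convex: already two transverse codimension-$2$ planes through the origin in $\R^4$ fail, since a geodesic joining a point of one to a point of the other leaves the union. So Theorem~\ref{thm-CATx-extension} cannot be applied with this $\tilde\D$.

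The paper's fix is to take $\D$ equal only to the deepest stratum $D'_{x'}\cap M_i'$ (where $i$ is the codimension of the stratum containing $x'$), which \emph{is} convex, and to run an induction on $i$ rather than on $\dim M$. This choice shifts the difficulty: with the deepest stratum as $\D$, the set $X-\D$ still contains points arbitrarily near strata of intermediate codimension, where the metric is singular and Riemannian comparison in $M$ does not directly supply the CAT($\chi$) balls needed. Consequently hypothesis~(C) --- the uniformly-local CAT($\chi$) condition --- is the hard one, not~(A) as you expect; the paper's ``delicate double induction'' is entirely in the service of~(C), running an outer induction on the codimension $i$ and an inner descending induction on the codimension $j<i$ of the nearby stratum, comparing $r(y)/|y|$ to a threshold and passing to progressively shallower strata when $y$ is too close to a deeper one. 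Hypothesis~(A) is in fact handled quickly by Alexandrov subdivision along the edge opposite the $\D$-vertex, following Charney--Davis \cite[lemmas~I.5.5--6]{CD}, so the subdivide-and-propagate machinery you reserve for~(A) is aimed at the wrong target. Your compactness argument for~(C) would work only because your (non-convex) $\tilde\D$ artificially excludes the problematic nearby strata from $X-\tilde\D$; once $\D$ is chosen convex, the uniform $\lambda$ no longer comes for free.
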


\noindent
We remark that $\D$ could be more general, for example the branch loci
considered by Charney and Davis \cite{CD}.  The current generality is
enough for our applications.

In this section and the next a prime indicates an object in the
branched cover, for example, $\D'$ means $\pi^{-1}(\D)$.  $M$ is
naturally stratified by $\D$, and we write \notation{$M_i$} for the
stratum of codimension~$i$.  This extends our notation $M_0$.  We
write $M_i'$ for $\pi^{-1}(M_i)$.  It is easy to see that each
$M_i'\to M_i$ is a covering map, which we will use implicitly whenever
we lift paths from $M$ to $M'$.

Applying  theorem~\ref{thm-riem-mfld-br-cover-locally-CATx} requires understanding the tangent spaces
$T_{x'}M'$, and the obvious result holds: $T_{x'}M'$ is a branched
cover of $T_xM$.  This shows that the question of its \CAT0-ness is
essentially a problem in piecewise-Euclidean geometry.  This is the
point of the theorem: to reduce a local curvature condition to an
infinitesimal one.

To formulate this precisely, let $x'\in M'$, set $x=\pi(x')$ and
suppose $r>0$ is small enough that the exponential map identifies
$B_r(0)\sset T_xM$ with $B_r(x)$ and $B_r(0)\cap T_x\D$ with
$B_r(x)\cap\D$.  Then the covering map $B_r(x')-\D'\to B_r(x)-\D$
corresponds to a covering space of $B_r(0)-T_x\D$ and therefore to a
covering space of $T_xM-T_x\D$.  We call this last the covering space
of $T_xM-T_x\D$ at $x'$, and its metric completion the branched
covering of $T_xM$ at $x'$. 

\begin{lemma}
\label{lem-tangent-spaces-are-branched-covers-of-tangent-spaces}
$T_{x'}M'$ is the branched covering of $T_xM$ over $T_x\D$ at $x'$. 
\qed
\end{lemma}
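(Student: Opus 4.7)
The plan is to build an explicit isometry $\Phi$ from the branched cover $\tilde B$ of $T_xM$ at $x'$ onto $T_{x'}M'$, using $\exp_x$ to send formal tangent vectors to honest geodesic germs. By the very definition of $\tilde B$, the diffeomorphism $\exp_x:B_r(0)\to B_r(x)$ lifts to a homeomorphism $\widetilde{\exp}:\tilde B_r\to B_r(x')$, where $\tilde B_r\subset\tilde B$ is the preimage of $B_r(0)$; indeed, the cover defining $\tilde B$ was pulled back along $\exp_x$ from the cover $B_r(x')-\D'\to B_r(x)-\D$. Since $T_x\D$ is a finite union of linear subspaces of $T_xM$, the radial dilation action on $T_xM$ preserves $T_x\D$ and therefore lifts canonically to $\tilde B$, commuting with deck transformations and scaling the pulled-back flat metric by the obvious factor. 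For $\xi\in\tilde B$ I then define $\Phi(\xi)$ to be the germ at $t=0$ of the path $t\mapsto\widetilde{\exp}(t\xi)$.

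The first thing to verify is that $\Phi(\xi)$ is genuinely a geodesic germ at $x'$. Its projection to $M$ is the radial Riemannian geodesic $t\mapsto\exp_x(tv_\xi)$, where $v_\xi\in T_xM$ is the image of $\xi$. If $v_\xi\notin T_x\D$ the projected path stays in $M_0$ for small $t>0$ and its unique lift through $x'$ to the Riemannian covering $M_0'\to M_0$ is a geodesic; if $v_\xi$ lies in one of the strata of $T_x\D$, the path lifts via the covering $M_i'\to M_i$ of that stratum, and $\pi$ preserves lengths, so the lift remains a geodesic in $M'$. Injectivity of $\Phi$ is immediate because $\widetilde{\exp}$ is a homeomorphism. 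For surjectivity, any geodesic germ $\gamma$ at $x'$ projects under $\pi$ to a local geodesic germ at $x$ which, by the uniqueness of Riemannian geodesics, must be radial, of the form $t\mapsto\exp_x(tv)$ for some $v\in T_xM$; the germ $\gamma$ itself then selects the correct preimage $\xi$ of $v$ in $\tilde B$, and $\Phi(\xi)=\gamma$ by construction.

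The substantive step is to show that $\Phi$ is an isometry, i.e.
\[
D\bigl(\Phi(\xi),\Phi(\eta)\bigr)=\limsup_{t\to 0}\frac{d_{M'}\bigl(\widetilde{\exp}(t\xi),\widetilde{\exp}(t\eta)\bigr)}{t}=d_{\tilde B}(\xi,\eta).
\]
The Riemannian metric on $M_0'$ pulled back via $\widetilde{\exp}$ to $\tilde B_r-T_x\D$ coincides with $\exp_x^{*}g_M$ viewed on $T_xM$, and standard normal-coordinate estimates give $\exp_x^{*}g_M=g_{\mathrm{Euc}}+O(|v|^2)$. Together with the scaling identity $d_{\tilde B}(t\xi,t\eta)=t\cdot d_{\tilde B}(\xi,\eta)$, this suggests the two rescaled path metrics agree in the limit $t\to 0$. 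The main obstacle is promoting this pointwise agreement of Riemannian metrics to an agreement of the corresponding completed path metrics: almost-minimizing paths in $M'$ computing $d_{M'}\bigl(\widetilde{\exp}(t\xi),\widetilde{\exp}(t\eta)\bigr)$ may approach $\D'$ and so cannot be pushed across $\widetilde{\exp}$ naively. I would address this by restricting to paths whose rescaled length is bounded, pushing those across $\widetilde{\exp}$ to paths in $\tilde B$ of length differing by a factor $1+O(t^2)$, and running the same comparison in reverse; dividing by $t$ absorbs the error and yields the desired equality.
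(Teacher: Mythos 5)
The paper offers no proof of this lemma; it simply calls it ``an easy application of the stratum-wise covering space property,'' so there is no official argument to measure your proposal against. Your overall strategy --- build $\widetilde{\exp}$, push the radial cone structure through it, then verify the $D$-metric matches --- is the natural one and should succeed, but two of the claims you need are asserted rather than proved, and they are precisely where the content of the lemma lives. You take the homeomorphism $\widetilde{\exp}:\tilde B_r\to B_r(x')$ between metric \emph{completions} as immediate, yet the definition only supplies a homeomorphism of the open parts $\tilde B_r-T_x\D\to B_r(x')-\D'$; extending it across the branch loci requires exactly the bi-Lipschitz comparison of $\exp_x^*g_M$ with the Euclidean metric that you invoke later for the isometry. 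Establishing that comparison first --- and deriving both the extension of $\widetilde{\exp}$ and the rescaled-metric convergence from it --- would remove the apparent circularity. The isometry estimate itself is left as a plan rather than a proof; note though that your worry about almost-minimizers approaching $\D'$ is not actually an obstacle, since once $\widetilde{\exp}$ is extended it carries those paths too, and path lengths are stable under completion on both sides.

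There is also a gap in surjectivity: you assert that a geodesic germ $\gamma$ at $x'$ projects under $\pi$ to a local geodesic germ at $x$, but a length-preserving projection of a geodesic need not be a geodesic. One way to close this: for small $\epsilon$ the radial Riemannian geodesic from $x$ to $\pi(\gamma(\epsilon))$ lifts through $x'$ to a path of equal length, and because the small ball around $x'$ covers stratum-wise and the relevant sheet is determined, this lift must end at $\gamma(\epsilon)$ itself; hence $d_{M'}(x',\gamma(\epsilon))\le d_M(x,\pi(\gamma(\epsilon)))$, and combined with the 1-Lipschitz property of $\pi$ this forces $\ell(\pi\circ\gamma)=d_M(x,\pi(\gamma(\epsilon)))$, so $\pi\circ\gamma$ is a geodesic and therefore radial. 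All of these gaps are fillable along the lines you indicate, but they are exactly where the proof's substance sits, and they should be written out rather than deferred.
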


\noindent We omit the proof because is an easy application of of the
stratum-wise covering space property.

\begin{proof}[Proof of theorem~\ref{thm-riem-mfld-br-cover-locally-CATx}:]
The ``only if'' assertion is just the fact that a metric space with
curvature bounded above has \CAT0 tangent spaces
\cite[thm.~II.3.19]{bridson-haefliger}.  So we assume all tangent
spaces are \CAT0 and prove $M'$ locally \CATx.  For $x\in M$ define
\notation{$r(x)$} as the supremum of all $r$ such that the following
hold:

(i) The exponential map identifies $B_{3r}(0)\sset T_xM$ with
$B_{3r}(x)\sset M$ and $B_{3r}(0)\cap T_x\D$ with $B_{3r}(x)\cap\D$;

(ii) $B_{3r}(x)$ is convex in $M$;

(iii) if $\x>0$ then $r<\frac{1}{5}\circum X_\x$.

\noindent
We define $D_x$ as $\overline{B_{r(x)}(x)}$, and for $x'\in M'$ lying
over $x$ we define $D'_{x'}$ as $\overline{B_{r(x)}(x')}$.  Obviously
it will suffice to prove the following for all $i$. 

\marginlabel{Claim $\calC_i$}{\it Claim $\calC_i$: for all $x'\in M_i'$, $D'_{x'}$ is \CATx.}  

We
prove this by induction of $i$.  In the base case $i=0$ we are
asserting that $\overline{B_{r(x)}(x')}$ is \CATx\ for any $x'\in
M'-\D$, which holds because it projects isometrically to
$\overline{B_{r(x)}(x)}\sset M$, which is \CATx\ because $M$ has
sectional curvature${}\leq\x$.  See \cite[thm.~II.1A.6]{bridson-haefliger}.

For the inductive step, fix $x'\in M_i'$.  The rest of the proof will
address $D'_{x'}$, so we will write \notation{$D'$} for it and
\notation{$D_j'$} for $D'\cap M_j'$ and similarly for \notation{$D$}
and \notation{$D_j$}.  We will prove $D'$ is \CATx\ by applying
theorem~\ref{thm-CATx-extension} with $X=D'$ and $\D=D_i'$.  To do
this we must verify the assumptions of that theorem.  (If $\x>0$ then
we use theorem~\ref{thm-positive-curvature-case} in place of
theorem~\ref{thm-CATx-extension}.  This is the reason for (iii) above,
which implies $r(x)\leq\frac{1}{5}\circum X_\x$.)

First, $D'$ is complete because it is closed in $M'$.  It is a
geodesic space because $M'$ is
(theorem~\ref{thm-br-cover-is-geodesic-space}, whose proof is
independent of the current theorem) and $D'$ is convex in
$M'$.  To see this convexity, suppose two points $y'$, $z'$ of it are
joined by a geodesic $\gamma'$ in $M'$.  Since $\ell(\gamma')\leq 2r(x)$,
$\gamma'$ lies entirely in $B_{3r}(x')$, so it projects into $B_{3r}(x)$.
If $\pi(\gamma')$ leaves $D$ then we can shorten it by homotoping it (rel
endpoints) along geodesics toward $x$.  Because of the correspondence
between $T_x\D$ and $\D$, this homotopy respects strata, so it lifts
to a homotopy from $\gamma'$ to a shorter path from $y'$ to $z'$.  This is
absurd, so $\pi(\gamma')$ lies in $D$, so $\gamma'$ lies in $D'$, so $D'$ is
convex in $M'$.  A similar argument shows that $D_i'$ is convex in
$D'$.

To prove that condition
\eqref{hyp-radial-geodesic-triangles-CATx} of
theorem~\ref{thm-CATx-extension} holds we follow
\cite[lemmas~I.5.5--6]{CD}.  Suppose $T$
is a geodesic triangle in $D'$ with one vertex in $D_i'$, and call the
opposite edge $E$.  If $E$ lies entirely in one stratum, then we may
subdivide it so that each triangle in the corresponding subdivision of
$T$ projects isometrically into $M$.  Then Alexandrov's lemma shows
that $T$ satisfies \CATx.  Taking limits treats the case in which $E$
lies in one stratum except for its endpoints.  Then another use of
Alexandrov's lemma treats the general case.
Condition~\eqref{hyp-tangent-spaces-unique-local-geodesics} holds 
because all tangent spaces are complete (lemma~\ref{lem-tangent-spaces-are-branched-covers-of-tangent-spaces}) and we are
assuming they are \CAT0.

The real content of the proof is verifying hypothesis
\eqref{hyp-quantified-local-CATx} of
theorem~\ref{thm-CATx-extension}.  For convenience we write
\notation{$|y'|$} for $d(y',D_i')$ when $y'\in D'$, and similarly for
$y\in D$.  We must exhibit $\lambda>0$ such that
$\overline{B_{\lambda|y'|}(y')}$ is \CATx\ for all $y'\in D'-D_i'$.
By our induction hypothesis we know $\overline{B_{r(y)}(y')}$ is
\CATx\ for all $y\in D'-D_i'$, but this is not immediately useful.
The difficulty is that $y'$ can be far away from $D_i'$, yet very
close to a stratum of lower dimension than the one containing $y'$.
Then $r(y)$ is much smaller than $|y|$.  To deal with this situation
we observe that such a $y'$ is very close to a point $z'$ of this
lower-dimensional stratum, around which we will prove by induction on
stratum dimension that there is a fairly large \CATx\ ball.  Since
$y'$ is so close to $z'$, it follows that a smaller but still fairly
large ball around $y'$ is also \CATx.  Our precise statement of this
idea is better stated and proven in $D$ rather than~$D'$:

{\it Claim: Given $j<i$ and $\lambda>0$, there exists $\lambda'>0$
  such that every $y\in D_j$ either has $r(y)\geq\lambda'|y|$ or else
  lies in}
$$
U:=
\bigcup_{z\in Z}B_{\lambda|z|}(z)
\quad\hbox{\it where}\quad
Z=\bigcup_{j<k<i}D_k.
$$ This is easy to prove with $T_xM$ and $T_x\D$ in place of $D$ and
$D\cap\D$, as follows.  The function $y\mapsto r(y)/|y|$ is not
continuous, but its restriction to each stratum is.  Let $K$ be the
unit sphere in $(T_xM_i)^\perp\sset T_xM$, minus its intersection
with $U$.  Let $\lambda'>0$ be a lower bound for the restriction of
$r(y)/|y|$ to the $j$-dimensional stratum in $K$, which exists by
continuity and compactness.  So $r(y)\geq\lambda'|y|$ holds for all
$y\in K$.  It follows that $r(y)\geq\lambda'|y|$ for all $y$ in the
$j$-dimensional stratum of $T_x M-U$, because $r(y)/|y|$ is invariant
under translation in the $T_xM_i$ direction and scaling in the
$(T_xM_i)^\perp$ direction.  ($U$ is also invariant under these
transformations.)  This proves the claim with $T_xM$ and $T_x\D$ in
place of $D$ and $D\cap\D$.  The actual claim follows because the
exponential map $\overline{B_{r(x)}(0)}\to D_x$ is bilipschitz.

Now we are ready to prove that hypothesis
\eqref{hyp-quantified-local-CATx} of
theorem~\ref{thm-CATx-extension} holds.  We use another induction,
to be proven by descending induction on $j=i-1,\dots,0$.  Since $j<i$
we may assume claim $\calC_j$ is known.  Claim $\calD_0$ is exactly
the hypothesis \eqref{hyp-quantified-local-CATx} we want to
verify.

{\it Claim $\calD_j$ $(j=i-1,\dots,0){:}$ there exists $\lambda_j>0$
  such that for all $y'\in D_j'\cup\dots\cup D_{i-1}'$,
  $\overline{B_{\lambda_j|y'|}(y')}$ is \CATx.}

The base case is the largest $j<i$ for which $D_j\neq\emptyset$.  Then
the claim just proven states that there is a positive lower bound for
$r(y)/|y|$ on $D_j$, which we take for our $\lambda_j$.  By claim
$\calC_j$, $\overline{B_{r(y)}(y')}$ is \CATx, and since
$\lambda_j|y'|\leq r(y)$, we have proven $\calD_j$.

Now for the inductive step; suppose $\calD_{j+1}$ is known.  Observe
that if $y'\in D_j'$ lies within $\frac{1}{3}\lambda_{j+1}|z'|$ of some
$z'\in D_{j+1}'\cup\dots\cup D_{i-1}'$, then
$$
|y'|
<
|z'|+\frac{\lambda_{j+1}}{3}|z'|
\leq
2|z'|.
$$
(Obviously we may take $\lambda_{j+1}\leq3$ without loss.)  Therefore
$$
\overline{B_{\lambda_{j+1}|y'|/3}(y')}
\sset
\overline{B_{2\lambda_{j+1}|z'|/3}(y')}
\sset
\overline{B_{\lambda_{j+1}|z'|}(z')}.
$$
The right side is \CATx\ by the inductive hypothesis $\calD_{j+1}$, so
the left side is also.  Now we apply the claim proven above with
$\lambda=\lambda_{j+1}/3$, obtaining $\lambda'$.  Arguing as in the
base case shows that $\overline{B_{\lambda'|y'|}(y')}$ is \CATx\ for
any $y'\in D_j'$ that \emph{doesn't} lie at
distance${}<\frac{1}{3}\lambda_{j+1}|z'|$ from some $z'\in
D_{j+1}'\cup\dots\cup D_{i-1}'$.  So we have proven $\calD_j$ with
$\lambda_j:=\min\{\lambda_{j+1}/3,\lambda'\}$. 
\end{proof}

\begin{remark}
Theorem~5.3 of \cite{CD} is similar to our
theorem~\ref{thm-riem-mfld-br-cover-locally-CATx}.  However, there
is a difficulty with the proof, which we alluded to in
\cite{Allcock-asphericity} and understand better now.  In their
notation, they first show in lemmas~I.5.5--6 that every $\xtilde$ in
the branched cover has a neighborhood $\Utilde$ in which every
``geodesic hinge'' based at $\xtilde$ ``spreads out''.  This is
equivalent to geodesic triangles in $\Utilde$ with a vertex at $\xtilde$
satisfying \CATx.  Then they consider a geodesic triangle in $\Utilde$, use
the fact that the hinges at each of its corners spread out, and appeal
to the equivalence of the hinge-spreading condition with the
\CATx\ condition.

But while the edges at a given corner do diverge in a neighborhood of
that corner, they might converge while still within $\Utilde$.
Another way to say this is that the neighborhoods of the vertices
admitting good local descriptions can be much smaller than $\Utilde$.
Most of theorem~\ref{thm-riem-mfld-br-cover-locally-CATx}'s proof
amounts to wrestling with this issue.  No argument using only the
spreading of every hinge in a neighborhood of its basepoint can prove
the local \CATx\ property, because such an argument would also prove
that our example~\ref{eg-cusped-cone} is \CAT0.
\end{remark}

\section{Branched Covers of Riemannian Manifolds: Global Properties}
\label{sec-Riemannian-global}

\noindent
We continue to use the notation of the previous section, including
$M_i$ and $M_i'$ for strata in $M$ and $M'$.  After proving that $M'$
is a geodesic space (theorem~\ref{thm-br-cover-is-geodesic-space}) we
develop our approach of studying the universal cover of $M_0$ by
relating it to the universal branched cover.
Lemma~\ref{lem-completion-is-homotopy-equivalence} shows that $M_0'\to
M'$ is a homotopy-equivalence under some conditions on the local
topology of the branching.  The main case of interest is when $M_0'$
is the universal cover, but really all that is required is that $M_0'$
be ``locally universal''.  After that we prove our main result on
branched covers of Riemannian manifolds, that $M_0'$ is contractible
and $M'$ is \CATx\ under reasonable hypotheses
(theorem~\ref{thm-riem-mfld-br-cover-globally-CATx}).

\begin{theorem}
\label{thm-br-cover-is-geodesic-space}
Each component of $M'$ is a geodesic space.
\end{theorem}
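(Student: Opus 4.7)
Fix a component of $M'$ and two points $x', y'$ in it; we build a minimizing geodesic from $x'$ to $y'$ by pushing everything down to $M$, extracting a limit there, and lifting it back. First, $M_0'$ is a Riemannian manifold (the pullback of $M_0$'s metric under the local isometry $M_0' \to M_0$), hence a length space, so its completion $M'$ is a complete length space. There are therefore rectifiable paths $\gamma_n : [0,1] \to M'$ from $x'$ to $y'$ with $\ell(\gamma_n) \to d(x',y')$; by perturbing slightly (the hypothesis that $\Delta$ is locally a finite union of codimension-two submanifolds lets us push the interior of each $\gamma_n$ off $\Delta'$ with negligible length cost) and reparametrizing proportionally to arclength, we may take each $\gamma_n$ to lie in $M_0'$ except possibly at its endpoints and to be $\ell(\gamma_n)$-Lipschitz.

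Since $\pi|_{M_0'}$ is a local isometry and therefore preserves length, the pushdowns $\pi\circ\gamma_n : [0,1] \to M$ are uniformly Lipschitz paths from $\pi(x')$ to $\pi(y')$ with the same lengths. Because $M$ is a complete Riemannian manifold, it is proper, so Arzel\`a-Ascoli produces a uniformly convergent subsequence $\pi\circ\gamma_n \to \bar\gamma : [0,1] \to M$, a path of length at most $d(x',y')$. I would then construct a lift $\tilde\gamma : [0,1]\to M'$ of $\bar\gamma$ with $\tilde\gamma(0)=x'$ piece by piece: partition $[0,1]$ so that on each closed subinterval $\bar\gamma$ lies in a small geodesically convex normal neighborhood $U\subseteq M$ in which $U\cap\Delta$ is the standard codimension-two Euclidean model; apply ordinary covering-space path-lifting on the open portions where $\bar\gamma$ stays in $M_0$; and at a time $t_0$ with $\bar\gamma(t_0)\in\Delta$, define $\tilde\gamma(t_0)$ as the limit in $M'$ of $\gamma_n(t_0)$. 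Since $\pi|_{M_0'}$ is a local isometry one gets $\ell(\tilde\gamma)=\ell(\bar\gamma)\leq d(x',y')$; and because each $\gamma_n$ ends at $y'$, so does $\tilde\gamma$, which therefore realizes $d(x',y')$.

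\textit{Main obstacle.} The technical heart is justifying the limit definition of $\tilde\gamma$ at times $t_0$ where $\bar\gamma(t_0)\in\Delta$. Since $M'$ need not be locally compact at boundary points---as example~\ref{eg-cusped-cone} makes vivid---the values $\gamma_n(t_0)$ are not a priori forced to converge even though they lie in a bounded set. What saves us is the continuity of each individual $\gamma_n$ as a path in $M'$: using the uniform convergence of $\pi\circ\gamma_n$ to $\bar\gamma$ together with the local product structure of $M$ near a stratum of $\Delta$, one shows that on a small neighborhood of $t_0$ the paths $\gamma_n$ eventually lie in a single connected component of the preimage of a small neighborhood of $\bar\gamma(t_0)$, within which the branched cover has a unique point in $\pi^{-1}(\bar\gamma(t_0))$ to which $\gamma_n(t_0)$ is forced to converge. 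Checking that this monodromy-type argument glues coherently with the ordinary covering-space lifts on the complementary open subintervals, and that the pieces match at the partition points, is the step that requires real care; everything else is routine.
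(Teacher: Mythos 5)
Your setup — minimize lengths of paths staying in $M_0'$ except at endpoints, project to $M$, extract a limit $\bar\gamma$ via Arzel\`a--Ascoli, and lift it back — matches the paper's. But you have correctly flagged, and then not resolved, the step on which the whole argument turns. The gap has two related parts.

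First, you never establish any structural control over $\bar\gamma$. Your lifting scheme implicitly assumes $\bar\gamma$ meets $\D$ in a manageable way (e.g.\ at isolated times, or along finitely many intervals), but nothing in your argument rules out $\bar\gamma$ oscillating in and out of $\D$ on a complicated set, which would defeat the partition-and-lift strategy. The paper proves, before doing any lifting, that $\bar\gamma$ is a local geodesic in each stratum it visits (using total geodesy of the strata and a shortening argument), and from this deduces that its domain is covered by finitely many intervals on each of which it lies in a single stratum. This finiteness is what makes any lifting argument tractable, and it is absent from your proposal.

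Second, your definition $\tilde\gamma(t_0):=\lim_n\gamma_n(t_0)$ at times when $\bar\gamma(t_0)\in\D$ is not justified: the $\gamma_n$ are different paths with no a priori relationship among their lifts, so uniform convergence of the projections $\pi\circ\gamma_n$ gives no control on the $M'$-distances $d(\gamma_n(t_0),\gamma_m(t_0))$ — precisely because points in many different sheets project to the same small ball in $M$. Your ``monodromy-type argument'' is asserted, not proved, and it is not clear it can be proved without the structural fact above. The paper sidesteps this entirely: it fixes a single $\gamma_n$ close to $\bar\gamma$, builds a homotopy $\gamma_n\simeq\bar\gamma$ rel endpoints that stays in $M_0$ except at endpoints and the final slice, lifts that homotopy through $M_0'\to M_0$, and then uses completeness of $M'$ to extend the lift to the final slice. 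Moving one path along short geodesics to the limit gives a Cauchy family essentially for free, whereas forcing infinitely many unrelated lifts to converge does not. You should replace the pointwise-limit construction with this homotopy-lifting device, and you must first prove the piecewise-local-geodesic structure of $\bar\gamma$ before the homotopy can be built.
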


\begin{proof}
Let $y',z'\in M'$ and let $\gamma_n'$ be a sequence of paths joining
them in $M'$, lying in $M_0'$ except perhaps for their endpoints, with
$\ell(\gamma_n')\to d(y',z')$.  We may suppose they are parameterized
proportionally to arclength.  Project them to paths $\gamma_n$ in $M$,
use the Arzel\`a-Ascoli theorem to pass to a uniformly convergent
subsequence, and write $\gamma$ for the limit.  If $\gamma$ maps some
interval entirely into one stratum, then it is a local geodesic on
that stratum.  Otherwise, for $\gamma_n$ sufficiently close to
$\gamma$, $\gamma_n'$ could be shortened to a path from $y'$ to $z'$
of length${}<\ell(\gamma)=d(y',z')$, a contradiction.  It follows that
every point of $\gamma$ has a neighborhood in $\gamma$
that meets${}\leq3$ strata.  Therefore the domain of $\gamma$ is covered
by finitely many intervals, on the interior of each of which it is a
local geodesic and lies entirely in one stratum.  Now one can see that
for $\gamma_n$ sufficiently close to $\gamma$, we may homotope
$\gamma_n$ to $\gamma$, rel endpoints, such that the homotopy maps
into $M_0$ except perhaps for the endpoints of the paths and the final
path $\gamma$.  Using the covering map $M_0'\to M_0$ and the metric
completion $M_0'\to M'$, we may lift this to a homotopy rel endpoints
from $\gamma_n'$ to some path $\gamma'$ lying over $\gamma$.  This is
the desired geodesic $\overline{y'z'}$.
\end{proof}

\begin{lemma}
\label{lem-completion-is-homotopy-equivalence}
Suppose that for all $x\in M$, 
\begin{enumerate}
\item
\label{hyp-aspherical-local-topology}
$T_xM-T_x\D$ is aspherical, and
\item
\label{hyp-local-pi1s-injective}
the covering space of $T_xM-T_x\D$ at any preimage of $x$ is a universal covering.
\end{enumerate}
Then the
inclusion $M_0'\to M'$ is a homotopy equivalence.
\end{lemma}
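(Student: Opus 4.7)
The plan is to establish local contractibility of $M'$ together with its $M_0'$-intersections, then deduce that $i: M_0' \hookrightarrow M'$ is a weak homotopy equivalence (hence, by Whitehead for ANRs, a genuine homotopy equivalence).

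First, I would pin down the local picture at a point $x' \in M'$ over $x \in M$. By lemma~\ref{lem-tangent-spaces-are-branched-covers-of-tangent-spaces}, $T_{x'}M'$ is the metric completion of a covering space $W$ of $T_xM - T_x\D$, and hypothesis~(b) identifies this covering with the universal cover. Hypothesis~(a) then forces $W$ to be contractible. The exponential map $B_r(0) \subset T_xM \to B_r(x) \subset M$ identifies $T_x\D$ with $\D$ locally, so for $r$ small it induces a homeomorphism between the ball $B_r(x') \subset M'$ and the corresponding ball about the basepoint $\tilde 0 \in T_{x'}M'$, carrying $B_r(x') \cap M_0'$ onto the punctured ball.

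Next I would prove both balls contractible. Scaling $y \mapsto ty$ on $T_xM$, for $t \in (0,1]$, lifts to a continuous family of self-homeomorphisms $\tilde s_t$ of $W$, and the cone-type estimate $d(\tilde s_t(\tilde y), \tilde 0) \leq t\,|\pi(\tilde y)|$ extends this family continuously to $[0,1] \times T_{x'}M' \to T_{x'}M'$ with $\tilde s_0 \equiv \tilde 0$, contracting any $B_r(\tilde 0)$ to $\tilde 0$. For the punctured ball, $B_r(0) \cap (T_xM - T_x\D)$ deformation retracts radially onto the link and so has the same $\pi_1$ as $T_xM - T_x\D$; its preimage in $W$ is therefore its own universal cover, contractible by~(a). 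Thus every $x' \in M'$ has arbitrarily small open neighborhoods $U$ for which $U$ and $U \cap M_0'$ are both contractible (trivially so when $x' \in M_0'$).

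Finally I would prove $\pi_n(M', M_0') = 0$ for all $n \geq 0$ by the standard cellular-approximation argument. Given $f: (D^n, \partial D^n) \to (M', M_0')$, triangulate $D^n$ finely enough that each closed simplex maps into one of the basic neighborhoods, and arrange $U_\tau \supset U_\sigma$ whenever $\sigma$ is a face of $\tau$. Build $g: D^n \to M_0'$ skeleton by skeleton: perturb each vertex lying in $\D'$ into $U_v \cap M_0'$ (non-empty by density of $M_0'$); extend over each simplex $\sigma$ using the contractibility of $U_\sigma \cap M_0'$, which contains the already-pushed $g|_{\partial \sigma}$. The homotopies assemble within the $U_\sigma$ into a global $f \simeq g$ rel $\partial D^n$. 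Since $M'$ is a locally compact, locally contractible, finite-dimensional metric space it is an ANR, so the weak equivalence upgrades to a homotopy equivalence. The main obstacle is the contractibility of $T_{x'}M'$ itself: its completion can have a genuinely stratified, lower-dimensional boundary (when $T_x\D$ has several codim-$2$ components, the completion adds strata, not merely an apex), so one must verify carefully that the lifted scaling action extends continuously to all of $T_{x'}M'$ and contracts it to the basepoint.
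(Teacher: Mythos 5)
Your approach is genuinely different from the paper's. The paper removes one stratum at a time, doing a double induction on $\dim M$ and on stratum codimension: it writes $\Sigma_i = M_0\cup\dots\cup M_i$, proves each $\Sigma_{i-1}'\to\Sigma_i'$ is a homotopy equivalence by constructing an explicit fiber-preserving contraction of the normal sphere bundle $S'\to M_i'$ (whose fibers are shown contractible by the dimension induction applied to the link), and assembles the contraction into a deformation that pushes $M_i'$ into $\Sigma_{i-1}'$. You instead prove local contractibility of the pair $(M', M_0')$ directly --- contracting $T_{x'}M'$ by lifting the radial scaling of $T_xM$, which is a nice way to avoid the paper's dimensional induction on the sphere-bundle fibers --- and then try to deduce a weak equivalence from the local data and upgrade it via Whitehead. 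In principle this can work, and it is arguably more conceptual, but there are some concrete problems with the write-up.

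The most concrete error: $M'$ is \emph{not} locally compact in general, so the ANR criterion you quote fails. Whenever $\pi_1(T_xM - T_x\D)$ is infinite --- which is the typical case, e.g.\ any single codimension-$2$ branch component already gives $\Z$, and the universal cover then has infinitely many sheets --- closed metric balls about points of $\D'$ fail to be compact. The simplest example is $M=\C$, $\D=\{0\}$, $M_0'$ the universal cover of $\C^*$: $M'$ is the infinite metric cone over a line, and the closed $\delta$-ball about the apex contains an infinite $\delta$-separated set $\{(\delta/2, 2\pi n)\}_{n\in\Z}$, hence is not compact. The conclusion that $M'$ is an ANR is still reachable via Borsuk's theorem (a finite-dimensional, locally contractible metrizable space is an ANR; no local compactness needed), but as written the appeal to local compactness is wrong and should be replaced, and one should check separately that $M'$ has finite covering dimension.

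Two further points needing care. First, your cellular-approximation step asks that $U_\tau\supset U_\sigma$ whenever $\sigma$ is a face of $\tau$; that is the \emph{reverse} of what a naive Lebesgue-number argument gives (since smaller simplices have \emph{bigger} closed stars), so arranging the nesting requires choosing neighborhood sizes in a hierarchy by simplex dimension and this deserves an explicit construction, not just an ``arrange''. Second, the metric ball $B_r(x')\subset M'$ is not the full preimage of $B_r(x)$ --- the projection $M'\to M$ is only $1$-Lipschitz on the completion, not fiberwise isometric --- so your identification with a ball in $T_{x'}M'$ should be phrased in terms of the component of $\pi^{-1}(B_r(x))$ containing $x'$ (which is the formulation the paper uses in section~5). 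These are fixable, but the local compactness claim is genuinely false and is the piece you most need to repair.
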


\begin{proof}
This is a more sophisticated version of the argument for
\cite[lemma~3.3]{Allcock-asphericity}.  We assume inductively that the
lemma is known for manifolds of dimension smaller than $n:=\dim M$.
The base case $n=1$ is trivial since $\D$ is empty.  Write $\Sigma_i$
for $M_0\cup\dots\cup M_i$ and similarly for $\Sigma_i'$.  We claim
that each inclusion $\Sigma_{i-1}'\to\Sigma_i'$ is a homotopy
equivalence.  This proves the lemma because of the chain
$$
M'
=
\Sigma_n'
\homotopic
\Sigma_{n-1}'
\homotopic
\cdots
\homotopic
\Sigma_0'
=
M'_0.
$$

To prove the claim we must remove $M_i'$ from $\Sigma_i'$ without
changing homotopy type.  So let $x'\in M_i'$ lie over $x\in M_i$.  We
regard $T_x\D$ as a subset (a union of linear subspaces) of $T_xM$,
and by restricting to directions orthogonal to $T_xM_i$ we obtain the
normal bundle $N_\D M_i\sset N_MM_i$.  

For a continuous function
$r:M_i\to(0,\infty)$ we consider the closed-ball-bundle ``$\log B$''
whose fiber over $x\in M_i$ is the closed ball of radius $r(x)$ in
$N_MM_i$.  Standard Riemannian geometry shows that we may choose $r$
such that the exponential map sends $\log B$ diffeomorphically to its
image $B\sset M$ and identifies $(N_\D M_i)\cap(\log B)$ with $\D\cap
B$.  For $0<\alpha\leq1$ we write $\alpha S$ for the image in $M$ of the
sphere-bundle in $N_MM_i$ whose radius at $x$ is $\alpha r(x)$.  In
particular, $S:=1S$ is the boundary of $B$ in $\Sigma_i$.  

We write $B'$ and $\alpha S'$ for the preimages of $B$ and $\alpha S$ in
$\Sigma_i'$, which have obvious projection maps to $M'_{i}$.  Locally
this realizes $S'$ as a fiber bundle over $M'_i$.  (The fibers over
different components of $M_i'$ may be different.)  We claim that every
fiber is contractible.  To see this, let $S'_{x'}$ be a fiber and
$S_x$ its image in $M$.  By hypothesis
\eqref{hyp-local-pi1s-injective}, $S'_{x'}-\D'$ is a copy of the
universal cover of $S_x-\D$, so it is contractible by hypothesis
\eqref{hyp-aspherical-local-topology}.  Furthermore, both
hypotheses \eqref{hyp-aspherical-local-topology} and
\eqref{hyp-local-pi1s-injective} apply with $M$ and $\D$ replaced
by $S_x$ and $S_x\cap\D$, essentially because they are the
``restrictions'' of these hypotheses on $M$ to $\Sigma_{i-1}$.  By
induction on dimension, $S'_{x'}-\D'\to S'_{x'}$ is a
homotopy-equivalence, so $S'_{x'}$ is also contractible.

A standard paracompactness argument shows that we may contract all the
fibers simultaneously, that is, there is a fiber-preserving homotopy
$H$ of $S'$ which contracts each fiber to a point in that fiber.  By
coning off we regard $H$ also as self-homotopy of $B'$.  From this we
derive a new homotopy $J$ of $B'$.  The points of $\alpha S'$ move in $\alpha
S'$ along some initial segment of their tracks under $H$.  This initial
segment is the whole segment for $\alpha\in[0,1/2]$ and then decreases to
the trivial segment (i.e., the constant homotopy) at $\alpha=1$.  By
extending this to the constant homotopy on $\Sigma_i'-B'$ we regard
$J$ as a homotopy of $\Sigma_i'$.  

Note that $J$ collapses a neighborhood of $M_i'$ to a radial
interval-bundle over $M_i'$.  Retracting along these radial intervals
homotopes $M_i'$ into $\Sigma'_{i-1}$.  It is easy to see that this
gives a homotopy-inverse to the inclusion $\Sigma_{i-1}'\to\Sigma_i'$.
\end{proof}

\begin{theorem}
\label{thm-riem-mfld-br-cover-globally-CATx}
Suppose $M$ is connected, $\x\leq0$ and that for all $x\in M$,
\begin{enumerate}
%
%
\item
\label{hyp-AGAIN-T-x-M-T-x-Delta-is-aspherical}
$T_xM-T_x\D$ is aspherical, and
\item
\label{hyp-CAT0-tangent-spaces-in-branched-cover}
the universal branched cover of $T_xM$ over $T_x\D$
is \CAT0.
\end{enumerate}
Then the universal cover $M_0'$ of $M-\D$ is contractible and its
metric completion $M'$ is \CATx.
\end{theorem}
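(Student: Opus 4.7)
The plan is to prove $M'$ is \CATx\ via the Cartan-Hadamard theorem for \CATx\ spaces, and then transport contractibility to $M_0'$ through a homotopy equivalence $M_0'\simeq M'$. The bulk of the work is in establishing that $M'$ is locally \CATx.

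First, $M'$ is complete by construction and a geodesic space by theorem~\ref{thm-br-cover-is-geodesic-space}. To establish local \CATx, by theorem~\ref{thm-riem-mfld-br-cover-locally-CATx} it suffices to show that every tangent space $T_{x'}M'$ is \CAT0. By lemma~\ref{lem-tangent-spaces-are-branched-covers-of-tangent-spaces}, $T_{x'}M'$ is the branched cover of $T_xM$ over $T_x\D$ corresponding to the component of $p^{-1}(U\cap M_0)$ associated to $x'$, where $U$ is a small ball about $x=\pi(x')$ and $p\colon M_0'\to M_0$ is the covering projection. Hypothesis~(b) says the \emph{universal} branched cover is \CAT0, so what one needs is that these local covers are always universal; equivalently, that $\iota_*\colon\pi_1(U\cap M_0)\to\pi_1(M_0)$ is injective for all small $U$. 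I expect this to be the main obstacle. The natural strategy is a double induction on the codimension of the stratum containing $x$, paralleling the proof of theorem~\ref{thm-riem-mfld-br-cover-locally-CATx}: in the inductive step one combines the already-established \CAT0 structures on tangent spaces at lower-codimensional strata with asphericity~(a) (which controls the possible lifts of local spheres) to conclude that any loop in $U\cap M_0$ which becomes null-homotopic in $M_0$ was already null-homotopic in $U\cap M_0$.

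Once local \CATx\ is known, the rest is routine. Hypothesis~(a) and the local $\pi_1$-injectivity just proved verify the two hypotheses of lemma~\ref{lem-completion-is-homotopy-equivalence}, so the inclusion $M_0'\hookrightarrow M'$ is a homotopy equivalence, and $M'$ inherits the simple connectivity of $M_0'$. The Cartan-Hadamard theorem for \CATx\ spaces then promotes local \CATx\ to global \CATx, and since $\x\leq0$ the \CATx\ space $M'$ is contractible. Contractibility of $M_0'$ follows from the homotopy equivalence.
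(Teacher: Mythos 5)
Your overall skeleton (local \CATx\ via \CAT0 tangent spaces, then homotopy equivalence $M_0'\simeq M'$, then Cartan--Hadamard) matches the paper, and you correctly isolate the one nontrivial point: that the branched cover of $T_xM$ appearing as $T_{x'}M'$ must be the \emph{universal} one, i.e.\ that $\pi_1(U\cap M_0)\to\pi_1(M_0)$ is injective for small balls $U$ about $x$. But you only flag this as ``the main obstacle'' and gesture at a double induction on stratum codimension; you do not actually close the gap, and I don't believe the sketched induction can close it. The statement you need is genuinely global --- a loop in $U\cap M_0$ could a priori be contracted in $M_0$ using parts of $M$ far from $x$ --- whereas the proof of theorem~\ref{thm-riem-mfld-br-cover-locally-CATx} is a local induction, and the asphericity hypothesis~(a) controls $\pi_{n>1}$, not $\pi_1$. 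Combining ``\CAT0 tangent spaces at lower strata'' with asphericity does not give you a mechanism to pull a global nullhomotopy back into $U$.

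The paper dispatches the injectivity with a short global argument that sidesteps the induction entirely. Since the universal cover of $M_0$ is unchanged if one replaces $M$ by its Riemannian universal cover $\widetilde M$ and $\D$ by its preimage, one may assume $M$ is simply connected. Then $\exp_x\colon T_xM\to M$ is a diffeomorphism, and because the components of $\D$ are complete and totally geodesic, the ones through $x$ are exactly the images of the linear subspaces comprising $T_x\D$. Radial scaling $v\mapsto tv$ in $T_xM$ preserves $T_x\D$, so it deformation-retracts $T_xM-T_x\D$ onto $B_r(0)-T_x\D=B_r(x)-\D$ for small $r$. The inclusion $B_r(x)-\D\hookrightarrow M-\D$ factors through $T_xM-T_x\D$ with composite a homotopy equivalence, so $\pi_1(B_r(x)-\D)\to\pi_1(M-\D)$ is injective. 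That single observation feeds both lemma~\ref{lem-completion-is-homotopy-equivalence} (giving $M_0'\simeq M'$ and simple connectivity of $M'$) and lemma~\ref{lem-tangent-spaces-are-branched-covers-of-tangent-spaces} together with hypothesis~(b) (giving \CAT0 tangent spaces, hence local \CATx\ via theorem~\ref{thm-riem-mfld-br-cover-locally-CATx}). You should replace the inductive sketch with this reduction to the simply-connected case.
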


\begin{proof} Without
loss of generality we may replace $M$ by its universal cover and $\D$
by its preimage.  Then $M\iso T_xM$ by the exponential map and
it is easy to see that $\pi_1(T_xM-T_x\D)\to\pi_1(M-\D)$ is injective.
This verifies hypothesis \eqref{hyp-local-pi1s-injective} of lemma~\ref{lem-completion-is-homotopy-equivalence}, and the
hypothesis \eqref{hyp-aspherical-local-topology} of that theorem is the current hypothesis \eqref{hyp-AGAIN-T-x-M-T-x-Delta-is-aspherical}.
So  $M_0'\to M'$ is a homotopy equivalence.  Since
$M_0'$ is simply connected, so is $M'$.
Theorem~\ref{thm-CATx-extension} also applies, so $M'$ is locally
\CATx.  Then the Cartan-Hadamard theorem shows $M'$ is \CATx.  In
particular, it is contractible, and by the homotopy equivalence the
same is true of $M_0'$.
\end{proof}

\begin{remarks}
The only reason we assume connectedness is so the universal cover is
defined.  Also, this proof shows that the ``locally a universal
cover'' hypothesis of
lemma~\ref{lem-completion-is-homotopy-equivalence} is automatic if
$M$ is nonpositively curved.  It is also automatic if $M=S^n$ and $\D$
is a union of great $S^{n-2}$'s; one uses the same argument, together
with the fact that if a point of $S^n$ lies in $\D$ then so does its
antipode.
\end{remarks}

We close this section with a result that simplifies the use of
theorem~\ref{thm-riem-mfld-br-cover-globally-CATx}.  It immediately
implies the main result of \cite{Allcock-asphericity}, namely the case
of theorem~\ref{thm-riem-mfld-br-cover-globally-CATx} with $T_x\D$
locally modeled on the coordinate hyperplanes of $\C^n$ with its usual
metric.  We will use it in a less trivial way in the proof of
lemma~\ref{lem-exceptional-case-lemma}.

\begin{lemma}
\label{lem-can-break-arrangement-into-sub-arrangements}
Suppose $M$ is a complex manifold equipped with a Hermitian Riemannian
metric, and $\D_1,\dots,\D_k$ are locally finite arrangements of
totally geodesic complex hypersurfaces.  Suppose also that every
intersection of a component of $\D_i$ with a component of $\D_{j\neq
  i}$ is Hermitian-orthogonal.  Then $\D=\cup_{i=1}^k\D_i$ satisfies
hypotheses \eqref{hyp-AGAIN-T-x-M-T-x-Delta-is-aspherical} and
\eqref{hyp-CAT0-tangent-spaces-in-branched-cover} of
theorem~\ref{thm-riem-mfld-br-cover-globally-CATx} if each $\D_i$
does.
\end{lemma}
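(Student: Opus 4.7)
The plan is to work at a fixed point $x \in M$ and exhibit $T_x\D$ as a Hermitian-orthogonal product of the subarrangements $T_x\D_i$, from which both hypotheses will follow by elementary product considerations. Let $N_i \sset T_xM$ be the complex span of the (finitely many) normals to the components of $\D_i$ passing through $x$. The Hermitian-orthogonality assumption, applied at $x$ itself (which lies on the intersection of any such pair of components from different $\D_i$'s), gives that every normal from $\D_i$ is orthogonal to every normal from $\D_{j \neq i}$, so the subspaces $N_i$ are mutually Hermitian-orthogonal. Setting $W_0$ to be the orthogonal complement of $N_1 + \cdots + N_k$, we obtain an orthogonal decomposition $T_xM = W_0 \oplus N_1 \oplus \cdots \oplus N_k$.

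Each component $H$ of $T_x\D_i$ has its complex normal in $N_i$ and therefore splits as $H = W_0 \oplus N_1 \oplus \cdots \oplus (H \cap N_i) \oplus \cdots \oplus N_k$. Writing $\mathcal{A}_i \sset N_i$ for the induced codimension-one arrangement, a point $(w_0, v_1, \dots, v_k)$ lies in $T_x\D_i$ iff $v_i \in \mathcal{A}_i$, and hence
$$
T_xM - T_x\D \;=\; W_0 \,\times\, \prod_{i=1}^{k}(N_i - \mathcal{A}_i),
$$
since a point avoids $T_x\D$ iff it avoids every $T_x\D_i$. Because $W_0$ and each $N_j$ are simply connected, this product decomposition lifts to the universal cover as $W_0 \times \prod_i \widetilde{N_i - \mathcal{A}_i}$; since metric completion commutes with finite orthogonal products, the universal branched cover of $T_xM$ over $T_x\D$ is $W_0 \times \prod_i \widehat{N_i}$, where $\widehat{N_i}$ denotes the metric completion of $\widetilde{N_i - \mathcal{A}_i}$.

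It remains to feed in the hypotheses for the individual $\D_i$. Applied to $\D_i$, hypothesis \eqref{hyp-AGAIN-T-x-M-T-x-Delta-is-aspherical} says that $T_xM - T_x\D_i = \bigl(W_0 \oplus \bigoplus_{j \neq i} N_j\bigr) \times (N_i - \mathcal{A}_i)$ is aspherical; the first factor being contractible, this forces $N_i - \mathcal{A}_i$ to be aspherical, and then $T_xM - T_x\D$ is aspherical as a finite product of aspherical spaces. Similarly, hypothesis \eqref{hyp-CAT0-tangent-spaces-in-branched-cover} applied to $\D_i$ says that $\bigl(W_0 \oplus \bigoplus_{j \neq i} N_j\bigr) \times \widehat{N_i}$ is \CAT0, and restricting to a convex slice at a point of the Euclidean factor shows $\widehat{N_i}$ itself is \CAT0; then $W_0 \times \prod_i \widehat{N_i}$ is \CAT0 as a product of \CAT0 spaces, giving hypothesis \eqref{hyp-CAT0-tangent-spaces-in-branched-cover} for $\D$. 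The main point to watch is that the ``take universal cover, then complete'' construction respects the orthogonal product decomposition; this hinges on the simple-connectedness of the Euclidean factors at each stage, together with the elementary fact that metric completion commutes with finite products.
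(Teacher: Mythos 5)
Your proof is correct and takes essentially the same approach as the paper: use the Hermitian-orthogonality to split $T_xM$ into the orthogonal pieces on which each $T_x\D_i$ is supported, factor both $T_xM-T_x\D$ and the branched cover as products, and conclude by the fact that asphericity and \CAT0 pass to products. The paper phrases this a bit more compactly (reducing to $k=2$ by induction and stating the splitting as $(T_xM-T_x\D_1)\times(T_xM-T_x\D_2)\iso(T_xM-T_x\D)\times\C^{\dim M}$ rather than naming $W_0$ and the $N_i$), but the content is identical.
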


\begin{proof}
By induction it suffices to treat the case $k=2$; let $x\in M$.
Because of the orthogonality, we have
$$
(T_xM-T_x\D_1)\times(T_xM-T_x\D_2)\iso(T_xM-T_x\D)\times\C^{\dim M}.
$$
The left side is aspherical by hypothesis so $T_xM-T_x\D$ is too.
Similarly, the product of the universal branched covers of $T_xM$ over
$T_x\D_1$ and $T_x\D_2$ is isometric to the universal branched cover
over $T_xM-T_x\D$, again times a trivial factor $\C^{\dim M}$.
\end{proof}

\section{Coxeter Arrangements}
\label{sec-Coxeter-arrangements}

\noindent
In this section and the next we apply the machinery we have developed.
Although our results are conditional on the following conjecture about
finite Coxeter groups, we feel the unification of the problems we
address and their reduction to the conjecture is progress in itself.
Our main applications are the Arnol$'$d-Pham-Thom conjecture about
$K(\pi,1)$ spaces for Artin groups
(theorem~\ref{thm-Arnold-Pham-Thom-conjecture}), the asphericity
of the moduli spaces of amply lattice-polarized K3 surfaces
(theorem~\ref{thm-lattice-polarized-K3}), and the asphericity of
discriminant complements for the $3$ kinds of unimodal hypersurface
singularities (section~\ref{sec-singularity-theory}).  Our methods
allow a unified attack on all these problems.  At the end of this
section we make a few remarks on Bridgeland stability conditions for
K3s.

\begin{conjecture}
\label{conj-Coxeter-arrangement-implies-CAT0}
Let $W$ be a finite Coxeter group acting isometrically on
$\C^n$ and let $\D$ be the union of the hyperplanes fixed by the
reflections in $W$.  Then the metric completion of the universal cover
of $\C^n-\D$ is \CAT0.
\end{conjecture}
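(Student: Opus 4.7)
The plan is to induct on $n=\operatorname{rank}(W)$, reducing at the outset to the essential case where $W$ has no nonzero fixed vector, and to apply theorem~\ref{thm-riem-mfld-br-cover-globally-CATx} to $M=\C^n$ with its flat metric ($\chi=0$) and $\D$ the complexified mirror arrangement. Deligne's theorem \cite{Deligne} supplies the asphericity of $T_xM-T_x\D$: at each $x\in\C^n$ the local arrangement is the complexified mirror arrangement of the parabolic stabilizer $W_x\leq W$, which is again a finite Coxeter group and whose mirror complement is a $K(\pi,1)$.

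For the tangent-space CAT(0) hypothesis at $x$, decompose $T_xM$ Hermitian-orthogonally into the fixed subspace $F_x$ of $W_x$ and its complement, so that $T_x\D$ lies entirely in $F_x^\perp$. Lemma~\ref{lem-can-break-arrangement-into-sub-arrangements} splits off the $F_x$ factor, reducing the question to the universal branched cover of $\C^{\operatorname{rank} W_x}$ over the essential mirror arrangement of $W_x$. Whenever $W_x$ is a proper parabolic subgroup of $W$ this is precisely conjecture~\ref{conj-Coxeter-arrangement-implies-CAT0} in strictly smaller rank, hence known by the inductive hypothesis.

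The remaining and crucial case is $W_x=W$, which in the essential situation happens only at the origin. Lemma~\ref{lem-tangent-spaces-are-branched-covers-of-tangent-spaces} identifies the tangent space to the branched cover at a preimage of $0$ with the branched cover of $T_0\C^n=\C^n$ over $\D$ itself; i.e., with the very space we are trying to show is CAT(0). To break this circularity we exploit the $\R_{>0}$-scaling invariance of $(\C^n,\D)$: the universal branched cover $Y$ is a Euclidean cone whose link is the metric completion $\bar S$ of the universal cover of $S^{2n-1}-(\D\cap S^{2n-1})$, with its round metric of curvature $1$. By \cite[II.3.14]{bridson-haefliger}, $Y$ is \CAT0 if and only if $\bar S$ is \CAT1, converting the conjecture into a positive-curvature problem on a compact spherical object.

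The hard part will be the \CAT1 statement for $\bar S$. Theorem~\ref{thm-riem-mfld-br-cover-locally-CATx} applied with $\chi=1$ reduces local \CAT1 to tangent-space CAT(0); and the tangent spaces to $S^{2n-1}$ split as $\R\oplus\C^{n-1}$ with the arrangement supported in the $\C^{n-1}$ factor by a proper parabolic $W_x$ of rank at most $n-1$, so the inductive hypothesis yields local \CAT1 of $\bar S$. The genuine obstacle is passing from local to global \CAT1: the diameter hypothesis $R<\pi/2$ of theorem~\ref{thm-positive-curvature-case} is too tight to cover all of $\bar S$ at once, and no off-the-shelf positive-curvature Cartan-Hadamard theorem applies. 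A natural route is to use the $W$-chamber combinatorics on $\bar S$ to make a direct injectivity or simple-connectivity argument, thereby connecting with Charney and Davis's \CAT1 program for the Deligne complex \cite{CD-artin}; this is where the conjecture has its real content, and is the reason it is left open.
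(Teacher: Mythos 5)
This statement is a \emph{conjecture}; the paper does not prove it, and your proposal, to its credit, does not claim to either. Your last paragraph is the honest and correct conclusion: the content of the conjecture is exactly the global \CAT1\ property of the completed universal cover of $S^{2n-1}-(\D\cap S^{2n-1})$, and no tool in the paper closes that gap. So there is nothing in the paper to compare your argument against.

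That said, your reduction analysis is sound and lines up with the paper's own remarks on the conjecture. The reduction to the essential case, the use of Deligne's theorem to supply the local asphericity, and the induction on rank handling all proper parabolic stabilizers $W_x\subsetneq W$ are all correct. You are also right that at the origin the hypothesis of theorem~\ref{thm-riem-mfld-br-cover-globally-CATx} becomes circular (the \CAT0 tangent space there \emph{is} the branched cover itself), and that the correct way to sidestep this is the cone criterion \cite[II.3.14]{bridson-haefliger}: the universal branched cover is a Euclidean cone, and is \CAT0 if and only if its spherical link is \CAT1. The further reduction of the link's local \CAT1 property to tangent-space \CAT0 via theorem~\ref{thm-riem-mfld-br-cover-locally-CATx} (with $\chi=1$), using the splitting $T_s S^{2n-1}\iso\R\oplus\C^{n-1}$ where the arrangement lives in the second factor as the mirror arrangement of a strictly smaller Coxeter group, is also correct. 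The genuine obstacle you name---global \CAT1 of the link, where no off-the-shelf positive-curvature Cartan--Hadamard theorem applies because the diameter bound of theorem~\ref{thm-positive-curvature-case} is not satisfied---is precisely where the paper's own remark~(3) after the conjecture points, suggesting the $n=2$ case should be accessible via Charney--Davis's treatment of branched covers of $S^3$, and remark~(4) identifies it as essentially Charney--Davis's Conjecture~3. In short, your proposal is a correct set of reductions that isolates the right open question, not a proof, and it matches the paper's own view of where the difficulty lies.
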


\begin{remarks}
(1) Most of the applications require only the ADE cases.

(2) The case of a finite complex reflection group is also interesting,
  with some applications we omit here.

(3) The case $n=2$ should be accessible using tools developed by
  Charney and Davis \cite[theorem~9.1]{CD}.  They address the slightly
  different problem of finite-sheeted branched covers of $S^3$ over a
  union of $3$ disjoint great circles.  Our situation is simpler than
  theirs in three ways: our circles lie in a single Hopf fibration, we
  use the universal cover rather than a finite-sheeted cover, and in
  most cases we have more than $3$ great circles.  But I have not
  worked out any details.

(4) This conjecture is very close to conjecture~3 of Charney and Davis
  \cite{CD-artin}.  In particular, we will see in the proof of
  theorem~\ref{thm-Arnold-Pham-Thom-conjecture} that ours implies
  theirs.  The converse may also hold, but I
  have not seriously looked at this.
\end{remarks}

\begin{corollary}
\label{cor-assuming-conjecture-locally-Coxeter-implies-cover-contractible}
Assume conjecture~\ref{conj-Coxeter-arrangement-implies-CAT0}.
Suppose $M$ is a complete connected Riemannian manifold of nonpositive sectional
curvature, and $\D$ is a union of totally geodesic submanifolds, whose
tangent space at any point is isomorphic to the mirror arrangement of
some complexified finite Coxeter group.  Then the universal cover of
$M-\D$ is contractible.
\end{corollary}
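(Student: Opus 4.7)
The plan is to apply Theorem~\ref{thm-riem-mfld-br-cover-globally-CATx} directly, with $\x=0$. The entire content of the proof is verifying its hypotheses for the given $(M,\D)$.

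The structural hypotheses (those inherited from Theorem~\ref{thm-riem-mfld-br-cover-locally-CATx}) are immediate. $M$ is a complete Riemannian manifold with sectional curvature${}\leq 0$ and is connected by assumption. At each point $x$, the arrangement $T_x\D\subset T_xM$ is isomorphic to a mirror arrangement of some complexified finite Coxeter group $W_x$, and hence consists of finitely many complex hyperplanes of real codimension~$2$. Since $\D$ is totally geodesic and its tangent model at each point is finite, a sufficiently small neighborhood of any point meets only the local strata through that point, so $\D$ is locally a finite union of complete totally geodesic submanifolds of codimension~$2$, as required.

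The pointwise hypotheses of Theorem~\ref{thm-riem-mfld-br-cover-globally-CATx} are precisely where Deligne's theorem and the conjecture enter. For every $x\in M$, the complement $T_xM-T_x\D$ is aspherical by Deligne's theorem \cite{Deligne} on complements of mirror arrangements of finite Coxeter groups, since by assumption $T_x\D$ is exactly such an arrangement. Similarly, the universal branched cover of $T_xM$ over $T_x\D$ is \CAT0 by Conjecture~\ref{conj-Coxeter-arrangement-implies-CAT0}, applied to $W_x$ on $T_xM$ equipped with its Euclidean metric and the compatible complex structure furnished by the ``isomorphic to'' assumption. (If $T_x\D$ does not span $T_xM$, split off the trivial Euclidean factor; it affects neither asphericity nor the \CAT0 property.)

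With all hypotheses verified, Theorem~\ref{thm-riem-mfld-br-cover-globally-CATx} yields both that the universal cover $M_0'$ of $M-\D$ is contractible and that its metric completion $M'$ is \CAT0; only the former is asserted in the corollary. There is no substantive obstacle once the conjecture is granted: the corollary is a direct invocation of the machinery of Sections~\ref{sec-riem-mflds-local}--\ref{sec-Riemannian-global}, with the conjecture handling the infinitesimal \CAT0 condition uniformly at every stratum.
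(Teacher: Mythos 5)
Your proof is correct and follows essentially the same route as the paper: apply Theorem~\ref{thm-riem-mfld-br-cover-globally-CATx}, using Deligne's theorem for the asphericity of $T_xM-T_x\D$ and Conjecture~\ref{conj-Coxeter-arrangement-implies-CAT0} for the \CAT0 property of the universal branched cover of $T_xM$. The extra details you give about verifying the structural hypotheses and splitting off a trivial Euclidean factor are sound and are implicitly assumed in the paper's brief argument.
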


\begin{proof}
This is an application of
theorem~\ref{thm-riem-mfld-br-cover-globally-CATx}.  Verifying the
asphericity of $T_xM-T_x\D$ for every $x\in\D$ amounts to the
asphericity of $\C^n$ minus the mirrors of a finite Coxeter group
group.  This is a result of Deligne \cite{Deligne}.  
The \CAT0  hypothesis on the universal branched
cover of $T_xM-T_x\D$ is exactly the conjecture.
(For finite
complex reflection groups one could replace Deligne's theorem with one
of Bessis \cite{Bessis}.)
\end{proof}

\begin{theorem}
\label{thm-Arnold-Pham-Thom-conjecture}
Assume conjecture~\ref{conj-Coxeter-arrangement-implies-CAT0}.  Let
$W$ be any Coxeter group, acting on its open Tits cone $C\sset\R^n$,
and let $M$ be its tangent bundle $TC$.  Let $\D$ be the union of the
tangent bundles to the mirrors of the reflections of $W$.  Then $M-\D$
has contractible universal cover.
\end{theorem}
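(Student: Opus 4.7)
The plan is to invoke the machinery of corollary~\ref{cor-assuming-conjecture-locally-Coxeter-implies-cover-contractible} with $M = TC$, modulo the caveat mentioned in the introduction that the Tits cone carries no canonical complete Riemannian metric. Under the identification $T\R^n = \C^n$, where $W$ acts isometrically by the $\R$-linear extension of its action on $\R^n$, I would view $M = TC$ as an open $W$-invariant subset of $\C^n$ and equip it with the restriction of the flat K\"ahler metric. Each mirror tangent bundle $TH$ is then a complex hyperplane, hence totally geodesic, and distinct such hyperplanes meet Hermitian-orthogonally wherever they meet, since the Coxeter arrangement is simplicial.

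First I would pin down the local structure of $\D$. At any $y = (x,v) \in M$, the stabilizer $W_x \le W$ is a finite parabolic reflection subgroup, and the mirrors of $W$ passing through a neighborhood of $x$ are exactly the mirrors of $W_x$; this is a standard fact about Tits cones. Passing to the tangent bundle, $T_y\D \subset T_yM = \C^n$ is the complexified mirror arrangement of $W_x$. Consequently hypothesis \eqref{hyp-AGAIN-T-x-M-T-x-Delta-is-aspherical} of theorem~\ref{thm-riem-mfld-br-cover-globally-CATx} follows from Deligne's asphericity theorem for complexified finite Coxeter mirror complements, and \eqref{hyp-CAT0-tangent-spaces-in-branched-cover} is precisely an instance of conjecture~\ref{conj-Coxeter-arrangement-implies-CAT0}.

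Next I would work through the proof of theorem~\ref{thm-riem-mfld-br-cover-globally-CATx} and verify that each of its three ingredients still applies. The \emph{local} steps---theorem~\ref{thm-riem-mfld-br-cover-locally-CATx}, giving $M'$ locally \CATx, and lemma~\ref{lem-completion-is-homotopy-equivalence}, giving the homotopy equivalence $M_0' \homotopic M'$---depend only on the existence of small closed convex flat balls around each $y \in M$, which costs nothing since $C$ is open in $\R^n$. So $M'$ is locally \CATx\ and simply connected (the latter because $M_0'$ is by assumption the full universal cover of $M - \D$).

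The main obstacle is the global step: theorem~\ref{thm-br-cover-is-geodesic-space} and the Cartan-Hadamard theorem both appeal to completeness of $M$, which fails here. I would handle this via a $W$-invariant exhaustion $C_1 \subset C_2 \subset \cdots$ of $C$ by relatively compact open convex subsets with $\overline{C_k} \subset C$. Each $TC_k$ has a uniform positive distance from $\partial C$, so paths of bounded length starting in a fixed compact set of $TC_k$ stay inside a compact part of $TC$; this gives the local compactness needed to rerun the Arzel\`a-Ascoli argument of theorem~\ref{thm-br-cover-is-geodesic-space} and to apply Cartan-Hadamard to conclude that the universal branched cover of $TC_k$ over $\D \cap TC_k$ is \CATx\ and contractible. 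By the local \CATx\ structure, each inclusion $TC_k - \D \hookrightarrow TC_{k+1} - \D$ is $\pi_1$-injective, so the universal covers of the $TC_k - \D$ embed compatibly and their union is the universal cover of $TC - \D$. As an ascending union of contractible open subsets, this universal cover is itself contractible, which gives the theorem.
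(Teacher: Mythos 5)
Your proposal takes a genuinely different route from the paper, but it has a fatal flaw right at the start. You assert that under $T\R^n = \C^n$, ``$W$ acts isometrically by the $\R$-linear extension of its action on $\R^n$.'' This is false for infinite $W$: the Tits representation preserves the Coxeter bilinear form, which is \emph{not} positive definite when $W$ is infinite, so $W$ does not act by Euclidean isometries on $\R^n$ and hence not by isometries of the flat K\"ahler metric on $\C^n$. Consequently the mirror arrangement $T_y\D \subset T_yM$ at a point $y$ with nontrivial finite parabolic $W_x$ need not be isometric to a \emph{Euclidean} complexified Coxeter arrangement; the finite group $W_x$ preserves a different inner product, so $T_y\D$ is a linearly deformed copy of the standard arrangement, and conjecture~\ref{conj-Coxeter-arrangement-implies-CAT0} (which concerns a finite Coxeter group acting \emph{isometrically} on $\C^n$) does not apply to its branched cover directly. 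Being \CAT0\ is not invariant under $\C$-linear deformations of a flat metric, so this cannot be waved away. This is exactly why the introduction flags that ``the Tits cone doesn't come with a natural metric.''

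There is also a second, subtler gap in your exhaustion argument. The paper's branched-cover framework (theorems~\ref{thm-riem-mfld-br-cover-locally-CATx}, \ref{thm-br-cover-is-geodesic-space}, \ref{thm-riem-mfld-br-cover-globally-CATx}) is built on the hypothesis that $M$ is a \emph{complete} Riemannian manifold, and each $TC_k$ is still incomplete. The metric completion of the universal cover of $TC_k - \D$ adds not only the branch locus but also all the metric boundary coming from $\partial(TC_k)$; at those added points the local-\CATx\ analysis via tangent branched covers has no content, so the Cartan--Hadamard step does not go through as stated. Your remark about paths of bounded length staying in a compact part of $TC$ shows sequences don't escape $TC$, but it does not keep the completion from acquiring non-branch boundary points of $TC_k$. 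The paper sidesteps both problems entirely: following Charney--Davis \cite{CD-artin}, it reduces the $K(\pi,1)$ statement for an \emph{arbitrary} Coxeter group to the purely finite-type claim that the Deligne complex is \CAT1, then realizes that complex as the link of a point inside the universal branched cover of $\C^n$ over a \emph{finite} Coxeter arrangement (where $W$ really is orthogonal and the flat metric really is complete), and exhibits a distance-nonincreasing retraction onto the preimage of $\R^n$ to deduce convexity. That detour through the Deligne complex is precisely what makes the argument work without a $W$-invariant nonpositively curved metric on the Tits cone.
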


The theorem applies to many cones besides the Tits cone, but to state
the result in its natural generality one must discuss discrete linear
reflection groups \'a la Vinberg \cite{Vinberg}.  We refer to Charney
and Davis \cite{CD-artin} for the more general formulation; we also
assume familiarity with this paper in the following proof.

\begin{proof}
Charney and Davis \cite[p.~601]{CD-artin} show that the theorem
follows from the claim: the Deligne complex of a finite-type Artin
group is \CAT1.  To explain and prove this statement, suppose $W$ is a
finite Coxeter group, acting in the usual way on $\R^n$, and set
$M:=\C^n$ and $\D$ to be the union of the (complex) mirrors of $W$.
We write $M_0$ for $M-\D$, $M_0'$ for its universal cover, and $M'$
for the metric completion of $M_0'$.  The Artin group associated to
$W$ is defined as $\pi_1(M_0/W)$, and is called finite type to reflect
the fact that $|W|<\infty$.

Formally, its Deligne complex is a metrized simplicial complex defined
in terms of inclusions of cosets of Artin subgroups corresponding to
subdiagrams of the Coxeter diagram.  More convenient for our purposes
is the following: it is the preimage of $S^{n-1}\sset\R^n$ in $M'$,
with the induced path-metric.  To see that this is the same complex
one need only check that face stabilizers are the same as in the Deligne
complex (which is obvious since the Artin subgroups correspond to the
strata of the branch
locus).  The metric in both cases makes each simplex into a copy of the
usual fundamental domain for $W$ in $S^{n-1}$.

The Euclidean cone on the Deligne complex is clearly the preimage
$Y\sset M'$ of $\R^n\sset M$.  By
\cite[thm.~II.3.14]{bridson-haefliger}, a space is \CAT1 if and only
if the cone on it is \CAT0, so it suffices to show that $Y$ is \CAT0.

To prove this we define a distance-nonincreasing retraction $M'\to Y$.
Assuming conjecture~\ref{conj-Coxeter-arrangement-implies-CAT0}, $M'$
is \CAT0, and it then follows that $Y$ is convex in $M'$, hence \CAT0,
proving the theorem.  To define the retraction, consider the homotopy
from $\C^n$ to $\R^n$ given by shrinking the imaginary parts of
vectors toward $0$.  This defines an stratum-preserving ``open
homotopy'' from $M$ into itself, i.e., a continuous map $[0,1)\times
  M\to M$.  Using covering spaces, we can lift this to an open
  homotopy $[0,1)\times M'\to M'$ with $\{0\}\times M'\to M'$ the
    identity map.  (Properly speaking, one lifts the homotopy on each
    stratum separately and checks that they fit together to give a
    homotopy of $M'$.  This is easiest to see by thinking of $M_0$ as
    the set of tangent vectors to $\R^n$ that are not tangent to any
    mirror.  Our homotopy shrinks all vectors without moving their
    basepoints.)

Using metric completeness allows us to to extend this to a homotopy
$[0,1]\times M'\to M'$.  The result is a deformation retraction from
$M'$ to $Y$.  It is distance-nonincreasing because the original
homotopy is.
\end{proof}

The next theorem uses the global Torelli theorem for K3 surfaces,
together with our corollary~\ref{cor-assuming-conjecture-locally-Coxeter-implies-cover-contractible}, to show that
various moduli spaces of K3 surfaces have contractible orbifold
universal covers.  We need the following background on
lattice-polarized K3 surfaces.  These were introduced by Nikulin
\cite{Nikulin-lattice-polarized-K3s} to generalize the classical case of K3 surfaces equipped with
a single ample or semi-ample line bundle.  They were developed further
by Dolgachev \cite{Dolgachev-mirror-symmetry-for-lattice-polarized-K3s}, to which we refer the reader for details.  

Sometimes singular surfaces are called K3s if their minimal
resolutions are K3s, but we include smoothness in the definition of a
K3 surface.  If $X$ is one, then the isometry type of the intersection
pairing on $H^2(X;\Z)$ is independent of $X$ and isomorphic to the
``K3 lattice''
$K:=E_8^2\oplus\bigl(\begin{smallmatrix}0&1\\1&0\end{smallmatrix}\bigr)^3$.
  The Picard group $\Pic X$ is the sublattice spanned by (the
  Poincar\'e duals of) algebraic cycles.  Now suppose $M$ is a lattice
  (=integer bilinear form) of signature $(1,t)$ equipped with a choice
  of Weyl chamber for the subgroup of $\Aut M$ generated by
  reflections in the norm $-2$ vectors of $M$.  An $M$-polarization of
  $X$ means a primitive embedding $j:M\to\Pic X$, and $j$ is called
  ample if the $j$-image of this chamber contains an ample class.  The
  $M$-polarized K3s fall into families parameterized by the isometry
  classes of primitive embeddings $M\to K$, so fix one such embedding.

As concrete examples, when $M$ is spanned by a vector of norm $-4$
(resp.\ $-2$), there is only one embedding $M\to K$ up to isometry.
Then the moduli space of amply $M$-polarized K3s is the same as the moduli
space of smooth quartic surfaces in $\cp^3$ (resp.\ that of the double
covers of $\cp^2$ over smooth sextic curves).

\begin{theorem}
\label{thm-lattice-polarized-K3}
Assume conjecture~\ref{conj-Coxeter-arrangement-implies-CAT0}.
Suppose $M$ is an integer quadratic form of signature $(1,t)$ with a
fixed embedding in $K$.  Then the moduli space of amply $M$-polarized
K3 surfaces $(X,j)$, for which the composition $M\to\Pic X\to H^2(X)$ is
isomorphic to $M\to K$, has contractible (orbifold) universal cover.
\end{theorem}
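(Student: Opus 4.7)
The plan is to identify the moduli space via global Torelli with an orbifold quotient of a hyperplane complement in a Hermitian symmetric domain, verify that the hyperplane arrangement is locally modeled on complexified finite Coxeter mirrors, and then apply Corollary~\ref{cor-assuming-conjecture-locally-Coxeter-implies-cover-contractible-INTRO}.

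First I would set up the period map. Let $T$ denote the orthogonal complement of $M$ in $K$, which has signature $(2,19-t)$. Define the period domain
\[
\Omega=\bigl\{\,[\omega]\in P(T\tensor\C)\bigm|\omega\cdot\omega=0,\ \omega\cdot\bar\omega>0\,\bigr\},
\]
let $\Omega^+$ be a connected component, and let $\Gamma$ be the subgroup of $\Aut T$ that stabilizes the data of the embedding $M\hookrightarrow K$, the chosen chamber in $M$, and the component $\Omega^+$. Let $H\sset\Omega^+$ be the union of hyperplanes $\delta^\perp\cap\Omega^+$ as $\delta$ ranges over the norm $-2$ vectors of $T$. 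Global Torelli for amply lattice-polarized K3 surfaces (Dolgachev, following Piatetski-Shapiro--Shafarevich) identifies the moduli space of amply $M$-polarized K3s in the given embedding class with the analytic orbifold $\Gamma\backslash(\Omega^+-H)$; the deletion of $H$ reflects the ampleness condition, since mapping across $\delta^\perp$ corresponds to applying the reflection in a class of square $-2$ that is orthogonal to $M$.

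Next I would put the relevant geometry on $\Omega^+$. The quotient $P\Omega^+$ is the Hermitian symmetric domain for $\O(2,19-t)$, and it carries an $\O(2,19-t)$-invariant K\"ahler metric of nonpositive sectional curvature; this makes it a complete connected Riemannian manifold satisfying the hypothesis on $M$ in Corollary~\ref{cor-assuming-conjecture-locally-Coxeter-implies-cover-contractible-INTRO}. Each hyperplane $\delta^\perp\cap\Omega^+$ is a totally geodesic complex submanifold of complex codimension~$1$ (it is the fixed set of the holomorphic isometric involution given by the reflection in $\delta$), and the arrangement $H$ is locally finite: near any point $[\omega]\in\Omega^+$ the positive $2$-plane spanned by $\mathrm{Re}\,\omega,\mathrm{Im}\,\omega$ lies in the positive cone, so only finitely many $-2$ vectors lie in a fixed neighborhood of $\omega^\perp$ (standard reduction-theory argument).

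The key local computation is that the tangent arrangement at each point is a complexified finite Coxeter arrangement. At $[\omega]\in\Omega^+$, the $-2$ vectors whose hyperplanes pass through $[\omega]$ lie in the orthogonal complement of the positive $2$-plane $\langle\mathrm{Re}\,\omega,\mathrm{Im}\,\omega\rangle$; this complement is negative definite, so those $-2$ vectors form a finite root system, and the reflections they generate form a finite Coxeter group $W$ acting on the (real) tangent space. Because the hyperplanes are complex submanifolds and the metric on $\Omega^+$ is Hermitian, the tangent arrangement is exactly the complexified mirror arrangement of $W$ in the sense of Corollary~\ref{cor-assuming-conjecture-locally-Coxeter-implies-cover-contractible-INTRO}. (Where several root systems glue together into a bigger local arrangement one can invoke lemma~\ref{lem-can-break-arrangement-into-sub-arrangements} or just observe that the full local stabilizer is itself a finite Coxeter group.)

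Finally I would apply Corollary~\ref{cor-assuming-conjecture-locally-Coxeter-implies-cover-contractible-INTRO} with $M:=\Omega^+$ and $\D:=H$ to conclude that the universal cover of $\Omega^+-H$ is contractible. Since $\Gamma$ acts properly discontinuously on $\Omega^+-H$, the orbifold $\Gamma\backslash(\Omega^+-H)$ has orbifold universal cover equal to this topological universal cover, so its orbifold universal cover is contractible, which is what the theorem asserts. The main obstacle is the verification that the root-hyperplane arrangement is honestly locally Coxeter and honestly locally finite near the boundary behavior one meets in the period domain; both are standard for lattices of signature $(2,n)$ but need to be stated carefully so that the hypotheses of Corollary~\ref{cor-assuming-conjecture-locally-Coxeter-implies-cover-contractible-INTRO} (in particular local-modeling in the sense of lemma~\ref{lem-tangent-spaces-are-branched-covers-of-tangent-spaces}) genuinely apply.
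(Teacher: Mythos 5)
Your proposal is correct and follows essentially the same route the paper takes: invoke Dolgachev's global Torelli to realize the moduli space as (a quotient of) a hyperplane complement in $P\Omega(M^\perp)$, observe that the arrangement of $(-2)$-hyperplanes is locally modeled on mirror arrangements of finite (ADE) Coxeter groups because the relevant roots at any point span a negative-definite sublattice, and apply Corollary~\ref{cor-assuming-conjecture-locally-Coxeter-implies-cover-contractible}. You flesh out several points the paper leaves implicit (local finiteness, the identification of the tangent arrangement with the complexified real mirror arrangement), though your parenthetical about invoking Lemma~\ref{lem-can-break-arrangement-into-sub-arrangements} is unnecessary here since the local stabilizer is already a single finite Coxeter group rather than a product.
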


The proof involves a symmetric space that will also play an important
role in the following section.  If $L$ is a lattice of signature
$(2,n)$ then 
\begin{equation}
\label{eq-def-of-P-Omega}
\Omega(L):= \hbox{a component of } \bigl\{x\in
L\tensor\C\bigm|x\cdot x=0\hbox{ and }x\cdot\bar{x}>0\bigr\}.
\end{equation}
$P\Omega(L)$ is the symmetric space for $\O(L\tensor\R)\iso\O(2,n)$.  (One can
check that $P\Omega(L)$ is the same as the set of positive-definite $2$-planes
in $L\tensor\R$.)  As a symmetric space of noncompact type, its
natural Riemannian metric is complete and has nonpositive sectional curvature.

\begin{proof}
The global Torelli theorem for lattice-polarized K3 surfaces
\cite[theorem~3.1]{Dolgachev-mirror-symmetry-for-lattice-polarized-K3s} says that
the moduli space is covered (as an orbifold) by a certain hyperplane
complement in the symmetric space $P\Omega(M^\perp)$  where $M^\perp$
refers to the complement in $K$.  Namely, it is
$P\Omega(M^\perp)-\D$ where $\D$ is the union of the orthogonal
complements of the norm~$-2$ vectors in $M^\bot\sset K$.  Because the
orthogonal complement of a norm $-2$ vector is the mirror for the
reflection in that vector, and this reflection preserves $M^\perp$, we
see that $\D$ is locally modeled on the hyperplane arrangements for
ADE Coxeter groups.  So we can apply
corollary~\ref{cor-assuming-conjecture-locally-Coxeter-implies-cover-contractible}
(which assumes conjecture~\ref{conj-Coxeter-arrangement-implies-CAT0}).
\end{proof}

I am grateful to D.~Huybrechts for pointing me toward a similar
situation in Bridgeland's work on stability conditions on K3 surfaces.
Bridgeland defined the notion of a stability condition on a
triangulated category, and described one component $\Stab^\dag(X)$ of
the space of locally finite numerical stability conditions on the
derived category of an algebraic K3 surface $X$
\cite[thm.~1.1]{Bridgeland}.  It turns out to be a covering space of
$\Omega\bigl(\Pic
X\oplus\bigl(\begin{smallmatrix}0&1\\1&0\end{smallmatrix}\bigr)\bigr)-\H$
  where $\H$ is the union of the orthogonal complements of the norm
  $-2$ vectors of
  $\Pic{X}\oplus\bigl(\begin{smallmatrix}0&1\\1&0\end{smallmatrix}\bigr)$.
    This situation is exactly like that of theorem~\ref{thm-lattice-polarized-K3}.  In
    particular, our conjecture~\ref{conj-Coxeter-arrangement-implies-CAT0} implies that $\Stab^\dag(X)$ is
    aspherical.  Furthermore, Bridgeland conjectured that this
    covering is a universal covering \cite[conj.~1.2]{Bridgeland}.  If
    both this and our conjecture~\ref{conj-Coxeter-arrangement-implies-CAT0} hold then $\Stab^\dag(X)$ is
    contractible.

\section{Singularity Theory}
\label{sec-singularity-theory}

\noindent
For us, a singularity means the germ $(X_0,x_0)$ of a complex space at
a singular point, and conceptually its semiuniversal deformation (SUD)
is the space of all its smoothings and partial smoothings.  The
discriminant complement means the set of those that are actually
smooth.  Assuming
conjecture~\ref{conj-Coxeter-arrangement-implies-CAT0}. we will prove
that this is aspherical for all the unimodal hypersurface singularities
in Arnol$'$d's hierarchy of surface singularities.

The precise definitions are as follows.  A deformation of $(X_0,x_0)$
means a flat holomorphic map $F:(\frakX,x_0)\to(S,s_0)$ of germs of
complex spaces, together with an isomorphism of $(X_0,x_0)$ with the
fiber over $s_0$.  $F$ is versal if any deformation
$F':(\frakX',x_0)\to(S',s_0')$ can be pulled back from it, i.e., if
there exist holomorphic maps $\phi:S'\to S$ and
$\Phi:\frakX'\to\frakX$ where $F\circ\Phi=F'\circ\phi$ and $\Phi$
respects the identification of the fibers over $s_0$ and $s_0'$ with
$X_0$.  $F$ is called semi-universal if in these circumstances the
derivative of $\phi$ at $s_0$ is always uniquely determined.  Grauert
\cite{Grauert} proved that an isolated singularity always admits a SUD
and that it is unique up to non-unique isomorphism.  So we fix a SUD
$F:(\frakX,x_0)\to(S,s_0)$ and refer to it as ``the'' SUD.  We will
usually speak of spaces rather than germs.

The discriminant $\D$ means the subspace of $S$ over which the fibers
of $\frakX$ are singular.  Studying the inclusion $\D\to S$ and the
topology of $S-\D$ has been at the forefront of singularity theory for
decades, beginning with Brieskorn's famous result
\cite{Brieskorn-ADE}.  He proved that for the $A_n$, $D_n$ and $E_n$
singularities, $\D\to S$ is the inclusion of the mirrors of the
corresponding Weyl group $W$ into $\C^n$, modulo the action of $W$.
In particular, $S-\D$ has $\C^n-(\hbox{the mirrors of $W$})$ as an
unramified covering space.  This connection to singularity theory
helped move Deligne to prove his theorem \cite{Deligne} on the
asphericity of  hyperplane complements like this: it implies that $S-\D$ is
aspherical.

Our goal is to extend Deligne's result to the next level of complexity
in Arnol$'$d's hierarchy of isolated hypersurface singularities: the
``unimodal'' singularities.  Unfortunately, our results are
conditional on
conjecture~\ref{conj-Coxeter-arrangement-implies-CAT0}.  But we do
succeed in unifying the different problems and reducing them to a
question about finite Coxeter groups.  Our arguments also apply to
some other singularities, but we have restricted to the unimodal
hypersurface case to avoid complicated statements.  There are three
flavors of these singularities, all of which occur already for
surfaces in $\C^3$.  It is standard that we lose nothing by
restricting to this dimension \cite[p.~184]{Arnold}.

First come the ``simply elliptic''
singularities $\Etilde_6$, $\Etilde_7$ and $\Etilde_8$, using language
due to K.~Saito \cite{Saito-simply-elliptic}.  These are
\begin{eqnarray}
\nonumber
\Etilde_6&\qquad y(y-x)(y-\lambda x) + xz^2\\
\nonumber
\Etilde_7&\qquad yx(y-x)(y-\lambda x) + z^2\\
\nonumber
\Etilde_8&\qquad y(y-x^2)(y-\lambda x^2) + z^2\\
\noalign{%
\noindent
which are quasihomogeneous of degrees $3$, $4$ and $6$ with respect to
the weights $(1,1,1)$, $(1,1,2)$ and $(1,2,3)$.  Here $\lambda$ must
be such that the singularity is isolated.  For almost all such values
of $\lambda$ one can change coordinates to obtain the more
memorable forms
}
\nonumber
\Etilde_6&\qquad x^3+y^3+z^3+\lambda'xyz\\
\nonumber
\Etilde_7&\qquad x^4+y^4+z^2+\lambda'xyz\\
\nonumber
\Etilde_8&\qquad x^6+y^3+z^2+\lambda'xyz\\
\noalign{%
\smallskip
\noindent
Then come the ``cusp singularities''
}
\label{eq-cusp-Tpqr}
T_{p,q,r}&\qquad x^p+y^q+z^r+xyz
\end{eqnarray}
for $\frac{1}{p}+\frac{1}{q}+\frac{1}{r}<1$.  Finally there are the
``exceptional'' singularities in table~\ref{tab-exceptional-singularities}.  These represent 28
singularities because the cases $\lambda=0$, $\lambda\neq0$ are
essentially different, while rescaling variables allows one to replace
any $\lambda\neq0$ by $\lambda=1$.  When $\lambda=0$, the
singularities are quasihomogeneous with the listed weights and
degrees.  (Note: ``unimodal'' refers to the existence of 1-parameter
families of isomorphism classes of fibers in the SUD, rather than
the number of moduli occurring in the description of the singularity
itself.)

Now let $f(x,y,z)$ be one of the functions above, and
$(X,0)\sset(\C^3,0)$ the singularity defined by $f=0$.  We refer to
\cite{Kas-Schlessinger} for the following standard model of the SUD.
Let $\calO$ be the ring of germs of convergent power series on $\C^3$
at $0$ and $\calI$ be the ideal generated by $f$ and its first partial
derivatives.  (When $f$ is quasihomogeneous, it lies in the ideal
generated by its partial derivatives, so $\calI$ is the Jacobian
ideal.)  Suppose $p_1,\dots,p_\tau\in\calO$ project to a $\C$-basis
for $\calO/\calI$.  Then we define $(S,s_0)$ as $(\C^\tau,0)$,
$(\frakX,0)$ as the subspace of $(\C^{\tau+3},0)$ defined by
\begin{equation}
\label{eq-versal-deformation}
f(x,y,z)+\sum_{i=1}^\tau t_i\, p_i(x,y,z)=0
\end{equation}
and $F$ as the projection that forgets $x,y,z$.  

\begin{table}
\begin{tabular}{ccccc}
&&&&Dolgachev\\
Type&$f$&weights&degree&numbers\\
\noalign{\smallskip}
\hline
\noalign{\smallskip}
$Q_{10}$
&$x^2z+y^3+z^4+\lambda yz^3$
&$9,8,6$
&$24$
&2,3,9
\\
$Q_{11}$
&$x^2z+y^3+yz^3+\lambda z^5$
&$7,6,4$
&$18$
&2,4,7
\\
$Q_{12}$
&$x^2z+y^3+z^5+\lambda yz^4$
&$6,5,3$
&$15$
&3,3,6
\\
$S_{11}$
&$x^2z+yz^2+y^4+\lambda z^3$
&$5,4,6$
&$16$
&2,5,6
\\
$S_{12}$
&$x^2z+yz^2+xy^3+\lambda y^5$
&$4,3,5$
&$13$
&3,4,5
\\
$U_{12}$
&$x^3+y^3+z^4+\lambda xyz^2$
&$4,4,3$
&$12$
&4,4,4
\\
$Z_{11}$
&$x^3y+y^5+z^2+\lambda xy^4$
&$8,6,15$
&$30$
&2,3,8
\\
$Z_{12}$
&$x^3y+xy^4+z^2+\lambda x^4$
&$6,4,11$
&$22$
&2,4,6
\\
$Z_{13}$
&$x^3y+y^6+z^2+\lambda x^4$
&$5,3,9$
&$18$
&3,3,5
\\
$W_{12}$
&$x^4+y^5+z^2+\lambda x^2y^3$
&$5,4,10$
&$20$
&2,5,5
\\
$W_{13}$
&$x^4+xy^4+z^2+\lambda x^3y^2$
&$4,3,8$
&$16$
&3,4,4
\\
$E_{12}$
&$x^3+y^7+z^2+\lambda xy^5$
&$14,6,21$
&$42$
&2,3,7
\\
$E_{13}$
&$x^3+xy^5+z^2+\lambda y^8$
&$10,4,15$
&$30$
&2,4,5
\\
$E_{14}$
&$x^3+y^8+z^2+\lambda xy^6$
&$8,3,12$
&$24$
&3,3,4
\end{tabular}
\medskip
\caption{Arnol$'$d's 14 exceptional singularities.}
\label{tab-exceptional-singularities}
\end{table}

\begin{theorem} 
\label{thm-ellipic-and-exceptional-aspherical}
Assuming conjecture~\ref{conj-Coxeter-arrangement-implies-CAT0}, the discriminant complement of a simply
elliptic or exceptional hypersurface singularity is aspherical.
\end{theorem}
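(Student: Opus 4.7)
The plan is, in each of the two families, to realize an orbifold cover of the discriminant complement $S-\D$ as a hyperplane complement $P\Omega(L)-\H$, where $L$ is a lattice of signature $(2,n)$ and $\H$ is the arrangement of orthogonal complements of the norm~$-2$ vectors of $L$. Once this is done, Corollary~\ref{cor-assuming-conjecture-locally-Coxeter-implies-cover-contractible} applies exactly as in the proof of Theorem~\ref{thm-lattice-polarized-K3}, and the theorem follows.

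\textbf{Building the model.} Here one invokes Looijenga's work. For the fourteen exceptional singularities one uses his treatment of triangle singularities: a smoothing is filled in to an anticanonical pair $(Y,D)$ whose boundary $D$ is a cycle of three rational curves encoded by the Dolgachev numbers, and the period map on the moduli of such pairs identifies the discriminant complement, up to a finite orbifold group, with an arithmetic quotient of $P\Omega(L)-\H$. For the simply elliptic cases $\widetilde{E}_6,\widetilde{E}_7,\widetilde{E}_8$, Pinkham's smoothing theory similarly fills in an anticanonical pair whose boundary is a smooth elliptic curve, and the periods again realize $S-\D$ as an arithmetic quotient of a type-IV hyperplane complement of the same form.

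\textbf{Verifying the hypotheses.} The space $P\Omega(L)$, equipped with its $\O(2,n)$-invariant K\"ahler metric, is a complete simply connected Hermitian symmetric manifold of noncompact type, hence has nonpositive sectional curvature. Each component of $\H$ is the fixed locus of the order-two holomorphic isometry given by the reflection in a $(-2)$-vector, so it is totally geodesic of complex codimension $1$. Fix $x\in\H$: the $(-2)$-reflections fixing $x$ generate a finite Coxeter group $W_x$ whose roots span a negative-definite sublattice of $L$ (lying inside the orthogonal complement of the positive $2$-plane corresponding to $x$), and $W_x$ acts on $T_xP\Omega(L)\iso\C^n$ by the complexification of its standard reflection representation. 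Thus $T_x\H$ is the complexified mirror arrangement of $W_x$, which is precisely the local hypothesis of Corollary~\ref{cor-assuming-conjecture-locally-Coxeter-implies-cover-contractible}. Applying the corollary, the universal cover of $P\Omega(L)-\H$ is contractible; passing to the orbifold quotient, $S-\D$ is aspherical.

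\textbf{Main obstacle.} The CAT machinery digests the local differential geometry of $\Gamma\backslash P\Omega(L)$ effortlessly; the real work is entirely in the identification step. One must check (i) exactly which norm~$-2$ vectors of $L$ occur as vanishing cycles and thus contribute a component to $\H$, (ii) that the period map is surjective onto the whole specified complement, not some smaller open subset of it, and (iii) that no orbifold loci arise beyond those produced by $\H$. These are the delicate global statements established by Looijenga (for the triangle singularities) and by Pinkham (in the $\widetilde{E}$ case); granted them, the theorem is a direct application of the machinery of Sections~\ref{sec-Riemannian-global}--\ref{sec-Coxeter-arrangements}.
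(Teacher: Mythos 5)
Your strategy is on the right track at the level of slogans---build a hyperplane‐complement cover using Looijenga, then apply Corollary~\ref{cor-assuming-conjecture-locally-Coxeter-implies-cover-contractible}---but the actual proof has to grapple with two things your proposal gets wrong.

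First, for the simply elliptic singularities $\Etilde_6,\Etilde_7,\Etilde_8$, the ambient space is not a type~IV domain $P\Omega(L)$. Looijenga's description \cite{Looijenga-simply-elliptic-I,Looijenga-simply-elliptic-II} puts the unramified cover inside the \emph{flat} Euclidean space $H=L\tensor\C\iso\C^n$, with $L$ the $E_6$, $E_7$ or $E_8$ root lattice; the branch locus $\D_H$ is the mirror arrangement of the ``double affine Weyl group'' $(L\tensor\Lambda)\semidirect W$, namely the $(L\tensor\Lambda)$-translates of the Weyl mirrors. There is also a $\C^*$-factor. One then applies Corollary~\ref{cor-assuming-conjecture-locally-Coxeter-implies-cover-contractible} to the flat metric on $H$, not to a symmetric space of $\O(2,n)$. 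Your proposal asserts a period‐map identification with a type~IV complement that simply is not the model Looijenga provides, and a different check of hypotheses would be required for it.

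Second, and more seriously, for the exceptional (triangle) singularities the relevant arrangement is $\H=\H_\infty\cup\H_r$, not merely $\H_r$. The $(-2)$-reflection hyperplanes $\H_r$ are there, but Looijenga's ``finite'' locus $T_f$ involves discarding the part of $\Omega$ over the further arrangement $\H_\infty$, whose hyperplanes $y_V^\perp$ for $y\in\E$ are \emph{not} mirrors of reflections preserving $V$. So at a point of $\H_\infty$ the local picture is not automatically a complexified finite Coxeter arrangement, and Corollary~\ref{cor-assuming-conjecture-locally-Coxeter-implies-cover-contractible} does not apply directly. The real work in the paper's proof is Lemma~\ref{lem-exceptional-case-lemma}, which shows: (i) two hyperplanes of $\H_\infty$ of the same ``type'' are disjoint, by a negative‐definiteness calculation bounding $2+N$; (ii) $\H_r$ and $\H_\infty$ meet only orthogonally, so by Lemma~\ref{lem-can-break-arrangement-into-sub-arrangements} the two sub‐arrangements can be handled separately; and (iii) the only remaining local configuration for $\H_\infty$ occurs when $p=q=r=4$, where three components meet in an $A_2$ pattern. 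Without an argument of this kind, one cannot verify the hypotheses of Theorem~\ref{thm-riem-mfld-br-cover-globally-CATx} for $\H_\infty$, and your step ``Applying the corollary'' fails precisely there.

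A smaller omission: the paper first invokes Wirthm\"uller's topological triviality in the Hessian direction to reduce the SUD over $(S,s_0)$ to the restriction over a hypersurface $T$, and treats the non‐quasihomogeneous exceptional cases by a variant of this triviality statement. Some such reduction is needed before Looijenga's models even apply, since his results describe $\D\cap T$, not $\D$ itself.
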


Since the discriminant complement is the complement of one germ inside
another, we clarify: we are asserting that $s_0\in S$ has a basis of
neighborhoods, such that the discriminant complement in each is
aspherical, and any inclusion of one of these discriminant complements
into another is a homotopy equivalence.

\begin{proof}
We postpone the case of non-quasihomogeneous exceptional singularities
to the end of the proof.  Weights and degrees refer to the weighting
of variables given above.  The central object of the proof is the
restriction of $F$ to a certain hypersurface $T$ in $S$.  In every
case one can choose $p_1,\dots,p_\tau$ to be quasihomogeneous, with
all but one of them having degree less than that of $f$.  We take the
exceptional one to be $p_\tau:=$ the part of $f$ involving $\lambda$.
We define $T\sset S$ by $t_\tau=0$.  It turns out that $p_\tau$ is the
Hessian of $f$, up to quotienting by the Jacobian ideal and scaling.
Then a theorem of Wirthm\"uller \cite[Satz~3.6]{Wirthmuller} says that $F$ is
topologically trivial in the $\tau$'th direction.  Precisely: there
are self-homeomorphisms $\Phi$ of $(\C^{3+\tau},0)$ and $\phi$ of
$(S,s_0)$ such that $\phi\circ F=F\circ\Phi$ and
$\Phi(\frakX)=\frakX|_T\times(\C,0)$.  From this we can deduce that
$\phi(\D)=(\D\cap T)\times(\C,0)$.  To see this, note that $\D$ is the
closure of the locus in $S$ over which the fibers of $\frakX$ are not
topological manifolds. (This formulation circumvents any worries about
some fibers being singular as complex hypersurfaces but nonsingular
topologically.)  This shows that $\D$ and $\phi(\D)$ are determined by
the topology of the fibers of $F$ and $F\circ\Phi$, so
$\phi(\D)=(\D\cap T)\times(\C,0)$.  What remains is to prove $T-\D$
aspherical.

Looijenga analyzed $T-\D$ in the simply-elliptic case in
\cite{Looijenga-simply-elliptic-I}.  Take $L$ to be the $E_6$, $E_7$
or $E_8$ root lattice, according to the type of the singularity, and
$W$ the corresponding (finite) Weyl group.  Let $\Lambda\sset\C$ be a
lattice for which $\C/\Lambda$ is isomorphic to the exceptional
divisor in the minimal resolution of $X$ (which is an elliptic curve,
so $\Lambda$ exists).  Both $L\tensor\Lambda\iso\Z^{12,\,14,\,{\rm
    or}\,16}$ and $W$ act on $H:=L\tensor\C$, the former by
translations and the latter via its action on $L$.  Looijenga defines
the ``double affine Weyl group'' $M:=(L\tensor\Lambda)\semidirect W$,
and $\D_H$ as the union of the mirrors of the complex reflections of
$M$.  These are just the $L\tensor\Lambda$-translates of $W$'s
mirrors.  In \cite[Rk.~7.10]{Looijenga-simply-elliptic-II}, he shows
that $T-\D$ has an unramified covering space biholomorphic to
$(H-\D_H)\times\C^*$.  

That is, $(T,s_0)$ has a representative with this property.  By
Looijenga's description, there is a basis of neighborhoods of $s_0$ in
$T$ whose preimages in $H\times\C^*$ have the form
$H\times{}$(exterior of a disk centered at $0$).  So the asphericity
of $(T,s_0)-(\D,s_0)$ reduces to that of $H-\D_H$.  We equip $H$ with
the standard Euclidean metric.  Essentially by definition, $\D_H$ is
locally modeled on finite Coxeter arrangements, so we can apply
corollary~\ref{cor-assuming-conjecture-locally-Coxeter-implies-cover-contractible},
which we recall assumes
conjecture~\ref{conj-Coxeter-arrangement-implies-CAT0}.  This finishes
the proof in the simply-elliptic case.

In the (still quasihomogeneous) exceptional case, there is again an
unramified cover of $T-\D$ which is a $\C^*$-bundle over a hyperplane
arrangement complement.  The details are much more complex, and we
need to present certain of them in order to describe the arrangements
well enough to apply
theorem~\ref{thm-riem-mfld-br-cover-globally-CATx}.  We follow
Looijenga \cite{Looijenga-ICM} for an overview of the construction,
which depends essentially on a method of Pinkham \cite{Pinkham}.  The
whole theory is laid out in much more detail and generality in
\cite{Looijenga-triangle-singularities-II}, and we will use its
description of the hyperplane arrangement rather than the one in
\cite{Looijenga-ICM}.  See also Brieskorn's lovely paper
\cite{Brieskorn-Leopoldina} for a tour of the ideas.  In the notation
of \cite{Looijenga-triangle-singularities-II}, the (quasihomogeneous)
exceptional singularities are $D_{p,q,r}$ triangle singularities,
where $p,q,r$ are the Dolgachev numbers given in
table~\ref{tab-exceptional-singularities}.  Note that
$\frac{1}{p}+\frac{1}{q}+\frac{1}{r}<1$ in every case.

Here is as much detail as we will need.  Pinkham found a
representative for $\frakX|_T$ which is an algebraic family over
$\C^{\tau-1}$.  This is constructed from $\frakX|_T$ by a
quasihomogeneous scaling process, so the discriminant complement in
$(T,s_0)$ is homotopy-equivalent to that in the algebraic family.
So henceforth $T$ will refer to the base of this algebraic family and
$\frakX|_T$ to the total space.  Pinkham also showed that this family
of algebraic surfaces may be simultaneously compactified by adjoining
a suitable divisor.

That is, there is a proper flat family over
$T$ and a divisor therein whose complement is $\frakX|_T$.  This
divisor has $p+q+r-2$ components, each of which meets every fiber in a
smooth rational curve of self-intersection $-2$.  The rational curves
in each fiber meet each other transversally with incidence graph the
$Y_{p,q,r}$ diagram
\begin{equation}
\label{eq-Y-p-q-r-graph}
\psset{unit=.0007cm}
\def\noderadius{70}
\begin{pspicture}(-3464,-4000)(3464,2000)
\def\Ax{866}
\def\Ay{500}
\def\Bx{1299}
\def\By{750}
\def\Cx{2165}
\def\Cy{1250}
\def\Dx{2598}
\def\Dy{1500}
\def\Ex{3464}
\def\Ey{2000}
\psset{dash=2pt 2pt}
\qdisk(0,0){\noderadius}
\qline(0,0)(\Bx,\By)
\qline(\Cx,\Cy)(\Ex,\Ey)
\psline[linestyle=dashed](\Bx,\By)(\Cx,\Cy)
\qdisk(\Ax,\Ay){\noderadius}
\qdisk(\Dx,\Dy){\noderadius}
\qdisk(\Ex,\Ey){\noderadius}
\qline(0,0)(-\Bx,\By)
\qline(-\Cx,\Cy)(-\Ex,\Ey)
\psline[linestyle=dashed](-\Bx,\By)(-\Cx,\Cy)
\qdisk(-\Ax,\Ay){\noderadius}
\qdisk(-\Dx,\Dy){\noderadius}
\qdisk(-\Ex,\Ey){\noderadius}
\qline(0,0)(0,-1500)
\qline(0,-2500)(0,-4000)
\psline[linestyle=dashed](0,-1500)(0,-2500)
\qdisk(0,-1000){\noderadius}
\qdisk(0,-3000){\noderadius}
\qdisk(0,-4000){\noderadius}
\end{pspicture}
\end{equation}
where the arms have $p$, $q$ and $r$ vertices including the central
vertex.  Every smooth fiber is a K3 surface.  We write $L$ for
$H^2(\hbox{generic fiber};\discretionary{}{}{}\Z)\iso
E_8^2\oplus\bigl(\begin{smallmatrix}0&1\\1&0\end{smallmatrix}\bigr)^3$,
  $Q$ for the $\Z$-span of these $(-2)$-curves, and $V$ for
  $Q^\perp\sset L$, which turns out to have signature $(2,22-p-q-r)$.
  The fundamental group of $T-\D$ acts on $L$ via its monodromy
  representation, fixing each of the $(-2)$-curves, hence acting on
  $V$.  Write $M$ for its image.  The unramified cover $T'_0$ of
  $T_0:=T-\D$ associated to the kernel of the monodromy representation
  turns out to be $\Omega:=\Omega(V)$, minus the subset lying over a
  hyperplane arrangement in the symmetric space $P\Omega$. ($\Omega(V)$
  was defined in the previous section.)

To quote Looijenga's theorem we must do a little more preparation.
Let $T_f$ be the set of $t\in T$ which admit a neighborhood $U\sset T$
such that the monodromy of $\pi_1(U-\D)$ is a finite group.  The
unramified covering $T'_0\to T_0$ extends naturally to a branched
cover $T'_f\to T_f$.  (This is the normalization of $T'_0$ over
$T_f$, not the current paper's notion of branched cover, although in
the end they are the same.)  Looijenga shows
(\cite[\S5]{Looijenga-ICM} or
\cite[III(6.4)]{Looijenga-triangle-singularities-II}) that $T_f$ is
$M$-equivariantly diffeomorphic to $\Omega\sset V\tensor\C$, minus the
part of $\Omega$ that lies over a certain hyperplane arrangement
$\H_\infty\sset P\Omega$, described in detail in the proof of
lemma~\ref{lem-exceptional-case-lemma} below.  Identifying $T'_f$
with its image in $\Omega$, it turns out that one of its points lies
outside $T'_0$ just if it is orthogonal to a norm $-2$ vector of $V$.
We write $\H_r$ for the corresponding hyperplane arrangement in
$P\Omega$; the $r$ subscript is for ``reflection'', since $M$ contains
the reflections in these vectors.  We summarize the development so
far: $T_0$ has an unramified cover $T'_0$ that is a $\C^*$-bundle
over $P\Omega-\H$, where $\H:=\H_\infty\cup\H_r$.  So the asphericity
of $T_0$ is reduced to that of $P\Omega-\H$, which we establish in
lemma~\ref{lem-exceptional-case-lemma} below (which uses
conjecture~\ref{conj-Coxeter-arrangement-implies-CAT0}).  This finishes the proof in the quasihomogeneous
case.

In the non-quasihomogeneous case, say with $\lambda=1$, the versal
deformation is got from \eqref{eq-versal-deformation} by restricting to the $t_\tau=1$
subspace of the $S$ we used for the corresponding quasihomogeneous
case, call it $\Sigma$.  It turns out that the inclusion
$\D\cap\Sigma\sset\Sigma$ is topologically equivalent to the
inclusion $\D\cap T\sset T$ in the corresponding quasihomogeneous
case, so the asphericity of the discriminant complement reduces to what
we have already proved.  The topological equivalence follows from a
slight strengthening of the topological triviality we used above: the
homeomorphism $\phi:S\to S$ may be taken to preserve the family of
hypersurfaces $t_\tau={\rm constant}$.  
For this we refer to Wirthm\"uller \cite{Wirthmuller}.
\end{proof}

\begin{remark}
It takes some work to extract the required topological triviality
results from the literature.  In the simply-elliptic case Looijenga
\cite{Looijenga-simply-elliptic-I} explicitly states everything we
need.  Wirthm\"uller's Satz~3.6 doesn't quite state what we need,
merely that the SUD is a topological product of the Hessian direction
with some analytic hypersurface $T$.  However, his proof
\cite[p.~63]{Wirthmuller} shows that our $T$ will serve in this role.
Wirthm\"uller's thesis remains unpublished, though a statement appears
in \cite{Damon-topological-triviality-in-versal-unfoldings}.  His work
was later absorbed into a large machine of Damon.  To extract the
results we need from Damon's work, apply
\cite[6.7(ii)]{Damon-finite-determinacy-and-topological-triviality-II}
to $f$ (there called $f_0$) to deduce that $F|_T$ (there called $f$)
is ``finitely $\mathcal{A}$-determined'', and then apply
\cite[Cor.~3]{Damon-finite-determinacy-and-topological-triviality-I}
to deduce that $F$ is a topologically trivial unfolding of $F|_T$.
\end{remark}

\begin{lemma}
\label{lem-exceptional-case-lemma}
In the notation of the previous proof, and assuming
conjecture~\ref{conj-Coxeter-arrangement-implies-CAT0}, $P\Omega-\H$
is aspherical.
\end{lemma}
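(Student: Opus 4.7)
The plan is to apply theorem~\ref{thm-riem-mfld-br-cover-globally-CATx} with $M=P\Omega(V)$ and $\D=\H=\H_r\cup\H_\infty$. Since $P\Omega$ is a Hermitian symmetric space of noncompact type, it is a complete, simply connected Riemannian manifold of nonpositive sectional curvature, and every component of $\H$ is a totally geodesic complex hypersurface. Once the local hypotheses of that theorem are verified, it yields contractibility of the universal cover of $P\Omega-\H$, hence asphericity. (The lemma is about asphericity only, so we do not need to recover the \CATx\ conclusion in the end.)

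The hypotheses to verify are local, one point $x\in P\Omega$ at a time: that $T_xM-T_x\H$ is aspherical, and that the universal branched cover of $T_xM$ over $T_x\H$ is \CAT0. The plan is to split the tangent arrangement $T_x\H$ into $T_x\H_r$ and $T_x\H_\infty$ and feed them into lemma~\ref{lem-can-break-arrangement-into-sub-arrangements}. The decisive point is Hermitian orthogonality between components of $\H_r$ and of $\H_\infty$ at every common point; I would establish this by using that $\H_r$ consists of orthogonal complements of norm~$-2$ vectors in $V$ while $\H_\infty$ arises in Looijenga's description from isotropic data (its components are cut out by conditions at the boundary of $\Omega$), so any joint incidence forces the two lattice-theoretic data to be mutually orthogonal in $V$. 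Granting this, lemma~\ref{lem-can-break-arrangement-into-sub-arrangements} reduces the two hypotheses to the same statements separately for $T_x\H_r$ and for $T_x\H_\infty$.

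For $T_x\H_r$ the picture is straightforward. The norm~$-2$ vectors of $V$ whose hyperplanes pass through $x$ span a negative-definite sublattice (because the positive-definite $2$-plane $x$ is orthogonal to all of them), so they form a finite ADE root system and the reflections in them generate a finite Coxeter group. Thus $T_x\H_r$ is isomorphic to the complexified mirror arrangement of a finite Coxeter group, and corollary~\ref{cor-assuming-conjecture-locally-Coxeter-implies-cover-contractible}---i.e., Deligne's theorem for the asphericity of $T_xM-T_x\H_r$ together with conjecture~\ref{conj-Coxeter-arrangement-implies-CAT0} for the \CAT0-ness of its universal branched cover---handles this factor.

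The hard part, and the main obstacle, is $\H_\infty$, which in general is \emph{not} locally modeled on a finite Coxeter arrangement. Here I would exploit Looijenga's explicit combinatorial description in \cite{Looijenga-triangle-singularities-II}: the components of $\H_\infty$ correspond to isotropic sublattices in $V$, and their incidence pattern at a given $x$ is controlled by a subdiagram of the extended $Y_{p,q,r}$ diagram~\eqref{eq-Y-p-q-r-graph}. The strategy is to recognize $T_x\H_\infty$ as the complexification of a (possibly infinite) linear reflection arrangement whose underlying Coxeter group $W_x$ is again a finite Coxeter group on the directions transverse to an isotropic flat direction---so that after a suitable further orthogonal product decomposition (via lemma~\ref{lem-can-break-arrangement-into-sub-arrangements}) one is again in the Coxeter case covered by corollary~\ref{cor-assuming-conjecture-locally-Coxeter-implies-cover-contractible}. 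If at some boundary-type $x$ the arrangement genuinely fails to be locally Coxeter, the fallback plan is to mimic the Tits-cone argument used in theorem~\ref{thm-Arnold-Pham-Thom-conjecture}: present the local model as a tangent bundle to an open cone on which a reflection group acts and reduce to the Coxeter case by the same distance-nonincreasing retraction. In either route the verification is a finite case analysis over the $14$ exceptional singularities, using the Dolgachev numbers of table~\ref{tab-exceptional-singularities} to check that at every stratum the local tangential arrangement splits orthogonally into pieces to which our machinery applies.
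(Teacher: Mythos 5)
Your overall frame is right---apply theorem~\ref{thm-riem-mfld-br-cover-globally-CATx} to $M=P\Omega(V)$ with $\D=\H=\H_r\cup\H_\infty$, use lemma~\ref{lem-can-break-arrangement-into-sub-arrangements} to split the verification along $\H_r$ and $\H_\infty$ once you know they cross orthogonally, and dispatch $\H_r$ via corollary~\ref{cor-assuming-conjecture-locally-Coxeter-implies-cover-contractible}. But the two key steps are wrong, and the real content of the paper's proof is missing.

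First, your mechanism for orthogonality of $\H_r$ and $\H_\infty$ does not hold. You say $\H_\infty$ ``arises from isotropic data,'' but in the setup actually used here (from \cite{Looijenga-triangle-singularities-II}) each component of $\H_\infty$ is $y_V^\perp$ for a root $y\in\E\sset L$ with $y^2=-2$, incidence $\pm1$ with two ends of the $Y_{p,q,r}$ diagram not in the core, and $y_V$ of strictly negative norm $-2-N$---not isotropic. So there is no a priori lattice-theoretic reason for orthogonality. The paper's argument is indirect: it \emph{first} proves that any two components of $\H_\infty$ of the same ``type'' $\{e,e'\}$ have disjoint hyperplanes in $P\Omega$ (a negative-definiteness computation using the explicit formula \eqref{eq-formula-for-y-projected-to-Q} for $y_Q$ and the inequality $2+N\leq\tfrac12$), and \emph{then} deduces orthogonality of $\H_r$ with $\H_\infty$ by the reflection argument: if a component of $\H_r$ met a component of $\H_\infty$ non-orthogonally, the reflection in the former would carry the latter to a distinct component of the same type meeting it, contradicting disjointness.

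Second, your picture of $\H_\infty$ is upside down. You treat it as the hard, ``non-Coxeter'' piece and plan a Tits-cone-style fallback. But the same disjointness claim above shows that at any intersection point of $\H_\infty$-components, all components present have distinct types; since there are only three possible types, and only when no end of $Y_{p,q,r}$ lies in the core, a scan of the Dolgachev numbers in table~\ref{tab-exceptional-singularities} reduces to the single case $p=q=r=4$. A direct computation there forces $\alpha=y\cdot y'=1$, so $y''=-y-y'\in\E$ is the third type and the local picture is exactly the $A_2$ mirror arrangement $y_V^\perp\cup {y'_V}^\perp\cup{y''_V}^\perp$ with $y_V+y'_V+y''_V=0$. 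In short: in the unimodal exceptional cases, $\H_\infty$ \emph{is} locally Coxeter ($A_1$ or $A_2$), and the appeal is again to corollary~\ref{cor-assuming-conjecture-locally-Coxeter-implies-cover-contractible}, not to a Tits-cone detour. The ``not locally Coxeter'' phenomenon you worry about belongs to the more general triangle singularities discussed in the remark after the lemma, not to the cases at hand. The one idea your proposal lacks, and which does all the work, is the disjointness of same-type components of $\H_\infty$; without it neither the orthogonality nor the local description of $\H_\infty$ can be established.
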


\begin{proof}
We begin by describing $\H_\infty\sset P\Omega$ where
$\Omega=\Omega(V)$ was defined in section~\ref{sec-Coxeter-arrangements}.  We use the
description from \cite[II\S6]{Looijenga-triangle-singularities-II}.
Recall that $L$ contains the classes of curves intersecting in the
$Y_{p,q,r}$ pattern, $Q$ is their span, and $V=Q^\perp\sset L$ has
signature $(2,22-p-q-r)$.  We attach subscripts $Q$ and $V$ to vectors
to indicate their projections to (the rational spans of) these
lattices.  We define the {\it core} of $Y_{p,q,r}$ to be its
$\Etilde_6$ diagram if it contains one, otherwise its $\Etilde_7$
diagram if it contains one, and otherwise its $\Etilde_8$ diagram.
Let $\E$ be the set of $y\in L$ such that $y^2=-2$, $y\cdot e=1$ for
one end $e$ of $Y_{p,q,r}$ that is not in the core, $y\cdot e'=-1$ for
another end $e'$ also not in the core, and $y$ is orthogonal to all
other roots of $Y_{p,q,r}$.  We will see that $y_V$ has negative norm,
so $y_V^\perp$ defines a hyperplane in $P\Omega$.  Then
$\H_\infty=\cup_{y\in\E}\,y_V^\perp$.  We need to study how these
hyperplanes can meet.  The method is simple: the hyperplanes of
$y,y'\in\E$ meet just if the inner product matrix of $y_V,y'_V$ is
negative-definite.  We can understand this matrix in terms of $y\cdot
y'$ and the inner product matrix of $y_Q,y_Q'$.

The following model of $Q$ is convenient for calculations.  We use
$1+p+q+r$ variables, separated into blocks of sizes $1$, $p$, $q$ and
$r$.  The inner product matrix is
$$
\diag[1;-1,\dots,-1;-1,\dots,-1;-1,\dots,-1]
$$
and the central root has components 
$$
(1;1,0,\dots,0;1,0,\dots,0;1,0,\dots,0).
$$
The roots along the $p$-arm have components
$(-1,1,0,\dots,0),\dots,(0,\discretionary{}{}{}\dots,0,-1,1)$ in the $p$-block and all other
coordinates zero, and similarly for  the $q$- and $r$-arms.  We write $e_p$,
$e_q$ and $e_r$ for the roots corresponding to the end nodes.  
If $t\in\E$ has nonzero inner products with $e_p$ and $e_q$ then we
say it has type $\{e_p,e_q\}$.
All vectors in $Q$ satisfy the $3$ linear relations that
the sum of the coordinates in a block is independent of the block.  If
$y\in\E$ has  $y\cdot e_p=1$ and $y\cdot e_q=-1$, then
one can compute
\begin{equation}
\label{eq-formula-for-y-projected-to-Q}
y_Q=(a;b,\dots,b,b-1;c,\dots,c,c+1;d,\dots,d)
\end{equation}
where
$$
a=\frac{\textstyle\frac{1}{p}-\frac{1}{q}}{\textstyle1-\frac{1}{p}-\frac{1}{q}-\frac{1}{r}}
\qquad
b=\frac{a+1}{p}
\qquad
c=\frac{a-1}{q}
\quad
\hbox{and}
\quad
d=\frac{a}{r}.
$$
We write $N$ for its norm.  Then $y_V^2=-2-N$, which is negative
because calculation reveals
\begin{equation}
\label{eq-formula-for-N+2}
2+N=
\frac{\textstyle\bigl(\frac{1}{p}-\frac{1}{q}\bigr)^2}{\textstyle1-\frac{1}{p}-\frac{1}{q}-\frac{1}{r}}
+\frac{1}{p}+\frac{1}{q}>0.
\end{equation}

Our first claim is that if $y,y'\in\E$ have the same type then their
hyperplanes in $P\Omega$ are disjoint.  For suppose to the contrary
and that both have type $\{e_p,e_q\}$, so that $y_Q$ and $y_Q'$ both
equal \eqref{eq-formula-for-y-projected-to-Q}, after negating $y,y'$ if needed.  Setting $\alpha=y\cdot
y'$, the inner product matrix of $y_V,y_V'$ is
$$
\begin{pmatrix}
-2-N & \alpha-N\\ \alpha-N & -2-N
\end{pmatrix}.
$$
Because the hyperplanes meet, this matrix is negative-definite.  Since
its diagonal entries are negative, this is the same as the determinant
being positive, which boils down to $|\alpha-N|<2+N$.  That is,
$$
-2<\alpha<2+2N.
$$ This is a contradiction because $\alpha\in\Z$ and the right side
is${}\leq-1$.  This inequality is the same as $2+N\leq\frac{1}{2}$,
which is easy to verify in the cases $p=q$ and $(p,q,r)=(4,5,2)$,
where \eqref{eq-formula-for-N+2} reduces to $2/p$ and $1/2$.  (Note:
$p,q\geq4$ because neither $e_p$ nor $e_q$ is in the core.)  The general
case follows from this, $\partial N/\partial q\leq0$ and symmetry in
$p$ and $q$.  We have proven our disjointness claim.

It follows that if a component of $\H_\infty$ and a component of
$\H_r$ meet, then they meet orthogonally.  Otherwise, the reflection
across the latter component would carry the former to another
component of $\H_\infty$ of same type, meeting it nontrivially.  We
use this
orthogonality to break our problem into two simpler problems.
Recall that $P\Omega$ is complete and nonpositively curved.  So to
finish the proof it suffices to verify that $\H$ satisfies hypotheses
\eqref{hyp-AGAIN-T-x-M-T-x-Delta-is-aspherical} and
\eqref{hyp-CAT0-tangent-spaces-in-branched-cover} of
theorem~\ref{thm-riem-mfld-br-cover-globally-CATx}.  By
lemma~\ref{lem-can-break-arrangement-into-sub-arrangements}
it suffices to do this for $\H_r$ and $\H_\infty$ separately.  The
case of $\H_r$ is
corollary~\ref{cor-assuming-conjecture-locally-Coxeter-implies-cover-contractible}
(which uses
conjecture~\ref{conj-Coxeter-arrangement-implies-CAT0}).  So it
suffices to show that for every $x\in P\Omega$,
$T_xP\Omega-T_x\H_\infty$ is aspherical and that the universal
branched cover of $T_xP\Omega$ over $T_x\H_\infty$ is \CAT0.

We have seen that two components of $\H_\infty$ can
meet only if they have different types.  So there is nothing to prove
unless two types are present, which requires that none of the three end
nodes of $Y_{p,q,r}$ lies in the core.  Inspecting the list of
Dolgachev numbers, we see that $p=q=r=4$ is the only case remaining.

So suppose $y\in\E$ projects to \eqref{eq-formula-for-y-projected-to-Q}, and $y'$ similarly with
$p,q,r$ cyclically permuted.  The formulas simplify dramatically since
$p=q=r$, and one finds that $y_Q$ and
$y_Q'$ have norm $-3/2$ and inner product $3/4$.
It follows that $y_V$ and $y_V'$ have inner product matrix
$$
\begin{pmatrix}
-1/2 & \alpha-3/4\\
\alpha-3/4 & -1/2
\end{pmatrix},
$$ and from negative-definiteness that $\alpha=1$.  Therefore
$y''=-y-y'$ lies in $\E$ and has the third type.  We conclude that
where two components of $\H_\infty$ meet, a third does also.  And this
intersection meets no other components of $\H_\infty$ because it
already lies in one of each type.  So $\H_\infty$ is locally modeled
on $y_V^\perp\cup y_V'^\perp\cup y_V''^\perp$ where $y_V$, $y_V'$ and
$y_V''$ are three norm $-1/2$ vectors with sum~$0$.  This is the
hyperplane arrangement for the Coxeter group $W(A_2)$, so we can
complete the proof by appealing to
corollary~\ref{cor-assuming-conjecture-locally-Coxeter-implies-cover-contractible}.
(This part of the proof doesn't really need
conjecture~\ref{conj-Coxeter-arrangement-implies-CAT0}; see the next
remark.)
\end{proof}

\begin{remark}[Triangle singularities]
The appearance of the $A_2$ arrangement is a coincidence arising from
$p=q=r$, but the argument works more generally.  Looijenga
\cite{Looijenga-triangle-singularities-II} treats $D_{p,q,r}$ for
general $p,q,r$, not just the ones in
table~\ref{tab-exceptional-singularities}.  All our arguments go
through, the only differences being that one addresses one smoothing
component of the singularity at a time, and the calculations on the
intersection of components of $\H_\infty$ are messier.  There are only
finitely many cases, because $D_{p,q,r}$ is non-smoothable for
$p+q+r>22$.  A short computer calculation shows $\alpha=1$ in all
cases.  It follows that $\H_\infty$ is locally of the form
$y_V^\perp\cup {y_V'}^\perp\cup{y_V''}^\perp$ where
$y_V+y_V'+y_V''=0$, i.e., three hyperplanes whose intersection has
codimension~$1$ in each of them.  Local asphericity is easy, and the
question of \CAT0-ness of tangent spaces boils down to a study of the
universal cover of $\C^2$ branched over three lines whose
corresponding points in $\cp^1\iso S^2$ lie on a great circle but in
no open hemisphere.  The methods of Charney and Davis
\cite[theorem~9.1]{CD}, adapted to allow infinite branching, apply
here.
\end{remark}

\begin{remark}[The $N_{16}$ singularity]
An $N_{16}$ singularity is the 2-fold cover of $\C^2$ branched over a
union of 5 lines through the origin, and is a trimodal singularity.
Laza has shown \cite[Thm.~5.6]{Laza} that after discarding a
1-dimensional topologically trivial factor, its discriminant
complement has the same form as a triangle singularity.  Namely, it is
a $\C^*$-bundle over $P\Omega(T)-(\H_r\cup\H_\infty)$ for a suitable
lattice $T$ of signature $(2,14)$.  Here $\H_r$ is as above and
$\H_\infty$ is similar to the above.  His $\H_\infty$ is more
complicated than for a triangle singularity and I have not studied it
in any detail.  He has obtained similar but unpublished results for
the $O_{16}$ singularity (the cone on a cubic surface).
\end{remark}

\begin{theorem}
\label{thm-cusp-discriminant-complement-aspherical}
Assuming conjecture~\ref{conj-Coxeter-arrangement-implies-CAT0}, the cusp
singularities \eqref{eq-cusp-Tpqr} have aspherical discriminant complements.
\end{theorem}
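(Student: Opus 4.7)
The plan mirrors the exceptional-singularity case of Theorem~\ref{thm-ellipic-and-exceptional-aspherical}, but with the Hermitian symmetric space $P\Omega$ replaced by (an appropriate piece of) the tangent bundle of the open Tits cone $C\sset\R^n$ of the Coxeter group $W$ with diagram $Y_{p,q,r}$, and with Theorem~\ref{thm-Arnold-Pham-Thom-conjecture} used in place of Corollary~\ref{cor-assuming-conjecture-locally-Coxeter-implies-cover-contractible}. The condition $\frac1p+\frac1q+\frac1r<1$ is exactly what makes $W$ a (typically hyperbolic) infinite Coxeter group with a proper open Tits cone, which explains why the cusp case, unlike the elliptic and exceptional cases, forces the Tits-cone formulation.

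First I would pass from the full SUD of $T_{p,q,r}$ to a hypersurface on which the discriminant complement is, up to homotopy, unchanged. Since $T_{p,q,r}$ is not quasihomogeneous, one does not have Wirthm\"uller's Hessian-direction triviality verbatim; instead one uses the deformation of $T_{p,q,r}$ toward its leading (non-isolated) quasihomogeneous part $x^p+y^q+z^r+xyz$ and invokes the Damon finite-determinacy results cited in the previous proof to trivialise the transverse parameter. Next I would invoke Looijenga's period-map analysis of cusp singularities, whose picture is parallel to that used for triangle singularities: the resulting base $T$ admits a simultaneous compactification by a cycle of smooth rational $(-2)$-curves, and the discriminant complement $T-\D$ carries an unramified cover that is a $\C^*$-bundle over a complement $P\Omega' - \H$ where $\H=\H_r\cup\H_\infty$ and $P\Omega'$ is now (the relevant connected component of) the tangent bundle $TC$ of the Tits cone of $W=Y_{p,q,r}$. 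Here $\H_r$ records the components of the discriminant coming from norm $-2$ classes (collapsing $(-2)$-curves to du~Val singularities) and $\H_\infty$ records the boundary/cusp data (degenerations of the anticanonical cycle).

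I would then check that, exactly as in Lemma~\ref{lem-exceptional-case-lemma}, the components of $\H_\infty$ never meet each other and meet $\H_r$ orthogonally, so by Lemma~\ref{lem-can-break-arrangement-into-sub-arrangements} asphericity of $P\Omega'-\H$ reduces to asphericity of the two arrangements separately. The $\H_r$ part is locally modelled on complexified mirror arrangements of finite Coxeter (ADE) groups and so is handled by Corollary~\ref{cor-assuming-conjecture-locally-Coxeter-implies-cover-contractible}, which is the only place Conjecture~\ref{conj-Coxeter-arrangement-implies-CAT0} enters. The $\H_\infty$ part, tangent bundles to the mirrors of $W$ inside $TC$, is exactly the setup of Theorem~\ref{thm-Arnold-Pham-Thom-conjecture}, whose conclusion (contractibility of the universal cover of $TC$ minus the tangent bundles of the mirrors) gives the desired asphericity. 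Running the $\C^*$-bundle and topological-triviality reductions in reverse then gives asphericity of the cusp discriminant complement.

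The main obstacle, as flagged in the introduction, is that the Tits cone $C$ carries no natural Riemannian metric, so Theorem~\ref{thm-riem-mfld-br-cover-globally-CATx} cannot be invoked verbatim to handle $\H_\infty$. I would circumvent this exactly as in the proof of Theorem~\ref{thm-Arnold-Pham-Thom-conjecture}: pass to the complexification, use the Deligne-complex description and the distance-nonincreasing retraction from the full branched cover onto the real locus, and transfer \CAT0-ness back along that retraction. A secondary technical point is that one must verify that Looijenga's compactification and period-map description indeed produce the Tits-cone picture with the arrangements $\H_r$ and $\H_\infty$ as described; this is essentially a bookkeeping job using his cusp papers in place of \cite{Looijenga-triangle-singularities-II}, together with the computation of inner products of the boundary classes that was carried out for triangle singularities.
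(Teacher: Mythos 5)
Your proposal has the right spirit—the $Y_{p,q,r}$ Weyl group and its Tits cone do appear, as does the issue that the Tits cone lacks a natural Riemannian metric—but the structure you describe does not match how the cusp case actually works, and the gaps are not bookkeeping but conceptual.

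First, the Wirthm\"uller/Damon reduction you propose at the outset is not needed and not used: Looijenga's description in \cite{Looijenga-rational-surfaces} applies to the full SUD directly, not to a hypersurface slice $T$ of it. More importantly, there is no $\C^*$-bundle over a period domain and there is no $\H_\infty$ in the cusp case. The paper's proof uses Looijenga's result that $S-\D\iso M_d-D_d=(\Omega_d-\H)/\Wtilde$, where $\Omega_d=\{x+iy\in Q\tensor\C : y\in I^\circ\}$ is the complexified open Tits cone of $W(Y_{p,q,r})$, $\Wtilde:=Q\semidirect W$ acts by translations and reflections, and $\H$ is the \emph{mirror arrangement of $\Wtilde$}. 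This $\H$ includes $Q$-translates of the mirrors of $W$, so it is an ``affine'' arrangement analogous to the double affine Weyl group arrangement in the simply-elliptic case, not the purely linear arrangement to which Theorem~\ref{thm-Arnold-Pham-Thom-conjecture} applies. There is no second component $\H_\infty$ coming from boundary data, so Lemma~\ref{lem-can-break-arrangement-into-sub-arrangements} and the inner-product computations from Lemma~\ref{lem-exceptional-case-lemma} play no role here.

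Second, the device the paper uses to deal with the metric problem is different from what you suggest. You propose to run the proof of Theorem~\ref{thm-Arnold-Pham-Thom-conjecture} (Deligne-complex description, distance-nonincreasing retraction onto the real locus). The paper instead exploits the fact that $Q$ is hyperbolic of signature $(1,p+q+r-3)$: the positive-norm cone $I'^\circ\sset I^\circ$ gives a subdomain $\Omega_d':=\{x+iy:y\in I'^\circ\}$ which \emph{is} a symmetric space, namely $P\Omega\bigl(Q\oplus\bigl(\begin{smallmatrix}0&1\\1&0\end{smallmatrix}\bigr)\bigr)$ for $\O(2,p+q+r-2)$. So Corollary~\ref{cor-assuming-conjecture-locally-Coxeter-implies-cover-contractible} applies directly to $\Omega_d'-\H$ (the point stabilizers in $\Omega_d$ are finite reflection groups by \cite[(2.17)]{Looijenga-root-systems}, so $\H$ is locally a finite Coxeter arrangement). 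One then shows $\Omega_d'-\H\hookrightarrow\Omega_d-\H$ is a homotopy equivalence via a $W$-equivariant deformation retraction of $I^\circ$ onto $I'^\circ$, applied to the imaginary parts. This is the route around the metric problem; Theorem~\ref{thm-Arnold-Pham-Thom-conjecture} is never invoked in the cusp proof.
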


\begin{table}
\def\clap#1{\hbox to 0pt{\hss#1\hss}}
\begin{tabular}{ccccc}
       &                  &$\Zykel$                   &$\Zykel^*$\\
$p,q,r$&$c_0,\dots,c_{m-1}$&\clap{$c_0',\dots,c_{m-1}'$}&$d_0',\dots,d_{s-1}'$&\clap{$d_0,\dots,d_{s-1}$}\\
\noalign{\smallskip}
\hline
\noalign{\smallskip}
$2,3,7$&
$1$&
$3$&
$3$&
$1$
\\
$2,4,5$&
$2$&
$4$&
2,3&
$\leftarrow$
\\
$3,3,4$&
$3$&
$5$&
$2,2,3$&
$\leftarrow$
\\
\noalign{\smallskip}
\hline
\noalign{\smallskip}
$2,3,r$&
$3,2^{r-7}$&
$\leftarrow$&
$r-4$&
$r-6$
\\
$2,4,r$&
$4,2^{r-5}$&
$\leftarrow$&
$2,r-2$&
$\leftarrow$
\\
$3,3,r$&
$5,2^{r-4}$&
$\leftarrow$&
$2,2,r-1$&
$\leftarrow$
\\
\noalign{\smallskip}
\hline
\noalign{\smallskip}
$2,q,r$&
$3,2^{q-5},3,2^{r-5}$&
$\leftarrow$&
$q-2,r-2$&
$\leftarrow$
\\
$3,q,r$&
$3,2^{q-4},4,2^{r-4}$&
$\leftarrow$&
$q-1,2,r-1$&
$\leftarrow$
\\
$p,q,r$&
$3,2^{p-4},3,2^{q-4},3,2^{r-4}$&
$\leftarrow$&
$p-1,q-1,r-1$&
$\leftarrow$
\end{tabular}
\medskip
\caption{Data concerning the cusp singularities needed in the proof of
  theorem~\ref{thm-cusp-discriminant-complement-aspherical}.  The
  notation $2^n$ means a string $2,\dots,2$ of $n$ many $2$'s, and
  ``$\leftarrow$'' means ``the same as in the column to the left''.
  We assume $p\leq q\leq r$, and earlier lines take precedence, for
  example the last line applies only when $4\leq p\leq q\leq r$.}
\label{tab-cusp-singularities}
\end{table}

\begin{proof}
Looijenga \cite{Looijenga-rational-surfaces} found a very beautiful
description of $(S,\D)$.  The brief version is that $S-\D$ is
$(\Omega_d-\H)/\Wtilde$, where $\Omega_d$ is the complexified open
Tits cone of the Weyl group $W$ with diagram $Y_{p,q,r}$, an action of
$\Wtilde\iso\Z^{p+q+r-2}\semidirect W$ on $\Omega_d$ is given, and
$\H$ is the locus of points with nontrivial $\Wtilde$-stabilizer.

Suppose $\frac{1}{p}+\frac{1}{q}+\frac{1}{r}<1$ and consider the
singularity \eqref{eq-cusp-Tpqr}.  One can work out a minimal
resolution, and the exceptional divisor turns out to be a union of
rational curves.  These are smooth and meet each other transversely to
form a cycle, except when there is only one component, when it meets
itself transversely at one point.  The negated self-intersection numbers
$c_0,\dots,c_{m-1}$ of these curves are given in
table~\ref{tab-cusp-singularities}
(cf. \cite[(1.3)]{Nakamura-Inoue-surfaces}).  The notation $2^n$ means
a string $2,\dots,2$ of $n$ many $2$'s.  From these one constructs
what Nakamura \cite{Nakamura-Inoue-surfaces} calls the first cycle
numbers $\Zykel(T_{p,q,r})$, for which we write
$(c_0',\dots,c_{m-1}')$.  They are the same as $(c_0,\dots,c_m)$
except when there is only one component, in which case $c_0'=c_0+2$.
Then one works out the ``dual cycle numbers'' $\Zykel^*(T_{p,q,r})$ by
an algorithm due to Hirzebruch and Zagier
\cite[p.~311]{Looijenga-rational-surfaces} and Nakamura
\cite{Nakamura-Inoue-surfaces}, for which we write
$d_0',\dots,d_{s-1}'$.  Then we define $d:=(d_0,\dots,d_{s-1})$ to be
the same sequence, except when there is only one term, when we set
$d_0=d_0'-2$.  (\cite{Looijenga-rational-surfaces} omits mention
of the special treatment of the one-component case.)

One can construct a remarkable surface from these data, a (singular)
hyperbolic Inoue surface
\cite{Inoue}\cite[p.~307]{Looijenga-rational-surfaces}.  It is a
normal but non-algebraic complex surface with two singularities.  One
is a $T_{p,q,r}$ singularity and the other is its ``dual cusp'', the
exceptional divisor $D$ of whose minimal resolution is a cycle of $s$
rational curves with negated self-intersection numbers
$d=(d_0,\dots,d_{s-1})$.  We let $Y$ be the surface obtained by
minimally resolving this dual cusp.  Looijenga explains
\cite[III(2.7)]{Looijenga-rational-surfaces} that a universal
deformation $\Y\to S$ of $Y$ preserving $D$, restricted to the
$T_{p,q,r}$ singularity, gives a semiuniversal deformation of it.
Write $Y_t$ for the fiber over $t\in S$ and $\D\sset S$ for the
discriminant locus $\{t\in S\mid\hbox{$Y_t$ is singular}\}$.  So our
goal is to show the asphericity of $S-\D$.  We will also need to fuss
over the difference between $S-\D$ and $(S,s_0)-(\D,s_0)$.

Looijenga works with a larger space $S_f$, the set of $t\in S$ for
which $Y_t$ has only ADE singularities.  He proves
\cite[III(2.8iii)]{Looijenga-rational-surfaces} that for every $t\in
S_f$, $Y_t$ is a rational surface.  Each such $Y_t$ comes equipped
with a copy $D_t$ of $D$ in it.  In the last paragraph of
\cite[II(3.6)]{Looijenga-rational-surfaces} he defines a complex
manifold $M_d$ and a subset $D_d$, and in
\cite[II(3.7)]{Looijenga-rational-surfaces} he constructs a family of
pairs (rational surface, divisor in it) over $M_d$.  In
\cite[II(3.10)]{Looijenga-rational-surfaces} he describes the singular
fibers of this family; in particular the discriminant is exactly
$D_d$.  Using \cite[II(3.8 and 3.10)]{Looijenga-rational-surfaces} he
shows there is a unique holomorphic map $\Phi_f:S_f\to M_d$ such that
$\Y|_{S_f}$ is the pullback of the family over $M_d$ (up to a possible
minor alteration which in the end doesn't occur; see
\cite[p.~316]{Looijenga-rational-surfaces}).  With $\Phi_f$ in hand,
he extends it \cite[III(3.5)]{Looijenga-rational-surfaces} to a
holomorphic map $\Phi$ from $S$ to a certain completion $\Mhat_d$ of
$M_d$, and shows that this is an isomorphism
\cite[III(3.6)]{Looijenga-rational-surfaces}.  As a consequence
$\Phi_f$ is an isomorphism, so $S-\D\iso M_d-D_d$.  So we want to
prove that $M_d-D_d$ is aspherical.

Here are the definitions of $M_d$ and $D_d$.  Following
\cite[II(3.3)]{Looijenga-rational-surfaces}, let $Y^0$ be a fixed
rational surface with an anticanonical divisor $D^0$ which is a cycle
of type $d$.  Following \cite[I\S2]{Looijenga-rational-surfaces}, let
$Q$ be the subspace of $H_2(Y^0;\Z)$ orthogonal to every component of
$D$, and $B$ a certain explicit basis of $Q$.  One follows Looijenga's
recipe for $B$ and checks that in every case that it is a root system
whose Dynkin diagram $Y_{p,q,r}$ we saw in \eqref{eq-Y-p-q-r-graph}.  It follows
that $Q$ has signature $(1,p+q+r-3)$.  Following
\cite[I\S3]{Looijenga-rational-surfaces} let $W$ be the Weyl group of
$B$ and $I\sset Q\tensor\R$ its Tits cone.  Following
\cite[p.~1]{Looijenga-root-systems}, define $I^\circ$ as the interior
of $I$ and
$$
\Omega_d:=\bigl\{x+iy\in Q\tensor\C\bigm|y\in I^\circ\bigr\}.
$$ (Note: this coincides with \cite[p.~16]{Looijenga-root-systems} for
$X$ the empty subset of $B$.  Also, Looijenga writes $\Omega_d$ in
\cite{Looijenga-rational-surfaces} and $\Omega$ in
\cite{Looijenga-root-systems}.)  Now, $Q$ acts on $\Omega_d$ by
translations in the real directions and $W$ acts by the complexification of
its action on $Q$.  Then $M_d:=\Omega_d/\Wtilde$ where
$\Wtilde:=Q\semidirect W$, and $D_d$ is defined as the image of the
locus $\H\sset\Omega_d$ of points with nontrivial
$\Wtilde$-stabilizer.  Since $M_d-D_d$ has $\Omega_d-\H$ as an
unramified cover, what remains to prove is the asphericity of
$\Omega_d-\H$.

(Properly speaking, $\Omega_d$ is defined in
\cite[II(3.6)]{Looijenga-rational-surfaces} and some unwinding
of definitions in \cite[II(3.2)]{Looijenga-rational-surfaces}
is required to obtain the description above.  The key
points are that $Q$ is nondegenerate, so that $I$ can be identified
with the dual Tits cone $J$, and that choosing a point of the ``affine
lattice'' $A$ identifies it with $Q$.)

Having described $\Omega_d$ we can now address the difference between
$S$ and its germ $(S,s_0)$.  It turns out that for any $W$-invariant
neighborhood $V\sset I$ of $0$, there is a neighborhood $U_V$ of
$s_0\in S=\Mhat_d$ whose preimage in $\Omega_d$ is
$\Utilde_V=\{x+iy\in\Omega_d\mid y\in V\}$.  Furthermore, such $U_V$
give a basis for the topology of $\Mhat_d$ at $s_0$.  (Refer to
\cite[(2.18)]{Looijenga-root-systems} and use the fact that $\{s_0\}$ is the
stratum of $\Mhat_d$ corresponding to the full $Y_{p,q,r}$ diagram,
regarded as a subdiagram of itself.)  Obviously we may restrict to those $V$
that are starshaped around $0$.  For such $V$,
$\Utilde_V-\H\to\Omega_d-\H$ is a homotopy-equivalence.  Being
$\Wtilde$-equivariant, it descends to a homotopy-equivalence
$U_V-\D\to S-\D$.  Therefore the asphericity of $(S,s_0)-(\D,s_0)$ is
equivalent to that of $S-\D$, which we have already reduced to the
asphericity of $\Omega_d-\H$.

At this point the algebraic geometry vanishes into the background,
because $\Omega_d$ and $\H$ are described in terms of $Y_{p,q,r}$.  By
\cite[(2.17)]{Looijenga-root-systems}, point stabilizers in $\Omega_d$
are finite and generated by reflections of $\Wtilde$, so $\H$
is locally modeled on finite Coxeter arrangements.  An obstruction to
applying theorem~\ref{thm-riem-mfld-br-cover-globally-CATx} is the
absence of a nonpositively curved metric on $\Omega_d$.  We can remedy
this as follows.   $W$ is a hyperbolic reflection
group, and $I^\circ$ contains one of the two cones of positive-norm
vectors in $Q\tensor\R$, say $I'^\circ$.  Now,
$$
\Omega_d':=\bigl\{x+iy\in Q\tensor\C \bigm| y\in I'^\circ\bigr\}
\sset \Omega_d
$$ \emph{does} admit a complete nonpositively curved metric.  In the
notation of section~\ref{sec-Coxeter-arrangements}, it is a guise of the symmetric space
$P\Omega\bigl(Q\oplus\bigl(\begin{smallmatrix}0&1\\1&0\end{smallmatrix}\bigr)\bigr)$ for
$\O(2,p+q+r-2)$.  So
corollary~\ref{cor-assuming-conjecture-locally-Coxeter-implies-cover-contractible}
(which assumes conjecture~\ref{conj-Coxeter-arrangement-implies-CAT0})
says that $\Omega_d'-\H$ is aspherical.  To finish the proof we
observe that $\Omega_d'-\H\to\Omega_d-\H$ is a homotopy-equivalence.
To see this, find a $W$-equivariant deformation retraction of
$I^\circ$ into $I'^\circ$ and apply it to the imaginary parts of
points of $\Omega_d$, leaving their real parts fixed.  This is a
$\Wtilde$-equivariant deformation retraction of $\Omega_d$ into
$\Omega_d'$, so $\Omega_d-\H$ and $\Omega_d'-\H$ are
homotopy-equivalent.
\end{proof}

\begin{remark}
In the context of \cite{Looijenga-rational-surfaces} a ``cusp
singularity'' means one whose minimal resolution has exceptional
divisor a cycle of rational curves.  If the cusp is smoothable and the
dual cusp has${}\leq 5$ components, then Looijenga obtained a similar description
of $S$ and $\D$.  Our retraction-to-$\Omega_d'$ trick always works
because the Picard group of $Y^0$ (hence $Q$) still has hyperbolic signature.  Referring to \cite[p.~307]{Looijenga-rational-surfaces} we
see that conjecture~\ref{conj-Coxeter-arrangement-implies-CAT0} implies the asphericity of the discriminant
complement for any 2-dimensional smoothable cusp singularity whose embedding dimension
is${}\leq5$.  Recently Gross, Hacking and Keel \cite{GHK} have
generalized part of \cite{Looijenga-rational-surfaces}, so our methods
may apply even more generally.
\end{remark}

\end{document}